\newcommand{\tsk}[1]{\textcolor{YellowOrange}}
\def\@endtheorem{\endtrivlist}
\newtheorem{teo}{Theorem}[section]
\newtheorem{defin}[teo]{Definition}
\newtheorem{prop}[teo]{Proposition}
\newtheorem{cor}[teo]{Corollary}
\newtheorem{lemma}[teo]{Lemma}
\theoremstyle{definition}
\newtheorem{remark}[teo]{Remark}
\newtheoremstyle{dico}
{\baselineskip}   
{\topsep}   
{}  
{0pt}       
{} 
{.}         
{5pt plus 1pt minus 1pt} 
{}          
\theoremstyle{dico}
\newtheorem{say}[teo]{}
\numberwithin{equation}{section}
\newcommand{\ra}{\rightarrow}
\newcommand{\C}{\mathbb{C}}
\newcommand{\Zeta}{{\mathbb{Z}}}
\newcommand{\meno}{^{-1}}
\newcommand{\alfa}{\alpha}
\newcommand{\alf}{\alpha}
\newcommand{\vacuo}{\emptyset}
\newcommand{\la}{\lambda}
\newcommand{\restr}[1]          {\vert_{#1}}
\newcommand{\Aut}{\operatorname{Aut}}
\newcommand{\Out}{\operatorname{Out}}
\renewcommand{\setminus}{-}
\newcommand{\eps}{\varepsilon}
\renewcommand{\phi}{\varphi}
\newcommand{\lds}{\ldots}
\newcommand{\cds}{\cdots}
\newcommand{\cd}{\cdot}
\newcommand{\im}{\operatorname{Im}}
\newcommand{\sx}{\langle}
\newcommand{\xs}{\rangle}
\newcommand{\lra}{\longrightarrow}
\newcommand{\tra}{\twoheadrightarrow}
\newcommand{\ga}{\gamma}
\newcommand{\Ga}{\Gamma}
\newcommand{\id}{\operatorname{id}}
\newcommand{\PP}{\mathbb{P}}   
\renewcommand{\phi}             {\varphi}
\newcommand{\M}{\mathsf{M}}
\newcommand{\A}{\mathsf{A}}
\newcommand{\PGL}                {\operatorname {PGL}}
\renewcommand{\Im}              {\operatorname{Im}}
\newcommand{\mihi}[1]{}
\newcommand{\barx}{{ \bar{x}}}
\newcommand{\Tri}{\operatorname {T}}
\newcommand{\ove}[1]{\overline{#1}}
\def\LaTeX{%
	\let\Begin\begin
	\let\salta\relax
	\let\finqui\relax
	\let\futuro\relax}
\def\Edef{\end{definition}}
\def\Elem{\end{lemma}}
\def\Edim{\end{proof}}
\def\Eprop{\end{prop}}
\def\Ecor{\end{corollary}}
\def\Ethm{\end{theorem}}
\def\Erem{\end{remark}}
\newcommand{\puno}{\mathbb{P}^1}
\newcommand{\confpn}{\operatorname{\mathbf{F}}_{0,n}\mathbb{P}^1}
\newcommand{\inn}{\operatorname{inn}}
\newcommand{\Inn}{\operatorname{Inn}}
\newcommand{\baralf}{\bar{\alf}}
\newcommand{\auta}{\Aut^* \Ga_n}
\newcommand{\aug}{\Aut G}
\newcommand{\outa}{\operatorname{Out}^*\Ga_n}
\newcommand{\data}{\mathscr{D}}
\newcommand{\datan}{\mathscr{D}^n}
\newcommand{\ts}{\tilde{\sigma}}
\newcommand{\Datum}{\theta}
\newcommand{\cc}{\mathscr{C}}
\newcommand{\base}{{\mathscr{B}}}
\newcommand{\bbase}{{\bar {\base}}}
\newcommand{\baseb}{{\overline{\mathscr{B}}}}
\newcommand{\basef}{{\mathsf{Y}}}
\newcommand{\Mod}{\operatorname{Mod}}
\newcommand{\PMod}{\operatorname{PMod}}
\newcommand{\relo}{\,  \operatorname{rel}\, \{0,1\}}
\newcommand{\omo}{\simeq}
\newcommand{\dire}{\varepsilon}  
\newcommand{\tdire}{\tilde{\dire}}
\newcommand{\morf}{f} 
\newcommand{\morp}{\phi}   
\newcommand{\morff}{f'} 
\newcommand{\talfa}{\tilde{\alfa}}
\newcommand{\cxx}{\C^{**}}
\newcommand{\ccxx}{\conf_{0,n-3}\cxx}
\newcommand{\mon}{\M_{0,n}}
\newcommand{\monu}{\M_{0,n+1}}
\newcommand{\conf}{\operatorname{\mathbf{F}}}
\newcommand{\confn}{\conf_{0,n}}
\newcommand{\topo}{\mathscr{T}}
\newcommand{\autss}{\Aut^{**}}
\newcommand{\coll}{{\mathcal{I}}}
\newcommand{\collf}{\coll(*,f)}
\newcommand{\kolf}{\mathfrak{K}}
\newcommand{\burp}{\psi}
\newcommand{\ax}{a}
\newcommand{\basa}{\basef_\ax}
\newcommand{\yy}{y}
\newcommand{\ccs}{\cc^*}
\newcommand{\uu}{u}
\newcommand{\tv}{\bar{v}}
\newcommand{\ttv}{\tilde{v}}
\newcommand{\bara}{{\bar{a}}}
\newcommand{\barb}{{\bar{b}}}
\newcommand{\z}{{z}}
\newcommand{\w}{{w}}
\begin{document}
\author{Alessandro Ghigi, Carolina Tamborini}
\title[Families of Galois covers of the line]{A topological construction of families of Galois covers of the line}

\keywords{Configuration spaces, moduli spaces of curves, coverings, braid groups, mapping class groups}


\address{Universit\`{a} di Pavia}
\email{alessandro.ghigi@unipv.it}
\address{Universiteit Utrecht}

\email{c.tamborini@uu.nl} 
\subjclass[2020]{32G15, 20F36, 32J25,57K20}
%
%
%
%

\begin{abstract}
We describe a new construction of families of Galois coverings of the line using basic properties of configuration
spaces, covering theory, and the Grauert-Remmert Extension Theorem. Our construction provides an alternative to a previous construction due to Gonz\'alez-D\'{i}ez and Harvey (which uses Teichm\"uller theory and Fuchsian groups) and, in the case the Galois group is non-abelian, corrects an inaccuracy therein.  In the opposite case where the Galois group has trivial center, we recover some results due to  Fried and V\"olklein. 
\end{abstract}

\maketitle
\tableofcontents

\section{Introduction}

The object of this note are families of Galois coverings of the line.

Let $G$ be a finite group and let $C$ and $C'$ be smooth projective
curves over the complex numbers endowed with a $G$-action. We say that
$C$ and $C'$ are \emph{topologically equivalent} or have the same
(unmarked) \emph{topological type} if there are $\eta \in \Aut G$ and
an orientation preserving homeomorphism $f: C \ra C'$ such that
$f(g\cd x) = \eta (g) \cd f(x)$ for $x\in C'$ and $g\in G$. We say
that $C$ and $C'$ are (unmarkedly) $G$-\emph{isomorphic} if moreover
$f$ is a biholomorphism.

Given a $G$-covering $C\ra \puno$, it has been proved by
Gonz\'alez-D\'{i}ez and Harvey \cite{ganzdiez} that there exists an
algebraic family of curves with a $G$-action
\begin{gather*}
\pi: \cc \ra \mathsf{B},
\end{gather*}
such that \begin{enumerate}
\item every curve $C'$ in the family is \emph{topologically
equivalent} to $C$;
\item every curve with an action of the given topological type is
$G$-\emph{isomorphic} to some fiber of the family and at most to a
finite number of fibers.
\end{enumerate}
This result has been subsequently used in several papers,
e.g. \cite{cgp,fgp,fn,penego,perroni}, just to mention a few.

The construction in \cite{ganzdiez} uses Teichm\"uller theory.  Other
approaches to this construction include \cite{friedv,volk,li}.  In
this paper we describe an alternative, explicit and mostly topological
construction of such families. We expect this to be useful to make
explicit computations on the family. E.g., we expect this to allow a
better understanding of the monodromy and the generic Hodge group for
the natural variation of Hodge structure associated with the family,
generalizing the results of \cite{Rohde} carried out in the cyclic
case.  Our motivation comes from the fact that these families and
their variation of the Hodge structure are important in the study of
Shimura subvarieties of the moduli space $\A_g$ (of principally
polarized abelian varieties of dimension $g$) in relation with the
Coleman-Oort conjecture (see e.g. \cite{moonen-special,
moonen-oort,fgp,t}). The results presented here are, nevertheless,
of indipendent interest.

\begin{say}
We give a quick glance at our construction.  For $n\geq 3$ let
$ \M_{0,n}$ denote the set of $n$-tuples
$X=(x_1, \lds, x_n) \in (\PP^1)^n$ such that $x_i \neq x_j $ for
$ i\neq j$ and $ x_{n-2}= 0, x_{n-1} = 1,x_n = \infty$. Consider the
group
$\Gamma_n=\sx \gamma_1,...,\gamma_n|\ \gamma_1 \cds \ga_n=1\xs$.
Let $G$ be a finite group and let $\theta: \Gamma_n\ra G$ be an
epimorphism.  Fix $X\in \M_{0,n}$.  After choosing a base point
$x_0 \in \PP^1 -X$ and an isomorphism
$\chi:\Ga_n \cong \pi_1(\PP^1-X, x_0)$, Riemann's existence theorem
yields a $G$-covering $C_X \ra \puno$ with monodromy
$\theta\circ \chi\meno$ and branch locus $X$. Nevertheless this
covering depends on the choices. Our goal is to make this
construction for all $X\in \mon$ together, in order to get a family
of curves parametrized by $\mon$. Consider the map
\begin{gather*}
p: \M_{0,n+1} \ra \M_{0,n}, \quad p(x_0,
x_1,...,x_n)=(x_1,...,x_n).
\end{gather*}
We have $p^{-1}(X)=\mathbb{P}^1\setminus X$.  Hence $p$ can be
thought as the universal family of genus $0$ curves with $n$ marked
points.  The basic idea of our construction is that the total space
of our family should be a suitable $G$-covering of $\M_{0,n+1}$.
For the construction of this covering, choose
\begin{enumerate}
\item[i)] an element $x=(x_0, X)\in \M_{0,n+1}$;
\item[ii)] an isomorphism $\chi: \Gamma_n \ra \pi_1(\puno-X,
x_0)$. 
\end{enumerate}
The following sequence is exact and splits:
\begin{gather*}
1\ra \pi_1(\mathbb{P}^1\setminus X, x_0) \ra \pi_1(\M_{0,n+1},
x)\ra \pi_1(\M_{0,n}, X)\ra 1.
\end{gather*}
Set for simplicity $N_x: = \pi_1(\mathbb{P}^1\setminus X, x_0)$,
$H_X:= \pi_1(\M_{0,n}, X)$
and $f:=\chi^{-1}\circ \theta$.  Assume that we can find an
extension $\tilde{f}$:
\begin{equation*}
\begin{tikzcd}
1 \arrow[r, ]{}& \pi_1(\mathbb{P}^1\setminus X, x_0) \arrow[dr,
"f"] \arrow[r, ]{} & \pi_1(\M_{0,n+1}, x)\arrow[r, ]{} \arrow[d,
"\tilde{f}",dashed ]{} & H_X
\arrow[r, ]{} & 1.\\
& & G & 
\end{tikzcd}
\end{equation*}
From $\tilde{f}$ we get a topological $G$-covering
$ \cc^* \ra \M_{0,n+1} $. By Grauert-Remmert Extension Theorem (see
Theorem \ref{grr} below) this compactifies to a branched covering
$ \cc \ra \puno \times \M_{0,n}$ of quasi-projective varieties.
Composing with the projection to $\M_{0,n}$ we get a holomorphic
family $ \pi:\cc\ra \M_{0,n}$ satisfying property (1) and (2).
\end{say}

\begin{say}
Thus, if one is able to find the extension $\tilde{f}$, one can
construct the families using only basic properties of configuration
spaces, covering theory and the Grauert-Remmert Extension Theorem,
avoiding Teichm\"uller theory and Fuchsian groups.  In fact this
strategy is not new, as it has already been used in exactly the same
context in various papers by Michael D. Fried and Helmut V\"olklein,
see e.g.  \cite {fried77,friedv,volk}.

If $G$ is abelian, one is always able to find the extension
$\tilde{f}$, see Section \ref{sec:abelian}.  In general however the
extension $ \tilde{f}$ does not exist, contrary to what claimed in
\cite{ganzdiez}. One can at least show that there are always finite
index subgroups $H_a\subset H_X$ such that $f$ extends to a morphism
$f_a:N_x\rtimes H_a\ra G $.  Geometrically passing from $H_X$ to the
subgroup $H_a$ means that one builds a family satisfying (1) and (2)
over a base which is not any more $\M_{0,n}$, but some finite cover
$\basef_{\ax}$ of it.  The pair $(H_a,f_a)$ is far from unique,
there are many of them and different choices yield families
differing by finite \'etale pull-back (see Section
\ref{sec:famG-curve} for precise definitions.)  So another problem
arises: how is one supposed to choose the pair $(H_a,f_a)$ in order
to determine the family in a canonical way?

For a special class of groups, namely for groups $G$ with trivial
center, there is a canonical choice of $(H_a,f_a)$, which allows to
construct a canonical family of coverings. This case corresponds to
the one studied in \cite{friedv,volk,volkl} where the condition that
$G$ be centerless plays a crucial role.

It is odd that for this problem the two special cases occur in
opposite directions, namely for abelian and for centerless groups.

For general $G$ one is not able to pick out a
distinguished choice in a canonical way. This problem was already considered long ago in  \cite[p. 57-58]{fried77}
where a  cohomological interpretation  of this difficulty is given.

Our approach instead is the following. Since we are stuck with a whole
collection of pairs $(H_a,f_a)$, each one giving rise to a family of
coverings with base the cover $\basef_\ax$ of $\mon$, we decide to
consider the whole collection instead of the single families.  This
collection comes naturally with the structure of a directed set coming
from the pull-backs among families.  We are able to show that this
collection with this structure is well-defined and depends only on the
topological data.

Summing up, our construction, which builds heavily on previous
approaches, corrects an inaccuracy in \cite{ganzdiez}, where it is
erroneously claimed that one has always $\basef_a=\mon$, confirms
that $\basef_a=\mon$ if $G$ is abelian (Theorem \ref{casoabeliano}),
and allows to recover at least part of the results in the papers of
Fried and V\"olklein quoted above, while generalizing them to
arbitrary groups with nontrivial center.	
\end{say}

\begin{say}
The paper is organized as follows. In Section
\ref{sec:configuration-spaces} we recall basic facts about the
configuration spaces of $\PP^1$. Section \ref{sec:par-transp} deals
with parallel transport for fiber bundles. This material is for sure
known to the experts, but rather hard to locate in the
literature. Since these arguments are quite useful and we like their
geometric flavour, we prefer to expound them concisely. In Section
\ref{sec:geometric-bases} we recall some classical concepts of
surface topology. After these preliminaries in Section
\ref{sec:tipitopo} we study the set $\topo^n(G)$ of topological
types of $G$-actions: the main result is Theorem \ref{teotipitopo},
that gives a combinatorial description of the set of topological
types.  The proof of this well-known fact presents our ideas in a
simple context.  Section \ref{sec:groups} is dedicated to the
description of some technical tools for the construction of the
families. In Section \ref{sec:famG-curve} we contruct the collection
of families $\{\cc_{\ax} \ra \basef_{\ax}\}_a$ as sketched above.
In \ref {sec:indip-choices} we study the dependence of the
collection on the choices (i) and (ii), and on the epimorphism
$\theta: \Gamma_n\ra G$. Also this point becomes quite neat using
our approach. Section \ref{sec:centerless} is dedicated to the case where
$G$ has trivial center and Section \ref{sec:abelian} to case where
$G$ is abelian. Summing up our main Theorem is the following:
\end{say}
\begin{teo}
\label{mainz}\phantom{strudel}
\begin{enumerate}
\item The topological types of $G$-curves $C$ with $g(C)=g$,
$g(C/G)= 0$ and $n$ branch points are in bijection with the set
$\data^n(G) / \Aut G \times \outa$ (see Corollary \ref{outout} and
Definitions \ref{tipitopo}, \ref{sayautstar} and \eqref{eq:5} for
notation).
\item For any topological type there is a non-empty ordered set
$(\coll,\geq)$ and for any $a\in \coll$ there is an algebraic
family $\pi_a: \cc_a \ra \basef_a$ of genus $g$ curves with a
$G$-action. The following properties hold:
\begin{enumerate}
\item every curve $C$ in the family has the given topological
type;
\item for any $a\in \coll$ and for any $G$-curve $C$ with
$C/G\cong \PP^1$, there is at least one fibre of
$\pi_a: \cc_a \ra \basef_a$ which is $G$-isomorphic to $C$, and
there are only finitely many such fibres;
\item each $\basef_a$ is a finite \'etale cover of $\mon$;
\item $(\coll,\geq)$ is a directed set: for any $a,b \in \coll$
there is $c$ with $c\geq a$ and $ c \geq b$;
\item if $a\geq b$, there is an algebraic \'etale covering
$v:\basef_a \ra \basef_b $ such that $ \cc_a \cong v^* \cc _b$;
\item all the families have the same moduli image;
\item  if $Z(G)=\{1\}$, then $(\coll,\geq)$ has a minimum;
hence in this case we can associate to any topological type a
single family instead of the whole collection;
\item if $G$ is abelian, then there exists $a\in \coll$ such that
$Y_{a} = \mon$.
\end{enumerate}
\end{enumerate}
\end{teo}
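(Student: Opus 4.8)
The plan is to assemble the statement from the constructions in the body of the paper, handling part (1) and part (2) separately. For part (1) I would start from Riemann's existence theorem, which identifies a $G$-covering of $\puno$ with branch locus $X$ together with a chosen isomorphism $\chi\colon \Gamma_n \cong \pi_1(\puno - X, x_0)$ with an epimorphism $\theta\colon \Gamma_n \ra G$, i.e.\ with an element of $\datan(G)$. The topological type of the associated $G$-curve forgets both the identification of the deck group with $G$ and the choice of $\chi$: changing the former is an action of $\Aut G$, and changing the latter is an action through $\outa = \Out^*\Gamma_n$. Thus the topological types should be exactly the orbits of the $\Aut G \times \outa$-action on $\datan(G)$, which is the content of Theorem \ref{teotipitopo} and Corollary \ref{outout}; the verification that these two actions together capture topological equivalence and nothing more is the substance of Section \ref{sec:tipitopo}.

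For part (2) I would fix a topological type, choose a representative datum $\theta$, and run the construction sketched in the introduction. From the split exact sequence
\[
1 \ra N_x \ra \pi_1(\monu, x) \ra H_X \ra 1
\]
one wants to extend $f = \chi\meno \circ \theta \colon N_x \ra G$ over all of $\pi_1(\monu, x)$. Since such a $\tilde f$ need not exist, the key move is to pass to a finite-index subgroup $H_a \subset H_X$ over which $f$ extends to $f_a\colon N_x \rtimes H_a \ra G$; geometrically this replaces the base $\mon$ by the finite \'etale cover $\basef_a$ corresponding to $H_a \subset \pi_1(\mon, X) = H_X$, which is exactly (c). The map $f_a$ then defines a topological $G$-covering over the associated cover of $\monu$, which the Grauert--Remmert Extension Theorem (Theorem \ref{grr}) compactifies to $\cc_a \ra \puno \times \basef_a$; projecting gives $\pi_a\colon \cc_a \ra \basef_a$. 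Properties (a)--(b) then follow: each fibre is a $G$-cover of $\puno$ with the prescribed monodromy up to symmetries, hence of the given topological type (a), while Riemann existence yields surjectivity onto isomorphism classes with finite fibres (b).

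The genuinely new part, and where I expect the main obstacle, is organizing all admissible pairs $(H_a, f_a)$ into the directed set $(\coll, \geq)$. I would take $\coll$ to be the set of such pairs up to the natural equivalence, and set $a \geq b$ when $\basef_a$ dominates $\basef_b$ compatibly with $f_a, f_b$; then (d) and (e) --- directedness and the \'etale pull-back $\cc_a \cong v^*\cc_b$ --- reduce to showing that any two of these covers are dominated by a common one and that the construction is functorial under pull-back. Property (f) holds because all the families realize the single fixed topological type, so their images in moduli coincide. The hardest conceptual point is that $(\coll, \geq)$ must be shown well defined independently of the choices (i), (ii) and of the representative $\theta$; this is exactly the content of Section \ref{sec:indip-choices} and is what corrects the claim in \cite{ganzdiez} that one may always take $\basef_a = \mon$.

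Finally I would treat the two special cases separately. When $Z(G) = \{1\}$, a centerless $G$ admits a canonical minimal pair $(H_a, f_a)$ --- morally because $\Inn G \cong G$ rigidifies the extension problem --- producing a minimum of $(\coll, \geq)$ and hence a single canonical family, which is (g) and is carried out in Section \ref{sec:centerless}. When $G$ is abelian the obstruction to extending $f$ vanishes, so $\tilde f$ exists outright and one may take $\basef_a = \mon$, giving (h) via Theorem \ref{casoabeliano}. I expect the abelian case to be routine once the extension is produced, whereas the centerless case requires the specific rigidity of centerless groups; the well-definedness and directedness in the general case remain the principal difficulty.
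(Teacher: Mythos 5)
Your proposal is correct and follows essentially the same route as the paper, which proves Theorem \ref{mainz} by assembling Theorem \ref{teotipitopo}/Corollary \ref{outout} for part (1), Theorem \ref{costruzione} for the construction and properties (c)--(e), Theorem \ref{famtipotopo} for (a)--(b) and hence (f), and Theorems \ref{casocenterless} and \ref{casoabeliano} for (g) and (h). The only point you assert rather than argue is the non-emptiness of $\coll$, i.e.\ that a finite-index $H_a$ admitting an extension $f_a$ always exists; in the paper this is Lemma \ref{grupplemma}, which uses that a finitely generated group has only finitely many subgroups of a given finite index to produce a finite-index $H'\subset H_X$ over which $f$ extends trivially.
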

The precise statement can be found in Theorems \ref{costruzione} and
\ref{maindep}.  Roughly speaking one can say that for any topological
type there is a ``universal'' family of $G$-curves with that
topological type. Such a family is not unique, but only unique up to
the equivalence relation generate by finite \'etale pull-backs.

\begin{say}
The existence problem that we address in this paper can of course
be generalized: instead of considering just Galois covers, one can
ask for the construction of families satisfying (1) and (2) for
all the coverings with a fixed Galois closure (equivalently with
fixed monodromy). This kind of problems have been studied a lot
and they are extremely important also because of their relevance
for the inverse Galois problem, see
\cite{fried77,friedv,friedann,friedjarden,fried10,volkl}.  In
these cases it often happens that the ``universal'' family has
more than one component. We stress that in this paper we restrict
only to the Galois case and that in this case all families are
connected. Infact the base of each family is a (connected) cover
of $\mon$.

Another variant of the problem studied in this paper is obtained
by letting $G^*$ be a group such that
$\Inn G \subset G^* \subset \Aut G$ and considering equivalent two
data if and only if they belong to the same
$G^*\times \outa$-orbit. This also has attracted a lot of
attention in the literature. Our case corresponds to the choice
$G^*=\Aut G$. In this paper we restrict to this case since we are
interested in the topological types.

\end{say}

\medskip

{\bfseries \noindent{Acknowledgements}}.  The authors would like to
thank Michael D. Fried, Gabino Gonz\'alez-D\'iez and Fabio Perroni for
useful discussions/emails related to the subject of this paper.  We
are very grateful to the anonymous referee for several interesting
questions, in particular for pushing us to study the case of a
centerless group, see Section \ref{sec:centerless}. The authors were partially supported by MIUR PRIN 2017
``Moduli spaces and Lie Theory'', by MIUR, Programma Dipartimenti
di Eccellenza (2018-2022) - Dipartimento di Matematica
``F. Casorati'', Universit\`a degli Studi di Pavia and by INdAM
(GNSAGA). The second author was partially supported by the Dutch Research Council (NWO grant BM.000230.1).

\section{Configuration spaces}
\label{sec:configuration-spaces}

\begin{say}
If $M$ is a manifold, its configuration space is
\begin{gather*}
\confn M := \{ (x_1, \lds, x_n ) \in M^n: x_i \neq x_j \text{ for
} i\neq j\}.
\end{gather*}
We use the following notation: $X = (x_1, \lds, x_n)$ is a point of
$ \conf_{0,n}M$ and $x =(x_0, X)= (x_0,x_1, \lds, x_n) $ is a point
of $ \conf_{0,n+1}M $.  We set $M-X: =M- \{x_1\cds, x_n\}$.  The
group $\pi_1 (\confn M)$ is called the \emph{pure braid group} with
$n$ strings of the manifold $M$.

\end{say}

\begin{say}
\label{omegan}
If $n\geq 3$, the group $\PGL(2,\C)$ acts freely and holomorphically
on $\conf_{0,n}\PP^1$.  The quotient $ \confpn / \PGL(2,\C) $ is the
moduli space of smooth curves of genus $0$ with $n$ marked
points. Set $ \cxx:=\C\setminus\{0,1\}$.  The map
\begin{gather*}
\conf _{0,n-3} \cxx \lra \confpn, \quad (z_1, \lds, z_{n-3} )
\mapsto (z_1, \lds, z_{n-3},0, 1, \infty)
\end{gather*}
is a section for the action of $\PGL(2,\C)$, i.e. its image
intersects each orbit in exactly one point and it induces a
biholomorphism of $\ccxx $ onto the moduli space
$ \confpn / \PGL(2,\C)$.  We \emph{define} $\mon$ as the image of
the section i.e. we set
\begin{gather*}
\M_{0,n}: =  \ccxx \times   \{(0,1,\infty) \}   = \\
=\{X=(x_1, \lds, x_n) \in \confpn : x_{n-2}= 0, x_{n-1} = 1,x_n =
\infty\}.
\end{gather*}
Points of $\mon$ will be denoted as
$X= (x_1, \lds, x_n)$ with the understanding that
$x_{n-2}= 0 , x_{n-1} = 1, x_n = \infty$.  Similarly we set
\begin{gather*}
\M_{0,n+1}: = \{x=(x_0, \lds, x_{n+1}) \in \conf _{0,n+1} \PP^1:
x_{n-2}= 0, x_{n-1} = 1,x_n = \infty\}.
\end{gather*}
It is often useful to compare the configuration space of $\PP^1$
with that of the plane. Denote by $\Tri(2, \C)\subset \PGL(2, \C)$
the subset of elements in $\PGL(2, \C)$ fixing $\infty$. The group
$\Tri(2, \C)$ acts on $\conf_{0,n-1}\C$ and the map
\begin{gather}\label{biolom}
\mon \lra \conf_{0,n-1}\C, \quad X \mapsto (x_1, \lds, x_{n-3},0,
1)
\end{gather}
is a section for this action, hence
$\M_{0,n}\times \Tri(2, \C)\cong\conf_{0,n-1}\C$. In particular,
$\pi_1(\M_{0,n})\subset \pi_1(\conf_{0,n-1}\C)$. Thus, when dealing
with $\pi_1(\M_{0,n})$, we can work with the more classical braid
group of the plane.  The map
\begin{gather}
\label{eq:1}
p: \monu \ra \mon, \quad p(x_0, X ) := X ,
\end{gather}
is a fiber bundle.
In fact it is the restriction of the bundle $\conf_{0,n}\C \ra \conf_{0,n-1}\C$, see \cite{birman}.
The fibre  over $X $ is
$\PP^1 - X = \cxx - \{x_1, \cds, x_{n-3} \} $.
Hence \eqref{eq:1} is
the universal family of genus $0$ curves with $n$ ordered marked
points.
\end{say}

\begin{say}\label{succesioniesatte}
Fix $x=(x_0, X)\in \M_{0,n+1}$ and let
$\tilde x=(x_0, \tilde X)\in \conf_{0,n}\C$ be the corresponding
point via \eqref{biolom}.
We have a commutative diagram
\begin{equation}
\label{seqpuni}
\begin{tikzcd}[row sep=small, column sep=small]
1 \arrow[r, ""] &\pi_1(\puno-X, x_0) \arrow[r, ""]\arrow[d, equal]&\pi_1( \M_{0,n+1}, x) \arrow[r, ""] \arrow[d, hook]&\pi_1(	\M_{0,n}, X) \arrow[r, ""]\arrow[d, hook]& 1 \\
1\arrow[r, ""] & \pi_1(\C \setminus \tilde X, x_0) \arrow[r,
""] & \pi_1(\conf_{0,n}\C,\tilde x)\arrow[r, ""] &
\pi_1(\conf_{n-1}\C, \tilde X)\arrow[r, ""] & 1.
\end{tikzcd}
\end{equation}

The rows are the split exact sequence of the fibrations $p$ and
$\conf_{0,n}\C \ra \conf_{0,n-1}\C$, see e.g. \cite[Corollary
1.8.1]{birman} and \cite[Theorem 3.1]{fadell}.  A geometric way to
exhibit the splitting is to produce a cross section as follows: given
$x = (x_1, \lds, x_n) \in \mon$ we set
\begin{gather*}
f(x) : = \frac{1}{2} \min \{ 1, | x_1 |, \lds , |x_{ n-3}|\}.
\end{gather*}
Then $s (x) := ( f(x), x_1, \lds, x_n )$ is a section of
$p: \monu \ra \mon$.  (A similar idea is used in
\cite[Thm. 3.1]{fadell}.)  The morphism
$s_*: \pi_1(\M_{0,n}, X)\ra \pi_1(\M_{0,n+1}, x)$ is a splitting.
Setting
\begin{equation}\label{epsilon}
\begin{split}
&\varepsilon: \pi_1(\M_{0,n}, X)\ra \Aut(\pi_1(\puno-X, x_0)) \\
&\varepsilon([\alpha])([\gamma]):=s_*[\alpha] \cdot [\gamma]\cdot
s_*[\alpha]^{-1}= [s \circ \alpha \ast \gamma \ast s\circ
i(\alpha)].
\end{split}
\end{equation}
we get
\begin{gather*}
\pi_1(\M_{0,n+1}, x) = \pi_1(\puno-X, x_0)) \rtimes_\eps \pi_1(\mon, X).
\end{gather*}
\end{say}

\section{Parallel transport}\label{sec:par-transp}
In this section we recall a notion of parallel transport up to
homotopy on any fiber bundle. In the sequel, we will use it for the
bundle $p:\M_{0,n+1}\ra \M_{0,n}$ to study the dependence of the
construction of Section \ref {sec:famG-curve} from the choices made.

\begin{say}
Given $b_0, b_1 \in B$ let $\Omega(B,b_0,b_1)$ denote the set of all
paths $\alfa$ in $B$ with $\alfa(0)=b_0$ and $\alfa(1)=b_1$.  We
write $\alfa \sim \beta$ if $\alfa \omo \beta \relo$.  Let
$\Pi_1(B)$ denote the fundamental groupoid of $B$: this is the small
category whose objects are the points of $B$ and with morphisms from
$b_0$ to $b_1$ equal to $\Omega(B,b_0,b_1) / \sim$, composition
being given by $[\alfa]\cd [\beta] = [\alfa * \beta]$.
\end{say}

\begin{say} \label{Htilde} Let $p: E \ra B$ be a fiber bundle (in the
sense of \cite[p.90]{spanier} i.e. a locally trivial bundle). Assume
that the base $B$ is Hausdorff and paracompact. Then $p$ is a
fibration \cite[Cor. 14, p. 96]{spanier} i.e. it has the homotopy
lifting property for every topological space Z: if
$H: Z \times [0,1] \ra B$ is any map and $f : Z \ra E$ lifts
$H(\cd, 0)$, then there is a lifting $\tilde{H}$ of $H$ with
$\tilde{H}(\cd, 0) = f$ (see e.g.  \cite[p. 66]{spanier}). For any
fiber bundle $p: E \ra B$ one can define a sort of parallel
transport up to homotopy, which is a contravariant functor $T$ from
$\Pi_1(B)$ to the homotopy category of topological spaces, denoted
by $\mathrm{h-TOP}$. For $b\in B$ set $T(b):= E_b =
p\meno(b)$. Given $[\alfa] \in \Pi_1 (B) (b_0, b_1) $ consider the
map $H: E_{b_0} \times [0,1]\ra B, H(e,t):=\alfa(t)$. The inclusion
$i: E_{b_0} \hookrightarrow E$ is a lifting of $ H(\cd, 0)$. By the
homotopy lifting property there is
$\tilde{H}: E_{b_0} \times [0,1] \ra E$ with $p\tilde{H} = H$ and
$\tilde{H}(\cd, 0) = i$.  Moreover the homotopy class of
$\tilde{H}(\cd , 1)$ is well-defined. 
We call
$T([\alfa]) = [ \tilde {H} ( \cd, 1)]\in [ E_{b_0}, E_{b_1}]$ the
homotopy parallel transport along $\alfa$, see e.g.
\cite[p. 100f]{spanier} and \cite[p. 54]{may}.
\end{say}
\begin{say}
If $p: E \ra B$ is a differentiable fiber bundle one can say more.
Recall the following basic fact from differential topology.  Let $M$
and $N$ be smooth manifolds. An \emph{isotopy} of $M$ in $N$ is a
smooth map $f: M \times [0,1] \ra N$ such that $f(\cd, t)$ is an
embedding for any $t$.  If $M=N$, $f(\cd, t)$ is a diffeomorphism of
$M$ for any $t$ and $f(\cd, 0) = \id_M$, we say that $f$ is a
\emph{ambient isotopy}.
\end{say}
\begin{teo} \label{Hirsch} If $M$ is a compact submanifold of $N$, any
isotopy $f: M \times [0,1] \ra N$ such that $f(\cd , 0)$ is the
inclusion $M \hookrightarrow N$ extends to an ambient isotopy.
\end{teo}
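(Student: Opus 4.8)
The plan is to realize the ambient isotopy as the flow of a time-dependent vector field on $N$ whose velocity along the moving submanifold matches the given isotopy.

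First I would pass to the \emph{track} of the isotopy: set $I=[0,1]$ and define $F: M\times I \ra N\times I$ by $F(x,t)=(f(x,t),t)$. Since each $f(\cd,t)$ is an embedding and $M$ is compact, $F$ is an injective immersion from a compact manifold, hence an embedding; moreover it carries $M\times\{0,1\}$ into $N\times\{0,1\}$, so its image is a neat compact submanifold of $N\times I$. Along $F$ the assignment $\eta(F(x,t)) := \bigl(\depart[f](x,t),\ \campcoo{t}\bigr)\in T_{f(x,t)}N\oplus T_t I$ is well-defined, because $F$ is injective, and it is exactly the pushforward $F_*\campcoo{t}$, hence tangent to the track.

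Next I would extend $\eta$ to a global vector field on $N\times I$ of the special form $\xi = V\oplus\campcoo{t}$, where $V(\cd,t)$ is a vector field on $N$; such a $\xi$ has unit $I$-component, so its flow advances time at unit speed. To do this I would invoke the tubular neighborhood theorem, in its version for neat submanifolds, using compactness of the track, to extend the $N$-component of $\eta$ to a neighborhood of $F(M\times I)$; then I would multiply by a bump function equal to $1$ near the track and supported in that neighborhood, setting $V\equiv 0$ outside. The resulting $V$ is a time-dependent vector field on $N$ with support in a compact set, so $\xi$ is complete and its flow has the form $(y,t)\mapsto(\Phi_t(y),t)$ with $\Phi_0=\id_N$ and each $\Phi_t$ a diffeomorphism of $N$. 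Because $\xi$ is tangent to the track and restricts there to $F_*\campcoo{t}$, the integral curve issuing from $(x,0)=F(x,0)$ is $t\mapsto F(x,t)=(f(x,t),t)$, so $\Phi_t(x)=f(x,t)$ for all $x\in M$; thus $\bar f(y,t):=\Phi_t(y)$ is the desired ambient isotopy.

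The hard part is the extension step: one must secure a tubular neighborhood of the track despite the boundary of $M\times I$ and the non-compactness of $N$, and then arrange completeness of $\xi$. Compactness of $M$ is precisely what makes both work — it yields a compact track admitting a neat tubular neighborhood, and it lets one cut $V$ off to compact support, guaranteeing that the flow exists for the whole interval $t\in[0,1]$.
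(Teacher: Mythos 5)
Your proof is correct and is essentially the standard argument of the source the paper cites for this statement (the paper gives no proof of its own, only the reference to Hirsch, Thm.~1.3, p.~180): pass to the track of the isotopy, extend the velocity field $F_*\campcoo{t}$ to a compactly supported time-dependent vector field via a tubular neighborhood and a bump function, and integrate. No gaps; the points you flag (neatness of the track, compact support to guarantee the flow exists up to time $1$) are exactly the ones that need care, and compactness of $M$ handles both as you say.
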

(See e.g \cite[Thm. 1.3 p. 180]{hirsch}.)

\begin{lemma}
\label{lemmatrasporto}
Assume that $p : E \ra B$ is a differentiable bundle. Let $\alfa $
be a path in $B$ from $b_0$ to $ b_1$. Let $\sigma $ be a path in
$E$ with $p \sigma = \alfa$ and set $x_0 = \sigma(0) \in E_{b_0}$,
$x'_0 = \sigma(1)\in E_{b_1}$.  Then there is a map
$\tilde{H} : E_{b_0} \times [0,1] \ra E$ such that
\begin{enumerate}
\item $\tilde{H} (\cd ,0) $ is the inclusion
$E_{b_0} \hookrightarrow E$;
\item $\tilde{H}(\cd, t)$ is a diffeomorphism of $E_{b_0}$ onto
$E_{\alfa(t)}$;
\item $ \tilde{H}(x_0 , t) = \sigma(t)$.
\end{enumerate}
In particular the map $f^{\alpha}:= \tilde{H}(\cd, 1)$ is a
diffeomorphism of $E_{b_0}$ onto $E_{b_1}$ such that $f^\alf(x_0) = x_0'$
and $T([\alfa]) = [f^{\alpha}]$.  Moreover if $G$ is a finite group
acting fibrewise on $E$ and the fibre is compact, then $f^\alfa$ can
be chosen $G$-equivariant.
\end{lemma}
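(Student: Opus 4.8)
The plan is to produce the fibrewise diffeomorphism in two stages: first a ``preliminary'' trivialization of $E$ along $\alpha$, which already yields (1) and (2), and then a correction inside the single fibre $E_{b_0}$ that forces it to track $\sigma$, which is exactly where Theorem \ref{Hirsch} enters. Throughout I would assume $\alpha$ and $\sigma$ smooth (one may smooth them rel endpoints without affecting the homotopy class relevant for $T$).

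First I would pull the bundle back along $\alpha$ to get a smooth fibre bundle $\alpha^*E\to[0,1]$. Since any bundle over an interval is trivial — cover $[0,1]$ by finitely many subintervals over which $E$ pulls back to a product and glue the trivializations, a construction valid for an arbitrary (possibly noncompact) fibre and requiring no completeness hypothesis — I obtain a smooth family of diffeomorphisms $G_t\colon E_{b_0}\to E_{\alpha(t)}$, and after post-composing with $G_0^{-1}$ I may assume $G_0$ is the inclusion $E_{b_0}\hookrightarrow E$. The map $(e,t)\mapsto G_t(e)$ already satisfies (1) and (2), but in general $G_t(x_0)\neq\sigma(t)$, so (3) fails.

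To enforce (3) I would transport $\sigma$ back into the base fibre: set $s(t):=G_t^{-1}(\sigma(t))\in E_{b_0}$, a smooth path with $s(0)=x_0$. Viewing $\{x_0\}$ as a compact $0$-dimensional submanifold of $N=E_{b_0}$ and $t\mapsto s(t)$ as an isotopy of it starting from the inclusion, Theorem \ref{Hirsch} extends it to an ambient isotopy $\phi_t$ of $E_{b_0}$ with $\phi_0=\id$ and $\phi_t(x_0)=s(t)$. I then define $\tilde H(e,t):=G_t(\phi_t(e))$. Property (2) holds since $\tilde H(\cdot,t)=G_t\circ\phi_t$ is a composite of diffeomorphisms $E_{b_0}\to E_{b_0}\to E_{\alpha(t)}$; property (1) holds since $\tilde H(\cdot,0)=G_0\circ\phi_0$ is the inclusion; and property (3) holds because $\tilde H(x_0,t)=G_t(s(t))=\sigma(t)$. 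Finally $p\tilde H(e,t)=\alpha(t)$ exhibits $\tilde H$ as a lift of the homotopy $H(e,t)=\alpha(t)$ with $\tilde H(\cdot,0)$ the inclusion, so by the definition of $T$ in \ref{Htilde} we get $T([\alpha])=[f^{\alpha}]$, while $f^{\alpha}(x_0)=\sigma(1)=x_0'$.

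The hard part will be the $G$-equivariant refinement. For the transport $G_t$ I would average an Ehresmann connection over the finite group to obtain a $G$-invariant one; since $G$ acts trivially on the base, the horizontal lift of $\dot\alpha$ is $G$-invariant, and — this is where compactness of the fibre is essential — the lift lives on the compact manifold $\alpha^*E$, hence is complete, so its flow defines parallel transport maps $G_t$ that are genuine $G$-equivariant diffeomorphisms. The true obstacle is to make the correction $\phi_t$ equivariant while still achieving $\phi_t(x_0)=s(t)$: this is possible only if $\mathrm{Stab}(x_0)=\mathrm{Stab}(x_0')$, i.e. if the orbit type is constant along $\sigma$, which holds in the intended application, where the tracked point is unramified and $G$ acts freely there. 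Under such an assumption I would perform the correction on the free locus: project the path $s$ to the manifold quotient $E_{b_0}/G$, apply Theorem \ref{Hirsch} downstairs to get an isotopy supported in a compact part of the free locus, and lift it through the covering $E_{b_0}\to E_{b_0}/G$ (starting at $\id$, through $x_0$) to a $G$-equivariant ambient isotopy $\phi_t$ with $\phi_t(x_0)=s(t)$; then $f^{\alpha}=G_1\circ\phi_1$ is equivariant, and in full generality one would instead invoke an equivariant isotopy extension theorem, the orbit-type condition being the genuine crux.
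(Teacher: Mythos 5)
Your construction of $\tilde H$ is essentially identical to the paper's: trivialize $\alpha^*E$ over the contractible base $[0,1]$ (the paper cites Steenrod for this, you glue local trivializations, which amounts to the same thing), transport $\sigma$ back to the path $s(t)=G_t^{-1}(\sigma(t))$ in $E_{b_0}$, and correct the trivialization by the ambient isotopy supplied by Theorem \ref{Hirsch}; the verification of $T([\alpha])=[f^{\alpha}]$ via the definition in \ref{Htilde} is also the same. The only divergence is the equivariant addendum: the paper handles it by the Ehresmann argument alone --- integrate a $G$-invariant lift of $d/dt$ on the compact pull-back $\alpha^*E$ --- and does not ask the equivariant $f^{\alpha}$ to satisfy condition (3) for the prescribed $\sigma$ (in its only application, Theorem \ref{famtipotopo}, all that is needed is a $G$-equivariant diffeomorphism between fibres). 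Your stabilizer/orbit-type discussion is therefore aimed at a stronger statement than the one claimed; it is a legitimate concern if one insists on combining equivariance with (3), but it is not needed here, and as written it only resolves that stronger statement under an extra hypothesis not present in the lemma.
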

\begin{proof}
If the fibre of $E$ is compact, the argument is the usual proof of
Ehresmann theorem: pull-back $E$ to $[0,1]$, choose a lifting to $E$
of the vector field $d/dt$ and integrate it, see
e.g. \cite{voisin}. A $G$-invariant lift gives the last
statement. But we also need the case of non-compact fibres. This can
be treated as follows.  Denote by $\talfa: \alfa^*E \ra E$ the
bundle map covering $\alfa$.  Since $[0,1]$ is contractible, there
is a (smooth) trivialization
$\psi: E_{b_0} \times [0,1] \ra \alfa^*E$ such that
$\psi (x,0) = x$, see \cite[Cor. 11.6 p. 53]{steenrod}.  Given any
such $\psi$ the composition
$\talfa \circ \psi :E_{b_0} \times [0,1] \ra E$ is a possible choice
for the map $\tilde{H} $ in \ref{Htilde}. We now modify $\psi$ so
that it matches the conditions (a)-(c). First notice that if
$\{h_t\}_{t\in [0,1]}$ is any path in
$\operatorname{Diff} (E_{b_0})$ starting at the the identity, then
$\psi'_t:= \psi_t h_t$ is a new trivialization of $\alfa^*E$.  Next
observe that $t\mapsto \psi_t^{-1}(\sigma(t))$ is a path in
$E_{b_0}$ from $x_0$ to $\psi_1\meno (x'_0)$, i.e.  an isotopy of
$\{x_0\}$ in $E_{b_0}$. By Theorem \ref{Hirsch} there is $\{h_t\}$
that extends this isotopy. Then $\psi'_t:= \psi_t h_t$ is a
trivialization and $\tilde{H}:= \talfa \circ \psi'$ satisfies
(a)-(c).
\end{proof}


We now use this construction for the fiber bundle
$\M_{0,n+1} \ra \M_{0,n}$ and give a geometric intepretation of the
morphism \eqref{epsilon} in terms of parallel transport.

\begin{prop}\label{sigmacancellet}
Let $x,x'\in \M_{0,n+1}$. Let $\beta:[0,1]\ra \M_{0,n}$ be a path
such that $\beta(0)=X$ and $\beta(1)=X'$. Let $\tilde H$,
$f^{\beta}$ and $T([\beta])$ be as in Lemma \ref{lemmatrasporto}.
Assume that $f^{\beta}(x_0)=x'_0$. Set
$\tilde \beta(t):=\tilde H(t,x_0)$. Then for
$[\gamma]\in \pi_1(\puno-X, x_0)$ we have
$f^{\beta}_*([\gamma])=\tilde{\beta}_{\#}([\gamma])$.
\end{prop}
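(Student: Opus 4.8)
The plan is to deduce the identity from a single homotopy of a square obtained by restricting $\tilde H$ to a representing loop of $[\gamma]$. First I would pick a loop $\gamma\colon [0,1]\ra \puno-X$ based at $x_0$, viewed as a loop in the fibre $E_X=p\meno(X)=\puno-X$, and form the map
\[
F\colon [0,1]\times[0,1]\ra \M_{0,n+1},\qquad F(s,t):=\tilde H(\gamma(s),t).
\]
Using the three properties of $\tilde H$ from Lemma \ref{lemmatrasporto} I would identify the four edges of this square. Property (1) gives the bottom edge $F(s,0)=\gamma(s)$; since $f^\beta=\tilde H(\cd,1)$, the top edge is $F(s,1)=f^\beta\comp\gamma(s)$, which by property (2) is a loop in $E_{X'}$ and is based at $x_0'=f^\beta(x_0)$; and by property (3), together with $\gamma(0)=\gamma(1)=x_0$, both vertical edges equal $F(0,t)=F(1,t)=\tilde H(x_0,t)=\tilde\beta(t)$.

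Next I would invoke the standard fact that a continuous map of the square induces a rel-endpoint homotopy between the two edge-paths joining opposite corners. Reading off the corners $F(0,0)=F(1,0)=x_0$ and $F(0,1)=F(1,1)=x_0'$, this yields
\[
\gamma\ast\tilde\beta\;\omo\;\tilde\beta\ast(f^\beta\comp\gamma)\relo
\]
as paths in $\M_{0,n+1}$ from $x_0$ to $x_0'$. Cancelling $\tilde\beta$ on the left, i.e. pre-composing with the reverse path $\overline{\tilde\beta}$, gives $f^\beta\comp\gamma\omo\overline{\tilde\beta}\ast\gamma\ast\tilde\beta$. By the definition of the change-of-basepoint isomorphism $\tilde\beta_{\#}\colon\pi_1(\M_{0,n+1},x)\ra\pi_1(\M_{0,n+1},x')$ along the lift $\tilde\beta$ (in the concatenation convention of \eqref{epsilon}), the right-hand side represents $\tilde\beta_{\#}([\gamma])$, while the left-hand side is $f^\beta_*([\gamma])$. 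This is exactly the claimed equality, understood inside $\pi_1(\M_{0,n+1},x')$ after the inclusions of the fibre fundamental groups into $\pi_1(\M_{0,n+1},\cd)$ furnished by the top row of \eqref{seqpuni}.

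I expect the only genuinely delicate point to be the bookkeeping of orientations and conventions. One must fix the direction of $\tilde\beta$ (from $x$ to $x'$, which holds since $\tilde\beta(0)=\tilde H(x_0,0)=x$ and $\tilde\beta(1)=f^\beta(x_0)=x'$), the convention for $\ast$ and for $\tilde\beta_{\#}$, and the orientation of the square lemma, so that the final $\overline{\tilde\beta}\ast\gamma\ast\tilde\beta$ matches $\tilde\beta_{\#}$ rather than its inverse. The one substantive input beyond conventions is that $f^\beta\comp\gamma$ be a loop based at $x_0'$ and lying in the fibre $E_{X'}$: this is precisely where the hypothesis $f^\beta(x_0)=x_0'$ enters, combined with property (2) of Lemma \ref{lemmatrasporto} which guarantees $f^\beta(E_X)=E_{X'}$. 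Once these conventions are pinned down, the square-homotopy argument is forced and requires no further computation.
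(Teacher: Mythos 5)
Your proof is correct and is essentially the paper's own argument: the paper also restricts $\tilde H$ to the square $F(s,t)=\tilde H(\gamma(s),t)$, reads off the same four edges, and concludes $i(\tilde\beta)\ast\gamma\ast\tilde\beta\simeq f^\beta\circ\gamma\ \operatorname{rel}\{0,1\}$, hence $f^\beta_*([\gamma])=\tilde\beta_\#([\gamma])$. Your extra care with the square-lemma orientation and with the inclusion of the fibre fundamental groups into $\pi_1(\M_{0,n+1},\cdot)$ only makes explicit what the paper leaves implicit.
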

\begin{proof}
Take $[\gamma]\in \pi_1(\puno-X, x_0)$. Consider the map
\begin{gather*}
F: [0,1]\times [0,1] \ra \monu\quad F(t,s)=\tilde H(\gamma(s), t).
\end{gather*}
Then $F(0,s)=\tilde H(\gamma(s),0)=\gamma(s)$,
$F(0,1)=\tilde H(\gamma(s), 1)=f^{\beta}\circ \gamma(s)$ and
$F(t,0)=F(t,1)=\tilde H(x_0,t)=\tilde\beta(t)$.  It follows that
$ i(\tilde{\beta}) \ast \gamma \ast \tilde{\beta} \simeq f^\beta
\circ \gamma\ \operatorname{rel} \{0,1\}$.  Hence
$f^{\beta}_*([\gamma])=\tilde{\beta}_{\#}([\gamma])$ for any
$[\gamma]\in \pi_1(\puno-X, x_0)$.
\end{proof}

\begin{prop}\label{esp-parall} Let $[\alpha]\in\pi_1(\M_{0,n}, X)$
and let $\tilde H$, $f^{\alpha}$ and $T([\alpha])$ be as in Lemma
\ref{lemmatrasporto}.  Assume that
$\sigma(t):=\tilde H(t,x_0)=s\circ \alpha$. Then
$\varepsilon([\alpha])=f^{\alpha}_*$.
\end{prop}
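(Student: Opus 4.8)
The plan is to obtain this as a direct specialization of Proposition \ref{sigmacancellet} to the case in which the path in the base is the loop $\alpha$ and the chosen lift to $\monu$ is the section-path $s\circ\alpha$. First I would put the hypotheses into the form required by that proposition. Since $\alpha$ is a loop at $X$, the parallel transport $f^\alpha$ is a self-diffeomorphism of the fibre $\puno-X$. The path $s\circ\alpha$ is a smooth lift of $\alpha$ (because $p\circ s=\id$), and the standing assumption $\tilde H(t,x_0)=s\circ\alpha$ forces $(s\circ\alpha)(0)=x_0$, i.e. $x=s(X)$; hence $s\circ\alpha$ is a loop based at $x$ and, in particular, $f^\alpha(x_0)=(s\circ\alpha)(1)=x_0$, so the endpoint hypothesis of Proposition \ref{sigmacancellet} holds automatically. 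Thus, in the notation of that proposition, $\beta=\alpha$ and $\tilde\beta=\sigma=s\circ\alpha$.

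Next I would simply invoke Proposition \ref{sigmacancellet}. It yields, for every $[\gamma]\in\pi_1(\puno-X,x_0)$, the identity $f^\alpha_*([\gamma])=\tilde\beta_{\#}([\gamma])$, where $\tilde\beta_{\#}$ denotes change of basepoint along $\tilde\beta=s\circ\alpha$. Because $\tilde\beta$ is now a loop in $\monu$ at $x$, the map $\tilde\beta_{\#}$ is exactly conjugation by the class $[\tilde\beta]=[s\circ\alpha]=s_*[\alpha]$ inside $\pi_1(\monu,x)$, that is, $[\gamma]\mapsto s_*[\alpha]\cdot[\gamma]\cdot s_*[\alpha]^{-1}$. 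This is precisely the defining formula \eqref{epsilon} for $\varepsilon([\alpha])$, so $f^\alpha_*=\varepsilon([\alpha])$ as automorphisms of $N_x=\pi_1(\puno-X,x_0)$, which is the assertion.

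The only genuinely delicate step, and the one I would carry out with care, is the bookkeeping of the concatenation conventions. Substituting $\tilde\beta=s\circ\alpha$ into the homotopy $i(\tilde\beta)\ast\gamma\ast\tilde\beta\simeq f^\alpha\circ\gamma$ from the proof of Proposition \ref{sigmacancellet} produces a conjugation by $[s\circ\alpha]$, and one must check that it is written in the same order as in \eqref{epsilon}, where $\varepsilon([\alpha])([\gamma])=[s\circ\alpha\ast\gamma\ast s\circ i(\alpha)]$. This amounts to identifying the reversed path $i(s\circ\alpha)$ with $s\circ i(\alpha)$ and verifying which factor comes first, so that the conventions for $\ast$, for the reversal $i$, and for the splitting $s_*$ are all aligned. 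Once this is done the two expressions coincide verbatim and no further computation is needed; everything else is a mechanical translation of Proposition \ref{sigmacancellet}.
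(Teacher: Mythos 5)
Your proposal is correct and is essentially the paper's own proof: the authors likewise specialize Proposition \ref{sigmacancellet} to $\beta=\alpha$, $\tilde\beta=\sigma=s\circ\alpha$, obtaining $f^{\alpha}_*[\gamma]=\sigma_{\#}([\gamma])=[s\circ\alpha\ast\gamma\ast s\circ i(\alpha)]=\varepsilon([\alpha])([\gamma])$. Your extra remarks on verifying $f^{\alpha}(x_0)=x_0$ and on aligning the concatenation/reversal conventions are sound and, if anything, slightly more careful than the paper's two-line argument.
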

\begin{proof}
By Proposition \ref{sigmacancellet}, we get
$f^{\alpha}_*[\gamma]=\sigma_{\#}([\gamma])=[\sigma \ast \gamma \ast
i(\sigma)]$ for any $[\gamma]\in \pi_1(\puno-X, x_0)$. 
Hence $f^{\alpha}$ satisfies
$f^{\alpha}_*[\gamma]=[s \circ \alpha \ast \gamma \ast s\circ
i(\alpha)]=\varepsilon([\alpha])([\gamma])$ for every
$[\gamma]\in \pi_1(\puno-X, x_0)$. This proves the
statement.
\end{proof}

\section{Dehn-Nielsen theorems and consequences
}\label{sec:geometric-bases}

We dedicate this section to fixing some notations and recalling some
classical concepts of surface topology.

\begin{say} \label{sayhatalfa} Let $\Sigma$ be an oriented surface and
set $\Sigma^*:=\Sigma - \{y\}$ for some $y\in \Sigma$.  Given
$b_0, b_1 \in \Sigma$ let $\Omega(\Sigma,b_0,b_1)$ denote the set of
all paths $\alfa$ in $\Sigma$ with $\alfa(0)=b_0$ and
$\alfa(1)=b_1$. Fix $x_0\in \Sigma^*$.  Let
$\tilde\alfa \in \Omega (\Sigma, x_0,y)$ be such that
$\tilde\alfa(t) = y$ only for $t=1$ and let $D$ be a small disk
around $y$.  Let ${\alfa} $ be the loop that starts at $x_0$,
travels along $\tilde\alfa$ till it reaches $\partial D$, then makes
a complete tour of $\partial D$ counterclockwise and finally goes
back to $x_0$ again along $\tilde\alfa$.  An important observation
is that the conjugacy class of $[{\alfa}]$ in $\pi_1 (\Sigma^*,x_0)$
is well defined. Indeed the choice of the disk does not change
$[{\alfa}]$, while if a different path
$\tilde\beta\in \Omega(\Sigma, x_0,y)$ is chosen, then $[{\beta}]$
and $[{\alfa}]$ are conjugate by the class of a loop in $\Sigma^*$
that starts at $x_0$ travels along $\tilde\alfa$ up to $\partial D$,
then along a piece of $\partial D$ and finally goes back along
$\tilde\beta$.
\end{say}

\begin{say}
Fix a point $(x_0, X) \in \conf_{0,n}S^2$.  Consider a smooth
regular arc $\tilde{\alpha}_i$ joining $x_0$ to $x_{\sigma_i}$ (for
some permutation $\sigma$). Assume that the paths $\tilde{\alfa}_i$
intersect only at $x_0$ and that the tangent vectors at $x_0$ are
all distinct and follow each other in counterclockwise order (we
orient $S^2$ by the outer normal).  Now consider the loops $\alfa_i$
constructed as in \ref{sayhatalfa} and assume that the circles are
pairwise disjoint and that the intersection of the interior of the
$i$-th circle with $X$ reduces to $x_{\sigma_i}$.
\end{say}
\begin{defin}
\label{geometric-basis}
Let $x=(x_0, X)\in \conf_{0,n+1}S^2$. We call a set of generators
$\base=\{[\alfa_1], \lds, [\alfa_n]\}$ obtained as above a
\emph{{geometric basis}} of $\pi_1(S^2 -X,x_0)$. We say that a
geometric basis $\base = \{[\alfa_i]\}_{i=1}^n$ is \emph{{adapted}}
to $x$ if it respects the order of the points in $X$, that is,
$\alpha_i$ turns around $x_i$, i.e., the permutation $\sigma=id$.
\end{defin}

Notice that, thanks to the permutation, the definition of geometric
basis depends only on the set $\{x_1, \lds , x_n\}$, not on the
ordering of the points. On the other hand the classes $\{[\alfa_i]\}$
have a fixed order.

\begin{say}\label{bbbendef}
For $n \geq 3$, set
$\Ga_{n}:= \langle \gamma _1, \dots, \gamma _n \ \vert \ \prod
_{i=1} ^n \gamma _i = 1 \rangle.$ From a geometric basis
$\mathscr{B} = \{[\alfa_i]\}_{i=1}^n$ we get an isomorphism
\begin{gather*}
\chi: \Ga_n \ra \pi_1(S^2 - X, x_0)
\end{gather*} such that $\chi(\ga_i) = [\alfa_i]$.  Assume that
$\base = \{[\alfa_i]\}_{i=1}^n$ and $\baseb=\{[\baralf_i]\}_{i=1}^n$
are two geometric bases for $\pi_1(S^2 -X, x_0)$. It follows from
\ref{sayhatalfa} that every $[\alfa_i]$ is conjugate to some
$[\baralf_j]$.  If we denote by
$\chi,\overline{\chi}: \Ga_n \ra \pi_1(S^2 -X, x_0)$ the
isomorphisms corresponding to the two bases, then
$\mu:=\overline{\chi} \circ \chi\meno \in \Aut \pi_1(S^2 -X,x_0)$
has the following properties:
\begin{enumerate}
\item for every $i = 1, \lds, n$, $\mu([\alfa_i])$ is conjugate to
$[\alfa_j]$ for some $j$;
\item the induced homomorphism on $H^2(\pi_1(S^2-X,x_0), \Zeta)$ is
the identity.
\end{enumerate}
\end{say}

\begin{defin}
We denote by $\Aut^* \pi_1(S^2 -X,x_0)$ the subgroup of elements of
$\Aut \pi_1(S^2 -X,x_0)$ satisfying properties (1) and (2) above.
By \ref{bbbendef} this definition does not depend on the choice of
the geometric basis $\base$.
\end{defin}

\begin{say}
Now assume that $\base$ and $\baseb$ are adapted to $X$. In this
case, for every $i = 1, \lds, n$, $[\alfa_i]$ is conjugate to
$[\baralf_i]$. As a consequence, the automorphism
$\mu:=\overline{\chi} \circ \chi\meno$ of $\pi_1(S^2 -X,x_0)$
belongs to the subgroup $\Aut^{**}\pi_1(S^2 -X,x_0)$ defined as
follows

\end{say}

\begin{defin}
We denote by $\Aut^{**} \pi_1(S^2 -X,x_0)$ the subgroup of
$\Aut^* \pi_1(S^2 -X,x_0)$ of elements that map $[\alfa_i]$ to a
conjugate of $[\alfa_i]$ for every $i = 1, \lds, n$.  The definition
does not depend on the choice of the geometric basis $\base$ adapted
to $x$.
\end{defin}

\begin{defin} \label{sayautstar} Similarly, we denote by
$\auta \subset \Aut \Ga_n$ the subgroup of automorphisms $\nu$
satisfying:
\begin{enumerate}
\item for $i=1, \lds, n$ the element $\nu (\ga_i)$ is conjugate to
$\ga_j$ for some $j$;
\item the automorphism of $H^2(\Ga_n,\Zeta)$ induced by $\nu$ is the
identity.
\end{enumerate}
We denote by $\Aut^{**}\Gamma_n\subset \Aut^* \Ga_n$ the subgroup of
automorphisms $\nu$ such that

\begin{enumerate}
\item [(1')] for $i=1, \lds, n$ the element $\nu (\ga_i)$ is
conjugate to $\ga_i$.
\end{enumerate}
\end{defin}

If $\chi: \Ga_n \ra \pi_1(S^2 -X, x_0)$ is induced from
a geometric basis (not necessarily adapted to $x$), then
$\nu \in \auta $ (resp., $\Aut^{**}\Gamma_n $) if and only if
$\chi \nu \chi\meno \in \Aut^*\pi_1(S^2-X,x_0)$ (resp.
$\Aut^{**}\pi_1(S^2-X,x_0)$).

\begin{say}
\label{inner}
If $G$ is a group and $a\in G$, then $\inn_a: G \ra G$ denotes
conjugation by $a$: $\inn_a(x) = axa\meno$. Notice that if
$f : G \ra H$ is a morphism, then
$f \circ \inn_a = \inn_{f(a)}\circ f$.  The group of inner
automorphisms of $G$ is denoted $\Inn G$. It is a normal subgroup of
$\Aut G$. We set $\Out G:= \Aut G / \Inn G$. For
$(x_0,X)\in \conf_{0,n+1}S^2$, we
observe that $\Inn(\pi_1(S^2-X, x_0)) \subset
\Out^{**}(\pi_1(S^2-X, x_0))$ and
$\Inn \Ga_n \subset \Aut^{**}\Ga_n$ and 
we define
\begin{gather}
\begin{gathered}
\Out^*\pi_1(S^2-X, x_0):=\frac{\Aut^*\pi_1(S^2-X,
x_0)}{\Inn\, \pi_1(S^2-X, x_0)}  \\
\Out^{**}\pi_1(S^2-X, x_0):=\frac{\Aut^{**}\pi_1(S^2-X,
x_0)}{\Inn\, \pi_1(S^2-X, x_0)}
\end{gathered}
\qquad \label{eq:5}
\outa: = \frac{\auta}{\operatorname{Inn} \Ga_r}
\end{gather}
Using a geometric basis we immediately get
$\outa \cong \Out^*\pi_1(S^2-X, x_0)$.
\end{say}

\begin{say}If $S_{g,n}$ is a topological surface of genus $g$ with $n$
punctures, the mapping class group of $S_{g,n}$ is denoted by
$\Mod(S_{g,n})$, while $\PMod(S_{g,n})$ denotes the pure mapping
class group of $S_{g,n}$, which is defined to be the subgroup of
$\Mod(S_{g,n})$ of elements that fix each punctures individually.
\end{say}

\begin{say}In the sequel we will need the following variants of the
Dehn-Nielsen-Baer Theorem, for which see \cite[Thm. 8.8
p. 234]{fmarga}, \cite[Thm. 5.7.1 p. 197 
and Thm. 5.13.1 p. 214]{zieschang} 
and \cite[\S 2.9]{ivanov}.
\end{say}

\begin{teo}[Dehn-Nielsen-Baer]\label{dehnI}
Let $x=(x_0, X)\in \conf_{0,n+1}S^2$. Then
$\phi\in \Aut^*\pi_1(S^2-X, x_0)$ if and only if there exists
$\sigma \in \Inn \pi_1(S^2-X, x_0)$ and an orientation-preserving
homeomorphism $h: S^2-X\ra S^2-X$ such that $h(x_0)=x_0$ and
$\phi=\sigma \circ h_*$. In other words,
$\Mod(S^2-X)\cong \Out^*(\pi_1(S^2-X, x_0))$.
\end{teo}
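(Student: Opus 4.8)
The plan is to study the homomorphism
\[
\Psi \colon \Mod(S^2-X) \lra \Out(\pi_1(S^2-X, x_0)), \qquad [h] \mapsto [h_*],
\]
and to prove that its image is exactly $\Out^*(\pi_1(S^2-X,x_0))$ and that it is injective; the displayed ``if and only if'' is then just the unravelling of surjectivity onto $\Out^*$ together with the definition of $\Out^*$ as a quotient by $\Inn$. To see that $\Psi$ is well defined I would first note that, since $S^2-X$ is connected, any orientation-preserving homeomorphism $h$ can be isotoped so that $h(x_0)=x_0$ (the change-of-coordinates principle); then $h_*\in\Aut\pi_1(S^2-X,x_0)$, and a different choice of representative, or of the path identifying basepoints, changes $h_*$ only by an inner automorphism, so $[h_*]\in\Out$ depends only on $[h]$.

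The first substantive point is that $\Psi$ lands in $\Out^*$, which is exactly the forward implication $\phi=\sigma\circ h_*\Rightarrow\phi\in\Aut^*$. Here I would use the geometric-basis machinery of Section~\ref{sec:geometric-bases}: fix a geometric basis $\base=\{[\alfa_i]\}$ adapted to $x$, so $[\alfa_i]$ is represented by a small positively oriented loop around $x_i$. An orientation-preserving $h$ fixing $x_0$ extends to an orientation-preserving homeomorphism of $S^2$ permuting the punctures by some permutation, and since it preserves orientation it carries the positively oriented small loop around $x_i$ to a loop freely homotopic to the positively oriented small loop around the image puncture. Hence $h_*([\alfa_i])$ is conjugate to some $[\alfa_j]$ (a positive, not inverse, conjugate), which is property (1); property (2), recording the orientation behaviour on $H^2$, holds because $h$ preserves orientation. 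Thus $h_*\in\Aut^*$ and $\Psi$ factors through $\Out^*$.

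For injectivity I would argue as follows. Suppose $[h]\in\ker\Psi$, i.e. $h_*=\inn_g$ for some $g$; since $[h_*]$ is trivial in $\Out$, after adjusting the basepoint identification I may assume $h_*=\id$ on $\pi_1(S^2-X,x_0)$. Because $S^2-X$ is aspherical (it is homotopy equivalent to a wedge of circles for $n\geq 2$), a self-homeomorphism inducing the identity on $\pi_1$ is homotopic to the identity; and for surfaces homotopic homeomorphisms are isotopic (Baer--Epstein). Hence $[h]=1$, and $\Psi$ is injective.

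The main obstacle is \emph{surjectivity} onto $\Out^*$: given $\phi\in\Aut^*$, one must realize it, up to an inner automorphism, by an orientation-preserving homeomorphism. This is the hard, geometric half of Dehn--Nielsen--Baer, and it is precisely where I would invoke the quoted results \cite{fmarga,zieschang,ivanov}. Properties (1) and (2) are exactly the constraints isolating the geometric automorphisms: (1) forces $\phi$ to preserve the peripheral structure (the conjugacy classes of loops around the punctures) with orientation-compatible signs, while (2) pins down the global orientation. A concrete route to the realization is to show that $\Aut^*$ is generated by automorphisms each visibly induced by a homeomorphism of the punctured sphere --- half-twists interchanging adjacent punctures (realizing the permutation of the $x_i$) together with point-pushing and Dehn-twist automorphisms --- so that a product of these realizes an arbitrary $\phi$; this generation statement is the surface-topological content of the cited theorems specialized to genus $0$. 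Combining this surjectivity with the injectivity above yields the asserted isomorphism $\Mod(S^2-X)\cong\Out^*(\pi_1(S^2-X,x_0))$.
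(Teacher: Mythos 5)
Your outline is mathematically sound, but note that the paper does not prove this theorem at all: it is quoted as a classical result, with the entire content delegated to \cite{fmarga}, \cite{zieschang} and \cite{ivanov}. What you have written is essentially a reconstruction of the standard argument found in those sources: well-definedness of $[h]\mapsto[h_*]$, the easy inclusion of the image in $\Out^*$, injectivity via asphericity plus Baer--Epstein, and surjectivity as the hard geometric step. Two remarks. First, your treatment of property (2) is harmless but worth a second look: $\pi_1(S^2-X,x_0)\cong\Ga_n$ is a free group of rank $n-1$, so $H^2(\pi_1(S^2-X,x_0),\mathbb{Z})=0$ and condition (2) is vacuous here; the orientation constraint is in fact already encoded in condition (1), since an orientation-reversing homeomorphism sends $[\alfa_i]$ to a conjugate of $[\alfa_j]^{-1}$ rather than of $[\alfa_j]$, which for a geometric basis is a different conjugacy class. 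Second, your proposal is exactly as complete as the paper's citation and no more: the surjectivity of $\Mod(S^2-X)\ra\Out^*(\pi_1(S^2-X,x_0))$ --- realizing an abstract peripheral-structure-preserving automorphism by a homeomorphism, e.g.\ via Artin's characterization of the braid-like automorphisms of a free group --- is precisely the content of the cited theorems, and you correctly identify it as the step that cannot be made elementary. So there is no gap relative to what the paper does; you have simply unpacked the citation into its standard proof skeleton.
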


\begin{cor}\label{dehnII}
Let $x,y\in \conf_{0,n+1}S^2$ and
$\phi: \pi_1(S^2-X, x_0)\ra \pi_1(S^2-Y, y_0)$ be a homomorphism
that sends geometric bases to geometric bases. Then there exists
$\sigma\in \text{Inn}(\pi_1(S^2-Y, y_0))$ and an
orientation-preserving homeomorphism $h: S^2-X\ra S^2-Y$ such that
$h(x_0)=y_0$ and $\phi=\sigma \circ h_*$.
\end{cor}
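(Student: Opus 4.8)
The plan is to reduce the statement to the Dehn--Nielsen--Baer Theorem (Theorem \ref{dehnI}), which is precisely the special case $X=Y$, $x_0=y_0$. The idea is to fix an auxiliary orientation-preserving homeomorphism between the two punctured spheres and use it to transport $\phi$ into an automorphism of a single group lying in $\Aut^*$. First I would choose an orientation-preserving homeomorphism $h_0:S^2\ra S^2$ with $h_0(x_0)=y_0$ and $h_0(\{x_1,\lds,x_n\})=\{y_1,\lds,y_n\}$; such an $h_0$ exists because the group of orientation-preserving homeomorphisms of $S^2$ acts transitively on configurations of $n+1$ distinct points. Restricting, $h_0$ gives an orientation-preserving homeomorphism $S^2-X\ra S^2-Y$ with $h_0(x_0)=y_0$, hence an isomorphism $(h_0)_*:\pi_1(S^2-X,x_0)\ra\pi_1(S^2-Y,y_0)$. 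Being an orientation-preserving homeomorphism, both $h_0$ and $h_0\meno$ send geometric bases to geometric bases, since a geometric basis is defined purely in terms of arcs and counterclockwise loops around the punctures (see \ref{sayhatalfa} and Definition \ref{geometric-basis}).

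Next I would set $\psi:=(h_0)_*\meno\circ\phi$, an endomorphism of $\pi_1(S^2-X,x_0)$, and the key step is to show $\psi\in\Aut^*\pi_1(S^2-X,x_0)$. Pick a geometric basis $\base=\{[\alfa_i]\}$ of $\pi_1(S^2-X,x_0)$ whose image $\phi(\base)$ is a geometric basis of $\pi_1(S^2-Y,y_0)$. Applying $(h_0\meno)_*=(h_0)_*\meno$, which sends geometric bases to geometric bases, shows that $\psi(\base)$ is again a geometric basis of $\pi_1(S^2-X,x_0)$. Since $\base$ generates the group, $\psi$ is surjective, hence an automorphism because $\pi_1(S^2-X,x_0)\cong\Ga_n$ is a finitely generated free group and therefore Hopfian. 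Finally $\psi$ is exactly the change-of-basis automorphism between the two geometric bases $\base$ and $\psi(\base)$, so by \ref{bbbendef} it satisfies conditions (1) and (2), that is $\psi\in\Aut^*\pi_1(S^2-X,x_0)$.

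Now Theorem \ref{dehnI} applies to $\psi$: there are $\sigma'=\inn_a\in\Inn\pi_1(S^2-X,x_0)$ and an orientation-preserving homeomorphism $h_1:S^2-X\ra S^2-X$ with $h_1(x_0)=x_0$ and $\psi=\sigma'\circ(h_1)_*$. Using the identity $f\circ\inn_a=\inn_{f(a)}\circ f$ from \ref{inner}, we get
\begin{gather*}
\phi=(h_0)_*\circ\psi=(h_0)_*\circ\inn_a\circ(h_1)_*=\inn_{(h_0)_*(a)}\circ(h_0)_*\circ(h_1)_*.
\end{gather*}
Setting $h:=h_0\circ h_1$, an orientation-preserving homeomorphism $S^2-X\ra S^2-Y$ with $h(x_0)=y_0$, and $\sigma:=\inn_{(h_0)_*(a)}\in\Inn\pi_1(S^2-Y,y_0)$, we obtain $\phi=\sigma\circ h_*$, as required.

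The main obstacle is the middle step, the verification that $\psi\in\Aut^*$: it uses both that an orientation-preserving homeomorphism carries geometric bases to geometric bases and that the change-of-basis automorphism between two geometric bases automatically induces the identity on $H^2$ (condition (2)), both of which are furnished by \ref{bbbendef}. The remaining bookkeeping --- transporting the inner automorphism across $(h_0)_*$ and tracking base points --- is routine.
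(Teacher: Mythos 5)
Your proof is correct and follows essentially the same route as the paper: the paper's own argument is the one-line reduction ``fix an orientation-preserving homeomorphism $f\colon S^2-Y\to S^2-X$ with $f(y_0)=x_0$ and apply Theorem \ref{dehnI} to $f_*\circ\phi$'', which is exactly your $\psi=(h_0)_*^{-1}\circ\phi$ with $h_0=f^{-1}$. You simply spell out the details the paper leaves implicit (that $\psi$ lands in $\Aut^*$ via \ref{bbbendef}, that it is bijective since free groups are Hopfian, and the transport of the inner automorphism), all of which check out.
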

\begin{proof}
Fix an orientation preserving homeomorphism $f : S^2-Y \ra S^2-X$
such that $f(y_0) = x_0$ and apply the Dehn-Nielsen-Baer theorem to
$f_*\circ \phi$.
\end{proof}
\begin{cor}\label{dehnIII}
Let $x=(x_0, X)\in \conf_{0,n+1}S^2$. Then
$\phi\in \Aut^{**}\pi_1(S^2-X, x_0)$ if and only if there exists
$\sigma\in \text{Inn}(\pi_1(S^2-X, x_0))$ and an
orientation-preserving self-homeomorphism $h$ of $ S^2$ such that
$h(x_i)=x_i$ for $0\leq i \leq n$ and $\phi=\sigma \circ h_*$. In
other words, $\PMod(S^2-X)\cong \Out^{**} \pi_1(S^2-X, x_0)$.
\end{cor}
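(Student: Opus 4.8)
The plan is to deduce this from the Dehn-Nielsen-Baer theorem (Theorem \ref{dehnI}) by isolating the extra condition that cuts $\Aut^{**}$ out of $\Aut^*$. Throughout I fix a geometric basis $\base=\{[\alfa_i]\}_{i=1}^n$ adapted to $x$, so that each $[\alfa_i]$ is represented by a loop encircling $x_i$ and no other point of $X$, and I will use the standard fact that in $\pi_1(S^2-X,x_0)$ the peripheral classes $[\alfa_1],\dots,[\alfa_n]$ lie in pairwise distinct conjugacy classes.

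For the implication ($\Rightarrow$), suppose $\phi\in\Aut^{**}\pi_1(S^2-X,x_0)$. Since $\Aut^{**}\subset\Aut^*$, Theorem \ref{dehnI} furnishes $\sigma\in\Inn\pi_1(S^2-X,x_0)$ and an orientation-preserving homeomorphism $h:S^2-X\ra S^2-X$ with $h(x_0)=x_0$ and $\phi=\sigma\circ h_*$. A self-homeomorphism of the finitely punctured sphere extends across the punctures, so $h$ extends to an orientation-preserving self-homeomorphism $\bar h$ of $S^2$ inducing a permutation $\tau$ of $\{x_1,\dots,x_n\}$, with $\bar h(x_0)=x_0$. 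Because $\bar h$ carries a small loop around $x_i$ to a loop around $x_{\tau(i)}$, one has that $h_*([\alfa_i])$ is conjugate to $[\alfa_{\tau(i)}]$, whence $\phi([\alfa_i])=\sigma(h_*([\alfa_i]))$ is conjugate to $[\alfa_{\tau(i)}]$. On the other hand the hypothesis $\phi\in\Aut^{**}$ forces $\phi([\alfa_i])$ to be conjugate to $[\alfa_i]$. By the non-conjugacy of distinct peripheral classes this gives $\tau(i)=i$ for all $i$, so $\bar h$ fixes every $x_i$, $0\le i\le n$, which is the required self-homeomorphism.

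For ($\Leftarrow$), suppose $\phi=\sigma\circ h_*$ with $\sigma$ inner and $h$ an orientation-preserving self-homeomorphism of $S^2$ fixing each $x_i$, $0\le i\le n$. Restricting $h$ to $S^2-X$ and applying Theorem \ref{dehnI} already yields $\phi\in\Aut^*$. Since $h$ fixes $x_i$ and preserves orientation, it maps a counterclockwise loop around $x_i$ to a freely homotopic counterclockwise loop around $x_i$, so $h_*([\alfa_i])$ is conjugate to $[\alfa_i]$ (and not to $[\alfa_i]\meno$); conjugating further by $\sigma$ preserves the conjugacy class, so $\phi([\alfa_i])$ is conjugate to $[\alfa_i]$ and $\phi\in\Aut^{**}$. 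Finally the isomorphism $\PMod(S^2-X)\cong\Out^{**}\pi_1(S^2-X,x_0)$ follows by passing to the quotient: the equivalence just established identifies $\Aut^{**}$ with the classes realized by self-homeomorphisms of $S^2$ fixing each puncture individually, i.e.\ with $\PMod(S^2-X)$ once one factors out $\Inn\pi_1(S^2-X,x_0)$, which corresponds to the homeomorphisms isotopic to the identity. I expect the genuine content to sit in the first implication: namely pinning down that an orientation-preserving $\bar h$ sends $[\alfa_i]$ to a conjugate of $[\alfa_{\tau(i)}]$ with the correct orientation, together with the non-conjugacy of distinct peripheral classes; granting these facts, everything else is a direct translation through Theorem \ref{dehnI}.
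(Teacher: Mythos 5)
Your proposal is correct and follows essentially the same route as the paper: apply the Dehn--Nielsen--Baer theorem (Theorem \ref{dehnI}), extend $h$ across the punctures to a self-homeomorphism of $S^2$, and use the fact that the peripheral classes $[\alfa_1],\dots,[\alfa_n]$ are pairwise non-conjugate to force the induced permutation of $\{x_1,\dots,x_n\}$ to be the identity. The only difference is that you spell out the converse implication and the passage to $\PMod\cong\Out^{**}$ explicitly, which the paper leaves implicit.
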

\begin{proof}
Applying the Dehn-Nielsen-Baer theorem we get the homeomorphism $h $
of $S^2-X$ and $\sigma$. It is elementary that $h$ extends to a
homeomorphism of $S^2$.  Next assume $h(x_1) = x_j$ and fix a
geometric basis $\base=\{[\alfa_i]\}$ adapted to $x$. Here $\alfa_i$
is a loop at $x_0$ that makes a counterclockwise turn aroung $x_i$
as in \ref{sayhatalfa}. Hence $[h\alfa_1]$ is a loop making a turn
around $h(x_1) = x_j$.  But $[h\alfa_1]$ is conjugate to
$\sigma h_* ([\alfa_1]) = \phi ( [\alfa_1]) $ which is conjugate to
$[\alfa_1]$ since $\phi\in \autss \pi_1(S^2-X, x_0)$. Since
$\alfa_1$ makes a turn around $x_1$ it follows that
$h(x_1)=x_j=x_1$.  Similarly $h(x_i)=x_i$ for any $i$.
\end{proof}

\begin{say} 
We conlude this section interpreting some classical constructions in
the theory of braid groups using parallel transport.  We consider
the (pure version of the) generalized Birman exact sequence
associated with $\C^{**}=\puno-\{0,1,\infty\}$, see \cite[Thm. 9.1,
p. 245]{fmarga}.
\begin{gather}
\label{birman}
1\ra \pi_1(\M_{0,n}, X) \xrightarrow{Push}
\PMod(\puno-X) \xrightarrow{Forget}
\PMod(\C^{**})\ra 1.
\end{gather}
The map $Forget$ is the natural homeomorphism obtained by filling in
the puntures, i.e., it is the map induced by the inclusion
$\puno-X\hookrightarrow\C^{**}$.  The map $Push$ is defined as
follows (see \cite[Section 4.2.1]{fmarga}). Let
$\alpha=(\alpha_1,...,\alpha_{n}): [0,1]\ra \M_{0,n}$ be a pure
braid in $\puno$, with $\alpha(0)=\alpha(1)=X$. Thinking of $\alpha$
as an isotopy from $X$ to $X$ (sending each $x_i$ to $x_i$) we get
by Theorem \ref{Hirsch} that it can be extended to an isotopy of the
whole $\puno$. Denoting by $\Phi_{\alpha}$ the homeomorphism of
$\puno$ obtained at the end of the isotopy, we have that
$\Phi_{\alpha}(x_i)=\alpha_i(1)=x_i$, and thus $\Phi_{\alpha}$ can
be reguarded as an homeomorphism of $\puno-X$. Taking its isotopy
class we get $Push(\alpha)=[\Phi_{\alpha}]\in\PMod(\puno-X)$.  This
map is well defined, i.e., it does not depend on the choice of
$\alpha$ within its homotopy class nor on the choice of the isotopy
extension.
\end{say}

\begin{say}
It is useful to  reinterpret the morpohism $\eps$ defined in \eqref{epsilon} in this setting.
In particular we note
that 
$\Im\varepsilon \subset \Aut^{**}(\pi_1(\puno-X, x_0))$.
Fix $[\alf]\in \pi_1 (\mon, X)$.

Arguing as in  Proposition \ref{esp-parall}  note that
$f^{\alpha}$ extends to a homeomorphism $f^{\alpha}:\puno\ra \puno$
that fixes every $x_i$ individually.
Hence $[f^\alf] \in \PMod (\puno -X)$.
Since $\eps([\alf]) = f_*^\alf$
we have $\eps([\alf]) \in \Aut^{**}(\pi_1(\puno-X))$.

Let
$\tilde\varepsilon: \pi_1(\M_{0,n}, X)\ra \Out^{**}(\pi_1(\C^{**}-X,
x_0)) $ denote the composition of $\varepsilon$ with the natural
projection $\Aut^{**}\ra\Out^{**}$. Also, denote by
$F:\PMod(\C^{**}-X)\ra \Out^{**}(\pi_1(\C^{**}-X, x_0)) $ the
isomorphism $F:[h] \mapsto [h_*]$ coming from Corollary
\ref{dehnIII} of Dehn-Nielsen-Baer Theorem. The following
proposition is the analogue of Theorem 1.10 in \cite{birman} for
configurations of points in $\C^{**}$ (instead of $\C$).
\end{say}

\begin{prop}\label{commut} For
$[\alpha]\in \pi_1(\M_{0,n}, X)$, let $f^{\alpha}$ be the parallel
transport as in Lemma \ref{lemmatrasporto}. Then
$Push([\alpha])=[f^{\alpha}]$. Moreover, the following diagram
commutes
\begin{equation*}
\begin{tikzcd}[row sep=tiny]
&  \PMod(\puno-X)  \arrow[dd, "F"] \\
\pi_1(\M_{0,n}, X) \arrow[ur, "Push"] \arrow[dr, "\tilde\varepsilon"] & \\
& \Out^{**}(\pi_1(\puno-X, x_0))
\end{tikzcd}
\end{equation*}
\end{prop}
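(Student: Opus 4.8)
The plan is to prove the two assertions in turn, the crux being the first, which identifies the bundle-theoretic parallel transport $f^{\alpha}$ with the mapping class $Push([\alpha])$ built from an ambient isotopy of $\puno$. Once this is done, the commutativity of the triangle reduces to Proposition \ref{esp-parall} and the definition of $F$.

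For the equality $Push([\alpha])=[f^{\alpha}]$, I would unwind the definition of $Push$: viewing the pure braid $\alpha$ as an isotopy $t\mapsto \alpha(t)$ of the finite set $X\subset\puno$, Theorem \ref{Hirsch} extends it to an ambient isotopy $\Psi\colon \puno\times[0,1]\to\puno$ with $\Psi_0=\id$ and $\Psi_t(x_i)=\alpha_i(t)$, so that $\Phi_{\alpha}=\Psi_1$ and $Push([\alpha])=[\Phi_{\alpha}]$. The key observation is that this very ambient isotopy furnishes an admissible parallel transport for the bundle $p\colon \monu\to\mon$. Indeed, setting
\[
\tilde H\colon (\puno - X)\times[0,1]\to\monu, \qquad \tilde H(y,t):=(\Psi_t(y),\alpha(t)),
\]
one checks that $\Psi_t(y)\neq\Psi_t(x_i)=\alpha_i(t)$ (since $\Psi_t$ is a homeomorphism and $y\neq x_i$), so $\tilde H$ lands in $\monu$; moreover $p\circ\tilde H(y,t)=\alpha(t)$, the map $\tilde H(\cdot,0)$ is the inclusion of the fibre, and $\tilde H(\cdot,t)$ is a diffeomorphism of $\puno - X$ onto $\puno-\alpha(t)$. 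Thus $\tilde H$ is a lift as in \ref{Htilde}, and by Lemma \ref{lemmatrasporto} its time-one map represents the parallel transport, giving $[f^{\alpha}]=[\tilde H(\cdot,1)]=[\Phi_{\alpha}|_{\puno - X}]$. Since $\Phi_{\alpha}$ fixes each $x_i$, its restriction is an orientation-preserving self-homeomorphism of $\puno - X$ fixing every puncture, and — using that homotopic surface homeomorphisms are isotopic, which is exactly what underlies the identification $\PMod(\puno-X)\cong\Out^{**}(\pi_1(\puno-X,x_0))$ of Corollary \ref{dehnIII} — we conclude $[f^{\alpha}]=[\Phi_{\alpha}]=Push([\alpha])$ in $\PMod(\puno - X)$.

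For the commutativity of the triangle I would simply chase definitions. By the first part and the definition of $F$, we have $F(Push([\alpha]))=F([f^{\alpha}])=[(f^{\alpha})_*]$, the class of $(f^{\alpha})_*$ in $\Out^{**}(\pi_1(\puno-X,x_0))$. On the other hand $\tilde\varepsilon([\alpha])$ is by definition the image of $\varepsilon([\alpha])$ in $\Out^{**}$, and Proposition \ref{esp-parall} (together with the remark $\Im\varepsilon\subset\Aut^{**}$ established just before this proposition) identifies $\varepsilon([\alpha])$ with $(f^{\alpha})_*$. A harmless basepoint issue remains — a general parallel transport need not fix $x_0$, whereas the section-normalised lift $\sigma=s\circ\alpha$ of Proposition \ref{esp-parall} realises the equality on the nose — but this ambiguity is absorbed in $\Out^{**}$, where inner automorphisms are killed. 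Hence $F(Push([\alpha]))=[(f^{\alpha})_*]=[\varepsilon([\alpha])]=\tilde\varepsilon([\alpha])$, and the triangle commutes.

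The main obstacle, and the only genuinely non-formal point, is the first step: matching the two a priori unrelated recipes for ``following the braid $\alpha$'' — extending an isotopy of the punctures across the whole sphere (for $Push$) versus horizontally lifting paths in the fibre bundle $\monu\to\mon$ (for $f^{\alpha}$). Producing the explicit lift $\tilde H$ above is what bridges them, and it rests on the fact that the ambient isotopy extending the braid automatically respects the complement $\monu\subset\puno\times\mon$. Everything after that is bookkeeping with the definitions of $F$, $\varepsilon$ and $\tilde\varepsilon$ and an appeal to Proposition \ref{esp-parall}.
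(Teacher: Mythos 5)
Your argument is correct, and it rests on the same key idea as the paper's proof: the ambient isotopy defining $Push$ and the horizontal lift defining the parallel transport are two incarnations of the same object. The difference is the direction in which you build the bridge. The paper starts from the lift $\tilde H$ of Lemma \ref{lemmatrasporto} on $(\PP^1-X)\times[0,1]$ and \emph{extends} it to an ambient isotopy of $\PP^1$ by filling in the punctures with the braid strands $\alpha_i(t)$; since $Push$ is independent of the choice of isotopy extension, this gives $Push([\alpha])=[f^\alpha]$ directly for the given $f^\alpha$. You instead start from the ambient isotopy $\Psi$ produced by Theorem \ref{Hirsch} and \emph{restrict} it to the punctured fibre, checking it is an admissible lift; this identifies $[\Phi_\alpha|_{\puno-X}]$ with $T([\alpha])$, but then, to compare with the \emph{given} $f^\alpha$ of the statement, you must pass from the homotopy class (which is all that \ref{Htilde} controls) to the isotopy class in $\PMod(\puno-X)$, which you correctly do by invoking that homotopic homeomorphisms of the punctured sphere are isotopic. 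So your route needs one extra (standard) input, while the paper's route instead needs the unproved but routine fact that the filled-in map $\psi$ is genuinely an ambient isotopy at the points $x_i$ — an issue your direction sidesteps entirely, since the ambient isotopy is handed to you by the isotopy extension theorem. The second half of your argument (chasing $F$, $\varepsilon$, $\tilde\varepsilon$ through Proposition \ref{esp-parall} and absorbing the basepoint ambiguity in $\Out^{**}$) matches the paper's.
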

\begin{proof}
Let $\alpha: [0,1]\ra \M_{0,n}$ be a pure braid in $\puno$, with
$\alpha(0)=\alpha(1)=X$, that we think as an isotopy from $X$ to
$X$.  Let $\tilde H: (\PP^1 - X) \times [0,1 ] \ra \monu $ and
$f^{\alpha}$ be as in Lemma \ref{lemmatrasporto}.  Define a map
$\psi: \PP^1 \times [0,1]\ra \PP^1$ by
$\psi( u,t): = \tilde H (u,t)$ for $u \not \in X$ and
$\psi(x_i,t):=\alpha_i(t)$.  So $\psi$ is an ambient isotopy of
$\PP^1$ extending the isotopy $\alfa$.  This proves the result,
since by Proposition \ref{esp-parall}
$\varepsilon ([\alfa]) = f^{\alpha}_*$, so
$\tilde\varepsilon([\alfa] ) =f^{\alpha}_* \mod \Inn \pi_1(\PP^1-X,
x_0)$, while $Push ([\alfa] ) = [f^{\alpha}]$.
\end{proof}

\begin{remark}  Considering
configurations of points in $\C$ instead of $\C^{**}$, Proposition
\ref{commut} corresponds to Theorem 1.10 in \cite{birman}.
\end{remark}

\begin{prop}
\label{traspa}
Let $x = (x_0,X) \in \monu$ and let
$\nu \in \Aut^{**}\pi_1(\PP^1-X , x_0)$. Then there are
$[\alpha]\in \pi_1(\mon, X)$, a lifting $\tilde \alpha $ of
$\alpha$ with $\tilde\alpha(0) = \tilde\alpha(1)=x_0$, a parallel
transport $f_t^\alpha $ such that
$f_t^\alpha (x_0) = \tilde\alpha(t)$ and
$\z\in \pi_1(\PP^1-X , x_0)$ such that
$\nu = \inn_\z \circ f^\alpha _*$.
\end{prop}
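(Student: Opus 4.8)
The plan is to deduce the statement from the Birman exact sequence \eqref{birman} together with the Dehn--Nielsen--Baer description of $\autss$ in Corollary \ref{dehnIII}, the key point being that the map $Push$ in \eqref{birman} is surjective.

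First I would use Corollary \ref{dehnIII} to realize $\nu$ geometrically: since $\nu \in \autss\pi_1(\PP^1 - X, x_0)$, there are $w \in \pi_1(\PP^1-X,x_0)$ and an orientation-preserving self-homeomorphism $h$ of $\PP^1$ with $h(x_i) = x_i$ for $0 \le i \le n$ and $\nu = \inn_w \circ h_*$. Restricting $h$ to $\PP^1 - X$ gives a class $[h] \in \PMod(\PP^1 - X)$, and by construction $F([h]) = [h_*] = [\nu]$ in $\Out^{**}(\pi_1(\PP^1-X,x_0))$.

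The crucial observation is that $\cxx = \PP^1 - \{0,1,\infty\}$ is a thrice-punctured sphere $S_{0,3}$, and $\PMod(S_{0,3}) = 1$ (see \cite{fmarga}). Hence in \eqref{birman} the target of $Forget$ is trivial, so $Push: \pi_1(\mon, X) \to \PMod(\PP^1 - X)$ is an isomorphism; in particular $[h] = Push([\alpha])$ for a (unique) $[\alpha] \in \pi_1(\mon, X)$. Because the exact sequence in \ref{succesioniesatte} shows that $p_*: \pi_1(\monu, x) \to \pi_1(\mon, X)$ is surjective, I may pick a representative of $[\alpha]$ that lifts to a loop $\tilde\alpha$ in $\monu$ based at $x$; feeding $\sigma = \tilde\alpha$ into Lemma \ref{lemmatrasporto} produces a parallel transport $f_t^\alpha$ with $f_t^\alpha(x_0) = \tilde\alpha(t)$ and $f^\alpha(x_0) = x_0$. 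By Proposition \ref{commut}, $[f^\alpha] = Push([\alpha]) = [h]$ in $\PMod(\PP^1 - X)$, hence $[f^\alpha_*] = F([h]) = [\nu]$ in $\Out^{**}$. Therefore $f^\alpha_*$ and $\nu$ differ by an inner automorphism, i.e. $\nu = \inn_\z \circ f^\alpha_*$ for a suitable $\z \in \pi_1(\PP^1-X,x_0)$, which is the assertion.

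The heart of the argument, and the only nontrivial input beyond the results already proved, is the surjectivity of $Push$, equivalently the triviality of $\PMod(\cxx)$; once this is granted, the proof is a matter of threading together Corollary \ref{dehnIII}, Proposition \ref{commut} and Lemma \ref{lemmatrasporto}. The one routine point to verify carefully is the passage from the equality of isotopy classes $[f^\alpha] = [h]$ in $\PMod(\PP^1-X)$ to the relation $\nu = \inn_\z \circ f^\alpha_*$ between honest automorphisms, which rests on the fact that isotopic homeomorphisms induce automorphisms of $\pi_1$ that coincide in $\Out^{**}$.
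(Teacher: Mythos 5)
Your proposal is correct and follows essentially the same route as the paper: the paper's proof likewise rests on the triviality of $\PMod(\C^{**})$, which via the Birman sequence \eqref{birman} makes $Push$ (hence $\tilde\varepsilon$) an isomorphism onto $\Out^{**}(\pi_1(\puno-X,x_0))$, so that $\nu$ agrees with some $\varepsilon([\alpha])=f^\alpha_*$ up to an inner automorphism. Your version merely makes explicit the intermediate use of Corollary \ref{dehnIII} and Proposition \ref{commut}, which the paper's shorter argument invokes implicitly.
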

\begin{proof}
Since $\PMod(\C^{**})$ is trivial (see \cite[Proposition
2.3]{fmarga}), it follows from \eqref{birman} that $Push$ (and
thus $\tilde\varepsilon$) is an isomorphism. In particular, for
every ${\nu}\in \Aut^{**}(\pi_1(\puno-X, x_0))$, there exists
$[\alpha]\in \pi_1(\M_{0,n}, X)$ and
$\sigma\in \Inn(\pi_1(\puno-X, x_0))$ such that
$f^{\alpha}_*=\varepsilon([\alpha])={\nu}\circ\sigma$. Thus
$\nu = \inn_\z \circ f^\alpha _*$ for some
$\z\in \pi_1(\PP^1-X , x_0)$.
\end{proof}

\section{Topological types of actions}\label{sec:tipitopo}

\begin{defin}Let $G$ be a finite group and let $\Sigma_1$ and
$\Sigma_2$ be oriented topological surfaces both endowed with an
action of $G$. We say that the two actions are \emph{topologically
equivalent} if there are $\eta \in \Aut G$ and an orientation
preserving homeomorphism $f: \Sigma_1 \cong \Sigma_2$ such that
$f(g\cd x) = \eta (g) \cd f(x)$ for any $x\in \Sigma_1$ and any
$g\in G$, see \cite{ganzdiez}.  An equivalence class is called a
\emph{topological type} of $G$-action (sometimes this is called
\emph{unmarked topological type}).
\end{defin}

\begin{defin}\label{tipitopo}
Fix on $S^2$ the orientation by the outer normal. We let
$\topo^n(G)$ denote the set of topological types of $G$-actions on a
topological surface $\Sigma$ such that $\Sigma/G\cong S^2$ (as
oriented surfaces) and the projection $\pi: \Sigma \ra \Sigma/G$ has
$n$ branch points.
\end{defin}


\begin{defin}
\label{def:data}
If $G$ is a finite group an $n$-datum is an epimorphism
$\theta : \Ga_n \ra G$ is such that $\theta(\ga_i)\neq 1$ for
$i=1, \lds, n$. We let $\datan(G)$ denote the set of all $n$-data
associated with the group $G$.
\end{defin}

\begin{say} Fix a point $x=(x_0, X)\in \conf_{0,n+1}S^2$ and a
geometric basis $\mathscr{B}=\{[\alfa_i]\}_{i=1}^n$ of
$\pi_1(S^2\setminus X, x_0)$.  Denote by
$\chi: \Ga_n \cong \pi_1(S^2 - X, x_0)$ the corresponding
isomorphism. If $\theta: \Gamma_n \ra G$ is a $n$-datum, the
epimorphism $\theta\circ \chi\meno$ gives rise to a topological
$G$-covering $p:\Sigma_0^{\theta} \ra S^2 -X$.  By the topological
part of Riemann Existence Theorem this can be completed to a
branched $G$-cover $p: \Sigma^{\theta} \ra S^2$. By taking the
equivalence class of $\Sigma^{\theta}$ we get a topological type of
$G$-action. We get a map
\begin{gather*}
\mathscr{F}_{x, \mathscr{B}}: \datan(G)\ra \mathscr{T}^n(G), \quad
(\theta:\Gamma_n\ra G)\longmapsto [\Sigma^{\theta}]
\end{gather*}
\end{say}


\begin{say}
\label{Hurwitz} 
We now introduce an action on the set of data that will be very important for the rest of the paper. 
By Dehn-Nielsen-Baer Theorem
$\outa \cong \Out^*(\pi_1(S^2-X, x_0)) \cong \Mod(S^2-X)$.  The
latter group has a presentation with generators
$\sigma, \lds, \sigma_{n-1} $ and relations
\begin{equation}
\label{umpf}
\begin{gathered}
\sigma_i \sigma_j =
\sigma_j \sigma_i \quad \text{for} \, \, |i-j|\geq 2, \qquad
\sigma_{i+1}\sigma_i\sigma_{i+1}=\sigma_i\sigma_{i+1}\sigma_i, \\
\sigma_1 \cds \sigma_{n-2} \sigma_{n-1}^2 \sigma_{n-2} \cds \sigma_1 = 1,\qquad
( \sigma_1 \cds \sigma_{n-1} )^n =1.
\end{gathered}
\end{equation}
(See \cite[Thm. 4.5 p. 164]{birman}.)  Let
$\ts_i: \Ga_r \ra \Ga_r$ be the automorphism defined by the rule:
\begin{gather*}
\ts_i (\gamma_i) = \gamma_{i+1}, \quad \ts_i
(\gamma_{i+1}) = \gamma_{i+1} ^{-1} \gamma_i \gamma_{i+1}, \quad
\ts_i (\gamma_j ) = \gamma_j \quad \text{for }j \neq i, i+1.
\end{gather*}
These automorphisms belong to $\auta$ and satisfy the relations
\eqref{umpf}. Therefore there is a (unique) morphism
$\phi : \outa \ra \auta$ such that $\phi(\sigma_i) : = \ts_i$. {The group $\Aut^*\Gamma_n \times \aug $ acts on the set $\datan(G) $ by the rule
\begin{gather*}
(\nu, \eta) \cd \theta : = \eta \circ \theta \circ \nu\meno,
\end{gather*}
where $(\nu, \eta) \in \Aut^*\Gamma_n\times \Aut G$ and $\theta\in \datan(G)$ is a datum.}
Using $\phi$ we let $\Aut G \times \outa$ act on $\datan$: if
$(\eta,\sigma) \in \Aut G \times \outa$ and $\theta \in \datan$, then
\begin{gather*}
(\eta,\sigma) \cd \theta :=  \eta \circ \theta \circ \phi(\sigma)\meno.
\end{gather*}
We claim that the actions of $\Aut G \times \auta $ and of $\Aut G\times \outa$ on $\datan$ have the same orbits, hence
\begin{gather*}
\datan / \Aut G \times \auta = \datan / \Aut G \times \outa.
\end{gather*}
The reason is that $\phi$ is a splitting of the short exact sequence
\begin{gather*}
1 \lra \Inn  \Ga_n \hookrightarrow \auta \stackrel{\pi}{\lra} \outa \ra 1,
\end{gather*}
i.e.  $\pi\circ \phi$ is the identity on $\outa$.  So 
every $\nu \in \auta$ can be factored as
$
\nu = \phi(\sigma) \cd \inn_a,
$ with $a\in \Ga_n$ and
\begin{gather*}
(\eta, \nu) \cd \theta = \eta \circ \theta \circ( \inn_a)\meno \circ \phi(\sigma)\meno = \eta \circ \inn_{\theta(a)\meno} \circ \theta  \circ \phi(\sigma)\meno = \\
=( \eta \circ \inn_{\theta(a)\meno}, \sigma ) \cd \theta.
\end{gather*}
This shows that the orbits of the two actions coincide.
\end{say}


\begin{teo}\label{teotipitopo}
Let $G$ be a finite group. Choose
\begin{enumerate}
\item an element $x=(x_0, X)\in \conf_{0,n+1}S^2$;
\item a geometric basis $\mathscr{B}=\{[\alfa_i]\}_{i=1}^n$ of
$\pi_1(S^2\setminus X, x_0)$.
\end{enumerate}
Then the map $\mathscr{F}_{x, \mathscr{B}}$ induces a bijection
between $\datan(G)/(\auta \times \aug)$ and the set
$\mathscr{T}^n(G)$ of topological types of $G$-actions. The
bijection does not depend on the choices of the point
$x\in \conf_{0,n+1}S^2$ and of the geometric basis $\mathscr{B}$.
\end{teo}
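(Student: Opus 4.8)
The plan is to analyze $\mathscr{F}_{x,\mathscr{B}}$ through the monodromy representation and to translate both the equivalence relation defining $\mathscr{T}^n(G)$ and the $\auta \times \aug$-action into statements about homomorphisms $\pi_1(S^2 - X, x_0) \to G$, using the Dehn-Nielsen-Baer theorem as the dictionary. Write $\rho_\theta := \theta \circ \chi\meno$ for the monodromy of $\Sigma^\theta$; since $\chi$ is an isomorphism, $\theta \mapsto \rho_\theta$ is a bijection between $\datan(G)$ and the epimorphisms $\pi_1(S^2-X,x_0) \to G$ that are nontrivial on each $[\alfa_i]$. Three things must be shown: that $\mathscr{F}_{x,\mathscr{B}}$ is surjective, that two data have the same image exactly when they lie in one $\auta \times \aug$-orbit, and that the resulting bijection is independent of $(x,\mathscr{B})$.

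For surjectivity, given a topological type represented by a branched $G$-cover $\pi : \Sigma \to S^2$ with $n$ branch points, I would first move the branch locus onto $X$ by an orientation-preserving self-homeomorphism of $S^2$ (possible since any two $n$-point subsets of $S^2$ are so related, and this replaces $\Sigma$ by a topologically equivalent cover with $\eta = \id$). Restricting to $S^2 - X$ gives a connected unramified $G$-cover whose monodromy $\rho$ pulls back under $\chi$ to a datum $\theta := \rho \circ \chi$; it is an epimorphism because the cover is connected, and $\theta(\ga_i) = \rho([\alfa_i]) \neq 1$ because $x_i$ is a genuine branch point. Then $\mathscr{F}_{x,\mathscr{B}}(\theta)$ is the given type.

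For the fibres of $\mathscr{F}_{x,\mathscr{B}}$ the $\aug$-factor is easy, since replacing $\theta$ by $\eta \circ \theta$ leaves the covering space unchanged but twists the $G$-action by $\eta$, which is exactly an equivalence with automorphism $\eta$. The main computation concerns the $\auta$-factor and injectivity. A topological equivalence $f : \Sigma^\theta \to \Sigma^{\theta'}$, equivariant through $\eta \in \Aut G$, descends to an orientation-preserving homeomorphism $\bar f$ of $S^2$ carrying $X$ to $X$ (permuting the branch points) and restricting to $S^2 - X$. Comparing endpoints of lifted loops shows $\rho_{\theta'} \circ \bar f_* = \eta \circ \rho_\theta$ up to conjugation in $G$, whence $\theta' = \eta \circ \theta \circ \nu\meno$ with $\nu := \chi\meno \bar f_* \chi$; since $\bar f$ is an orientation-preserving homeomorphism permuting the $x_i$, its induced map sends geometric bases to geometric bases, so $\bar f_* \in \Aut^* \pi_1(S^2-X,x_0)$ and therefore $\nu \in \auta$ by the criterion recorded after \ref{sayautstar}. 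Conversely, given $\nu \in \auta$ and $\eta \in \Aut G$, the automorphism $\chi \nu \chi\meno \in \Aut^*$ is realized by an orientation-preserving homeomorphism $h$ via Theorem \ref{dehnI}, and pulling back the cover along $h$ while twisting by $\eta$ exhibits $\Sigma^{\eta \circ \theta \circ \nu\meno}$ as topologically equivalent to $\Sigma^\theta$. This is the crux, and the main obstacle is the careful bookkeeping of basepoints and of the inner-automorphism ambiguities in Dehn-Nielsen-Baer; these are harmless precisely because $\Inn \Ga_n \subset \auta$, so conjugating $\nu$ (and correspondingly adjusting $\eta$) keeps us inside the same orbit, which is where the identification $\datan/(\auta \times \aug) = \datan/(\outa \times \aug)$ from \ref{Hurwitz} enters.

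Finally, for independence of the choices, given a second pair $(x',\mathscr{B}')$ with isomorphism $\chi'$, I would produce an orientation-preserving homeomorphism $g$ of $S^2$ with $g(x) = x'$ carrying $\mathscr{B}$ to a geometric basis at $x'$. Its existence follows by combining the path-connectedness of $\conf_{0,n+1}S^2$ (to match the marked points) with Dehn-Nielsen-Baer and \ref{bbbendef} (to match geometric bases up to an inner/$\auta$ ambiguity, since any two geometric bases differ by an element of $\Aut^*\pi_1$). For any datum $\theta$, the homeomorphism $g$ then identifies the cover built from $(x,\mathscr{B})$ with the one built from $(x',\mathscr{B}')$ up to the $\auta$-action, so $\overline{\mathscr{F}_{x,\mathscr{B}}}$ and $\overline{\mathscr{F}_{x',\mathscr{B}'}}$ coincide as maps on $\datan(G)/(\auta \times \aug)$.
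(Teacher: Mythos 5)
Your proposal is correct and follows essentially the same route as the paper: surjectivity by moving the branch locus onto $X$ and reading off the monodromy, constancy on orbits and injectivity via the Dehn--Nielsen--Baer theorem combined with the lifting criterion for covers (with the inner-automorphism ambiguities absorbed exactly as in \S\ref{Hurwitz}), and independence of $(x,\mathscr{B})$ by realizing the change of basepoint and basis through an orientation-preserving homeomorphism. The paper merely packages the first two steps as Propositions \ref{lem3} and \ref{lem4} and makes the basepoint bookkeeping explicit through Lemmas \ref{lem1} and \ref{lem2}, but the underlying argument is the same.
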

From the discussion in \ref{Hurwitz} we immediately get the following.
\begin{cor}\label{outout}
The topological types of $G$-actions on curves of genus $g$ are in bijection with $\datan/\Aut G\times \outa$.
\end{cor}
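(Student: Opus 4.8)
The plan is to obtain the corollary as a direct consequence of Theorem \ref{teotipitopo} and the orbit computation already carried out in \ref{Hurwitz}. Fixing once and for all a base point $x$ and a geometric basis $\mathscr{B}$, Theorem \ref{teotipitopo} supplies a bijection
\[
\mathscr{F}_{x,\mathscr{B}}: \datan(G)/(\auta \times \Aut G) \xrightarrow{\ \sim\ } \topo^n(G),
\]
independent of these choices. The whole task is therefore to trade the quotient by $\auta \times \Aut G$ for the quotient by $\Aut G \times \outa$, and then to keep track of the genus.

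First I would quote the final assertion of \ref{Hurwitz}, namely that the actions of $\Aut G \times \auta$ and of $\Aut G \times \outa$ on $\datan(G)$ have the same orbits, so that
\[
\datan(G)/(\Aut G \times \auta) = \datan(G)/(\Aut G \times \outa).
\]
The point is that the splitting $\phi: \outa \to \auta$ of $1 \to \Inn \Ga_n \to \auta \to \outa \to 1$ factors each $\nu \in \auta$ as $\nu = \phi(\sigma)\cd \inn_a$, and precomposing a datum $\theta$ with $\inn_a$ equals postcomposing it with $\inn_{\theta(a)\meno}$; since $\Inn G \subset \Aut G$, this inner part is absorbed into the $\Aut G$-factor and the orbit is unchanged. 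Feeding this equality into $\mathscr{F}_{x,\mathscr{B}}$ yields a choice-free bijection
\[
\topo^n(G) \cong \datan(G)/(\Aut G \times \outa).
\]

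It then remains to match the genus on the two sides. By Riemann--Hurwitz the genus of the surface $\Sigma^\theta$ built from $\theta$ is determined by $|G|$ together with the orders $\ord(\theta(\ga_i))$, and this multiset of orders is invariant under the $\Aut G \times \outa$-action: the factor $\Aut G$ preserves orders, while an element of $\outa$ sends each $\ga_i$ to a conjugate of some $\ga_j$ by condition (1) of Definition \ref{sayautstar} and hence only permutes the orders. Thus the genus is constant on orbits; it is likewise a homeomorphism invariant of a topological type. Since $\mathscr{F}_{x,\mathscr{B}}$ sends $\theta$ to $[\Sigma^\theta]$, it respects this common invariant and therefore restricts to the desired bijection between the genus-$g$ topological types and the genus-$g$ orbits of data.

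The only step carrying any content is the orbit identification imported from \ref{Hurwitz}; the rest is bookkeeping, and I expect no real obstacle, the corollary having been arranged precisely so that it drops out once the $\outa$-action has been defined through the splitting $\phi$.
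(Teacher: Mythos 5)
Your proposal is correct and follows the paper's own route exactly: Theorem \ref{teotipitopo} gives the bijection with $\datan(G)/(\auta\times\Aut G)$, and the orbit identification from \ref{Hurwitz} (absorbing $\Inn\Ga_n$ into the $\Aut G$-factor via the splitting $\phi$) converts this into the quotient by $\Aut G\times\outa$, which is precisely what the paper means by ``From the discussion in \ref{Hurwitz} we immediately get the following.'' Your additional Riemann--Hurwitz bookkeeping showing that the genus is constant on orbits is a harmless and reasonable elaboration that the paper leaves implicit.
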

The proof of Theorem \ref{teotipitopo} is based on the following two
Propositions.

\begin{prop}\label{lem3}
The map $\mathscr{F}_{x, \mathscr{B}}$ is constant on the orbits of
the action of $\auta \times \aug $.
\end{prop}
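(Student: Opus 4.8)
The plan is to exploit that $\auta\times\aug$ acts as a group, so it is enough to prove separately that $\mathscr F_{x,\mathscr B}$ is invariant under the two subgroups $\{1\}\times\aug$ and $\auta\times\{1\}$, since every pair factors as $(\nu,\eta)=(\nu,1)\cd(1,\eta)$. Throughout I would write $\rho:=\theta\circ\chi\meno:\pi_1(S^2-X,x_0)\ra G$ for the monodromy attached to a datum $\theta$, so that $\mathscr F_{x,\mathscr B}(\theta)$ is the topological type of the branched $G$-cover $\Sigma^\theta\ra S^2$ produced from $\rho$ by Riemann's existence theorem. The whole strategy is to match each of the two algebraic operations on $\theta$---post-composition by $\eta$ and pre-composition by $\nu\meno$---with a geometric operation on $G$-covers that visibly preserves the topological type.

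First I would treat $(1,\eta)$, that is $\theta\mapsto\eta\circ\theta$, whose monodromy is $\eta\circ\rho$. Using the associated-bundle model $\Sigma^\theta_0=\widetilde{(S^2-X)}\times_{\pi_1}G$ (with $\pi_1$ acting through $\rho$), the automorphism $\eta$ induces $[\tilde y,g]\mapsto[\tilde y,\eta(g)]$, which I expect to be a well-defined homeomorphism onto the cover with monodromy $\eta\circ\rho$, covering the identity of $S^2-X$ and satisfying $f(h\cd e)=\eta(h)\cd f(e)$. This is precisely a topological equivalence with automorphism $\eta$, and it extends over the branch locus to a topological equivalence of the branched completions $\Sigma^\theta$ and $\Sigma^{\eta\circ\theta}$; hence $\mathscr F_{x,\mathscr B}(\eta\circ\theta)=\mathscr F_{x,\mathscr B}(\theta)$. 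The identical computation applies when $\eta\in\Inn G$, a special case I would reuse below.

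Next I would treat $(\nu,1)$, that is $\theta\mapsto\theta\circ\nu\meno$. Setting $\mu:=\chi\nu\chi\meno$, the discussion after Definition \ref{sayautstar} gives $\mu\in\Aut^*\pi_1(S^2-X,x_0)$, and a short manipulation shows the new monodromy is $\rho\circ\mu\meno$. I would then apply the Dehn--Nielsen--Baer theorem (Theorem \ref{dehnI}) to $\mu\meno\in\Aut^*$, writing $\mu\meno=\inn_c\circ h_*$ with $c\in\pi_1(S^2-X,x_0)$ and $h$ an orientation-preserving self-homeomorphism of $S^2-X$ fixing $x_0$ (which, as in the proof of Corollary \ref{dehnIII}, extends to an orientation-preserving homeomorphism $\bar h$ of $S^2$ permuting $X$). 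Then $\rho\circ\mu\meno=\inn_{\rho(c)}\circ(\rho\circ h_*)$. Pulling the branched cover $\Sigma^\theta$ back along $\bar h$ gives a branched $G$-cover whose monodromy is $\rho\circ h_*$ and which is $G$-isomorphic to $\Sigma^\theta$ via the tautological homeomorphism covering $\bar h$ (orientation preserving because $\bar h$ is); so $\rho\circ h_*$ has the same topological type as $\rho$. The residual inner factor $\inn_{\rho(c)}\in\Inn G$ is then absorbed by the second paragraph, yielding $\mathscr F_{x,\mathscr B}(\theta\circ\nu\meno)=\mathscr F_{x,\mathscr B}(\theta)$.

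The main obstacle I anticipate lies in the middle of the last step: converting the abstract automorphism $\mu\meno\in\Aut^*\pi_1$ into a genuine homeomorphism of the base is exactly what Dehn--Nielsen--Baer supplies, but one must then verify carefully that pre-composing the monodromy by $h_*$ corresponds geometrically to pullback along $\bar h$, tracking orientations and checking that the unbranched $G$-equivalence extends across the branch points to the Riemann-existence completion. By contrast, the inner-automorphism bookkeeping (both $\inn_c$ on $\pi_1$ and the induced $\inn_{\rho(c)}$ on $G$) should be harmless, precisely because topological equivalence is unmarked and already permits post-composition by $\Aut G$.
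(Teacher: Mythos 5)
Your proposal is correct and follows essentially the same route as the paper: the key step in both is to use the Dehn--Nielsen--Baer theorem to realize the $\Aut^*$-automorphism as $\inn_c\circ h_*$ for an orientation-preserving homeomorphism $h$, and to absorb the leftover inner automorphism of $G$ using the fact that unmarked topological equivalence allows post-composition by $\Aut G$. The only (cosmetic) difference is organizational: you split the action into the factors $(1,\eta)$ and $(\nu,1)$ and build the equivariant homeomorphisms explicitly (associated bundle, resp.\ pullback along $\bar h$), whereas the paper treats $(\nu,\eta)$ at once by comparing the kernels $h_*(p_*\pi_1(\Sigma^\theta_0))=\ker(\eta\circ\theta\circ\chi\meno\circ\overline{\nu}\meno)$ and invoking the lifting theorem.
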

\begin{proof}
Let $\theta: \Gamma_n\ra G$ be a datum and
$(\nu, \eta)\in \auta\times \Aut G$. Let
$\theta'= \eta \circ \theta \circ \nu^{-1}$. We want to show that
$\Sigma^{\theta}$ and $\Sigma^{\theta'}$ have the same topological
type of $G-$action. Set
$\overline{\nu}=\chi\circ \nu \circ \chi^{-1}$. Observe that
$\overline{\nu}\in \Aut^*(\pi_1(S^2\setminus X, x_0))$ since
$\nu\in \auta$.  By the Dehn-Nielsen-Baer Theorem \ref{dehnI}, there
is $\sigma\in \text{Inn}(\pi_1(S^2-X, x_0))$ and an
orientation-preserving diffeomorphism
$h : (S^2 -X , x_0) \ra (S^2 -X , x_0) $ such that $h(x_0)=x_0$ and
$\sigma \circ
h_*=\overline{\nu}$. 
Let $p, p'$ denote the
projections:
\begin{equation*}
\begin{tikzcd}
\Sigma^{\theta}_0 \arrow[d, "p"]
& \Sigma^{\theta'}_0\arrow[d, "p'"] \\
(S^2-X, x_0)\arrow[r,"h" ] & (S^2-X, x_0).
\end{tikzcd}
\end{equation*}
Choose $\tilde{x}_0 \in \Sigma_0^\theta $ and
$\tilde{x}'_0 \in \Sigma_0^{\theta'} $ both over $x_0$.  We have
that
$h_*(p_*(\pi_1(\Sigma^\theta_0,\tilde{x}_0)))=(\sigma^{-1}\circ
\overline{\nu})(\ker(\theta\circ
\chi^{-1}))=\sigma^{-1}(\ker(\theta\circ \chi^{-1}\circ
\overline{\nu}^{-1}))=\ker(\theta\circ \chi^{-1}\circ
\overline{\nu}^{-1})$, where the last equality holds because
$\sigma$ is an inner automorphism. Morever, since $\eta\in \Aut G$,
$\ker(\theta\circ \chi^{-1}\circ
\overline{\nu}^{-1})=\ker(\eta\circ\theta\circ \chi^{-1}\circ
\overline{\nu}^{-1})$. Thus
$h_*(p_*(\pi_1(\Sigma^\theta_0,\tilde{x}_0)))=\ker(\eta\circ\theta\circ
\chi^{-1}\circ
\overline{\nu}^{-1})=(p')_*(\pi_1(\Sigma^{\theta'}_0))$.  By the
lifting theorem we get an oriented homeomorphism
$\tilde{h}: \Sigma^{\theta}_0\ra \Sigma^{\theta'}_0$, such that the
diagram commutes and which extends to the compactifications. Hence
the $G$-actions on $\Sigma^{\theta}$ and $\Sigma^{\theta'}$ have the
same topological type.
\end{proof}

\begin{prop}\label{lem4}
If $\theta\in\datan(G)$, then $\mathscr{F}_{x, \mathscr{B}}(\theta)$
does not depend on the choices of the point $x\in \conf_{0,n+1}S^2$
and of the geometric basis $\mathscr{B}$.
\end{prop}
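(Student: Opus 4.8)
The plan is to compare the coverings attached to two choices $(x,\mathscr{B})$ and $(x',\mathscr{B}')$ by producing an orientation-preserving homeomorphism of the underlying punctured spheres that carries one monodromy representation into the other up to an automorphism of $G$. Write $\chi:\Ga_n\to\pi_1(S^2-X,x_0)$ and $\chi':\Ga_n\to\pi_1(S^2-X',x'_0)$ for the isomorphisms attached to $\mathscr{B}=\{[\alfa_i]\}$ and $\mathscr{B}'=\{[\alfa'_i]\}$, and put $\rho=\theta\circ\chi\meno$, $\rho'=\theta\circ\chi'\meno$. Let $\Sigma_0\to S^2-X$ and $\Sigma'_0\to S^2-X'$ be the corresponding $G$-coverings, with branched compactifications $\Sigma,\Sigma'$, so that $\mathscr{F}_{x,\mathscr{B}}(\theta)=[\Sigma]$ and $\mathscr{F}_{x',\mathscr{B}'}(\theta)=[\Sigma']$. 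Since $\theta$ is an epimorphism, both $\Sigma_0$ and $\Sigma'_0$ are connected regular $G$-coverings, so they are determined by their kernels together with the identification of the deck group with $G$.

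First I would consider the isomorphism $\phi:=\chi'\circ\chi\meno:\pi_1(S^2-X,x_0)\to\pi_1(S^2-X',x'_0)$. By construction $\phi([\alfa_i])=[\alfa'_i]$, so $\phi$ carries the geometric basis $\mathscr{B}$ onto the geometric basis $\mathscr{B}'$. Hence Corollary \ref{dehnII} applies and yields $\sigma\in\Inn(\pi_1(S^2-X',x'_0))$ together with an orientation-preserving homeomorphism $h:S^2-X\to S^2-X'$ with $h(x_0)=x'_0$ and $\phi=\sigma\circ h_*$; equivalently $h_*=\sigma\meno\circ\chi'\circ\chi\meno$.

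The key step is to compute the monodromy of the pulled-back covering $h^*\Sigma'_0\to S^2-X$, namely $\rho'\circ h_*$. Since a conjugate of an inner automorphism is inner, $\chi'\meno\circ\sigma\meno\circ\chi'=\inn_d$ for some $d\in\Ga_n$, and therefore
\[
\rho'\circ h_*=\theta\circ\chi'\meno\circ\sigma\meno\circ\chi'\circ\chi\meno=\theta\circ\inn_d\circ\chi\meno=\inn_{\theta(d)}\circ\rho.
\]
Setting $\eta:=\inn_{\theta(d)}\in\Inn G\subset\Aut G$, the covering $h^*\Sigma'_0$ thus has monodromy $\eta\circ\rho$, which has the same kernel as $\rho$ because $\eta$ is an automorphism of $G$. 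Consequently $\Sigma_0$ and $h^*\Sigma'_0$ agree as coverings of $S^2-X$, and the identity on the common total space intertwines the two deck-group identifications, i.e. it is $\eta$-equivariant for the respective $G$-actions. Composing with the tautological $G$-map $h^*\Sigma'_0\to\Sigma'_0$ covering $h$, I obtain an orientation-preserving, $\eta$-equivariant homeomorphism $\Sigma_0\to\Sigma'_0$ lying over $h$. (This is the same mechanism already exploited in Proposition \ref{lem3}.)

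It then remains to pass to the compactifications: $h$ extends to an orientation-preserving self-homeomorphism of $S^2$ with $h(X)=X'$, and the equivariant homeomorphism above extends across the finitely many points over the branch loci to an orientation-preserving, $\eta$-equivariant homeomorphism $\Sigma\to\Sigma'$. This shows $[\Sigma]=[\Sigma']$ in $\mathscr{T}^n(G)$, so $\mathscr{F}_{x,\mathscr{B}}(\theta)$ is independent of the choices. I expect the main obstacle to be the monodromy bookkeeping in the displayed computation — tracking how the inner ambiguity $\sigma$ supplied by Corollary \ref{dehnII} transports through $\chi'\meno$ into an inner automorphism $\eta$ of $G$ — together with the standard but slightly delicate verification that equal kernels produce an $\eta$-equivariant isomorphism of $G$-coverings; the extension over the branch points is then routine.
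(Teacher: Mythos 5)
Your proposal is correct and follows essentially the same route as the paper: both hinge on Corollary \ref{dehnII} to convert the basis-comparison isomorphism $\chi'\circ\chi\meno$ into an orientation-preserving homeomorphism $h$ up to an inner automorphism, and then lift $h$ through the coverings using equality of kernels. The only (harmless) difference is that the paper splits the argument into two steps --- same point with different bases (reduced to Proposition \ref{lem3}) and then different points --- whereas you handle both variations in one stroke and make the monodromy bookkeeping $\rho'\circ h_*=\inn_{\theta(d)}\circ\rho$ explicit rather than just noting that $h_*$ matches the kernels.
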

\begin{proof}
First fix $x$ and consider two geometric bases $\mathscr{B}$ and
$\overline{\mathscr{B}}$. Let
$\chi, \bar{\chi} : \Ga_n \ra \pi_1(S^2-X, x_0)$ denote the
corresponding isomorphisms. Then
$\nu:= \chi\meno \circ \bar{\chi} \in \Aut^*\Ga_n $.  For a datum
$\theta$, we have
$\theta \circ \bar{\chi}\meno = \theta \circ \nu\meno \circ
\chi\meno$. So
$\mathscr{F}_{x, \overline{\mathscr{B}} } (\theta) =
\mathscr{F}_{x,\mathscr{B}} (\theta \circ \nu\meno)$. By Proposition
\ref{lem3},
$\mathscr{F}_{x,\mathscr{B}} (\theta \circ
\nu\meno)=\mathscr{F}_{x,\mathscr{B}} (\theta )$. Hence
$\mathscr{F}_{x, \overline{\mathscr{B}}} (\theta) =
\mathscr{F}_{x,\mathscr{B}} (\theta )$ as desired.
Now suppose that $x, y \in \conf _{0,n+1} S^2$. Let
$\chi : \Ga_n \ra \pi_1(S^2 -X, x_0) $ and
$\bar{\chi} : \Ga_n\ra \pi_1(S^2-Y,y_0) $ be the isomorphisms
associated with two geometric bases $\base$ and
$\overline{\base}$. Then
$\nu: = \bar{\chi} \circ \chi\meno : \pi_1(S^2 -X, x_0) \ra
\pi_1(S^2-Y,y_0) $ sends a geometric basis to a geometric
basis. Hence by Corollary \ref{dehnII}, there is
$\sigma\in \text{Inn}(\pi_1(S^2-Y, y_0)))$ and an orientation
preserving homeomorphism $h: (S^2 -X, x_0) \ra (S^2 -Y, y_0)$ such
that $h(x_0)=y_0$ and $\sigma \circ h_* = \nu$.  Given a datum
$\theta$, $h_*$ maps the kernel of $\theta\circ \chi\meno$ to the
kernel of $\theta \circ \bar{\chi} \meno$. By the lifting theorem
there is an oriented diffeomorphism $\tilde{h}$ that extends to the
compactifications. Hence the $G$-actions on $\Sigma^\theta_{x}$ and
$\Sigma^\theta_{y}$ have the same topological type.
\end{proof}

We recall two 
basic facts about monodromy maps.  Let $p:E\ra B$
be a topological $G$-covering. For $b\in B$ and $e\in p^{-1}(b)$, we
denote by $\mu_{p,e}$ the monodromy map $\mu_{p,e}:\pi_1(B, b)\ra G$
such that $g=\mu_{p,e}[\alpha]$ maps $e$ to $\alpha_e(1)$, where
$\alpha_e$ is the lifting of $\alpha$ with initial point $e$.

\begin{lemma}\label{lem1}
Let $p:E\ra B$ be a topological $G$-covering. Fix $b_0, b_1\in B$
and $e_i\in p^{-1}(b_i)$. Let $\delta$ be a path from $e_0$ to $e_1$
and $\gamma=p\circ \delta$. Then
$\mu_{p,e_0}=\mu_{p,e_1}\circ \gamma_{\#}$. In particular, if
$b_0=b_1$, $\mu_{p,e_0}$ and $\mu_{p,e_1}$ differ by an inner
automorphism of $\pi_1(B,b_0)$ or - equivalently - of $G$.
\end{lemma}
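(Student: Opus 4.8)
The plan is to verify the identity $\mu_{p,e_0} = \mu_{p,e_1}\circ \gamma_\#$ by evaluating both sides on an arbitrary class $[\alpha]\in \pi_1(B,b_0)$ and computing the right-hand monodromy through a piecewise lift of a concatenated loop. Recall that $\gamma_\#$ is the change-of-basepoint isomorphism $\pi_1(B,b_0)\ra \pi_1(B,b_1)$, given by $\gamma_\#[\alpha] = [i(\gamma)\ast \alpha \ast \gamma]$. Set $g_0 := \mu_{p,e_0}[\alpha]$, so that the lift $\alpha_{e_0}$ of $\alpha$ starting at $e_0$ ends at $\alpha_{e_0}(1) = g_0\cd e_0$. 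The aim is to show that the lift of $i(\gamma)\ast \alpha \ast \gamma$ starting at $e_1$ terminates at $g_0\cd e_1$; by the definition of the monodromy and freeness of the $G$-action this forces $\mu_{p,e_1}(\gamma_\#[\alpha]) = g_0$, which is exactly the claim.

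The single conceptual ingredient — and the step I expect to be the crux — is the $G$-equivariance of path lifting: for $g\in G$ the deck transformation $e\mapsto g\cd e$ covers $\id_B$, so $g\cd \alpha_e$ is again a lift of $\alpha = p\circ(g\cd \alpha_e)$, and by the uniqueness of path lifting for coverings it must coincide with $\alpha_{g\cd e}$. Granting this, the rest is bookkeeping. Since $p\circ \delta = \gamma$ and $\delta(0)=e_0$, the path $\delta$ is the lift of $\gamma$ from $e_0$, so $i(\delta)$ is the lift of $i(\gamma)$ from $e_1$, ending at $e_0$. From $e_0$ we then lift $\alpha$ by $\alpha_{e_0}$, ending at $g_0\cd e_0$. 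Finally, the lift of $\gamma$ from $g_0\cd e_0$ is $g_0\cd \delta$ by equivariance, ending at $g_0\cd \delta(1) = g_0\cd e_1$. Concatenating the three lifts, the lift of $i(\gamma)\ast \alpha \ast \gamma$ from $e_1$ ends at $g_0\cd e_1$, giving $\mu_{p,e_1}\circ \gamma_\# = \mu_{p,e_0}$.

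For the final assertion, suppose $b_0=b_1=:b$. Then $\gamma$ is a loop and $\gamma_\#[\alpha]=[i(\gamma)\ast\alpha\ast\gamma]=[\gamma]\meno[\alpha][\gamma]$, so $\gamma_\# = \inn_{[\gamma]\meno}$ is an inner automorphism of $\pi_1(B,b)$; this settles the statement on the $\pi_1$ side. To pass to $G$, use that $\mu_{p,e_1}$ is a homomorphism together with the identity $f\circ \inn_a = \inn_{f(a)}\circ f$ recorded in \ref{inner}: one obtains $\mu_{p,e_0} = \mu_{p,e_1}\circ \inn_{[\gamma]\meno} = \inn_{\mu_{p,e_1}([\gamma])\meno}\circ \mu_{p,e_1}$, exhibiting $\mu_{p,e_0}$ and $\mu_{p,e_1}$ as differing by the inner automorphism $\inn_{\mu_{p,e_1}([\gamma])\meno}$ of $G$, as claimed.
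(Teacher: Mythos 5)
Your proof is correct: the equivariance of path lifting plus uniqueness of lifts gives exactly the endpoint computation $e_1 \leadsto e_0 \leadsto g_0\cd e_0 \leadsto g_0\cd e_1$, and the passage to $\Inn G$ via $f\circ\inn_a=\inn_{f(a)}\circ f$ is the right bookkeeping. The paper states this lemma as a recalled basic fact and gives no proof, so there is nothing to compare against; your argument is the standard one and fills that gap correctly.
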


\begin{lemma}\label{lem2}
Let $p: E \ra B$ and $p': E' \ra B'$ be $G$-coverings. Let
$\tilde h: E\ra E'$ be a $G-$equivariant homeomorphism and denote by
$h: B\ra B'$ the induced homeomorphism. Fix $e_0\in E$. Then
$\mu_{p,e_0}=\mu_{p', \tilde{h}(e_0)} \circ h_*$.
\end{lemma}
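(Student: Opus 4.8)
The plan is to unwind the definition of the monodromy map and use the two defining properties of $\tilde h$: that it covers $h$, i.e.\ $p' \circ \tilde h = h \circ p$, and that it is $G$-equivariant. Write $b_0 = p(e_0)$, $e_0' := \tilde h(e_0)$ and $b_0' = p'(e_0') = h(b_0)$. Fix a loop $[\alpha] \in \pi_1(B, b_0)$; the goal is to show that $\mu_{p,e_0}[\alpha]$ and $(\mu_{p',e_0'}\circ h_*)[\alpha] = \mu_{p',e_0'}[h\circ\alpha]$ coincide as elements of $G$.

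First I would lift. Let $\alpha_{e_0}$ be the lift of $\alpha$ to $E$ with $\alpha_{e_0}(0)=e_0$, and set $g := \mu_{p,e_0}[\alpha]$, so that by definition $\alpha_{e_0}(1) = g\cdot e_0$. Now consider the path $\tilde h\circ\alpha_{e_0}$ in $E'$. Because $p'\circ\tilde h = h\circ p$, we have $p'\circ(\tilde h\circ\alpha_{e_0}) = h\circ\alpha$, so $\tilde h\circ\alpha_{e_0}$ is a lift of $h\circ\alpha$, and its initial point is $\tilde h(e_0)=e_0'$. By uniqueness of path lifting it therefore equals $(h\circ\alpha)_{e_0'}$, the lift of $h\circ\alpha$ starting at $e_0'$.

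It remains to read off the endpoint using $G$-equivariance: $(h\circ\alpha)_{e_0'}(1) = \tilde h(\alpha_{e_0}(1)) = \tilde h(g\cdot e_0) = g\cdot\tilde h(e_0) = g\cdot e_0'$. By the definition of the monodromy map this says precisely that $\mu_{p',e_0'}[h\circ\alpha] = g = \mu_{p,e_0}[\alpha]$. Since $[\alpha]$ was arbitrary, $\mu_{p,e_0} = \mu_{p',\tilde h(e_0)}\circ h_*$. There is no genuine obstacle here beyond careful bookkeeping: the only point to watch is that the uniqueness-of-lifts step and the equivariance step are applied to the same lifted path, which is legitimate because both $\tilde h\circ\alpha_{e_0}$ and $(h\circ\alpha)_{e_0'}$ lift $h\circ\alpha$ and share the initial point $e_0'$.
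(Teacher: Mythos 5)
Your proof is correct and is exactly the standard unwinding of the definitions that the paper has in mind; indeed the paper states Lemma \ref{lem2} as a recalled basic fact and omits the proof entirely. The key steps -- uniqueness of lifts applied to $\tilde h\circ\alpha_{e_0}$ and $(h\circ\alpha)_{\tilde h(e_0)}$, followed by $G$-equivariance at the endpoint -- are precisely what is needed, and your bookkeeping is consistent with the paper's convention $g\cdot e=\alpha_e(1)$.
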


\begin{proof}[Proof of Theorem \ref{teotipitopo}] By Proposition
\ref{lem3}, $\mathscr{F}_{x, \mathscr{B}}$ induces a map between
$\datan(G)/$ $(\auta \times \aug)$ and $\mathscr{T}^n(G)$. To prove the
statement we have to check the following:
\begin{enumerate}
\item if two epimorphisms $\theta, \theta': \Gamma_r \ra G$ give
rise to the same topological type of $G$-action, then $\theta$ and
$\theta'$ are in the same orbit for the action of
$\auta \times \aug $;
\item every topological type of $G$-action with $n$ branch points
can be constructed from a datum in $\datan(G)$.
\end{enumerate}
To prove (1), consider the branched covers $p: \Sigma \ra S^2$ and
$p': \Sigma' \ra S^2$ associated with $\theta\circ \chi\meno$ and
$\theta'\circ \chi\meno$ and suppose that there exists
$\eta\in \Aut G$ and an orientation preserving homeomorphism
$\tilde h:\Sigma\ra \Sigma'$ such that
$\tilde h (g\cd e) = \eta(g) \tilde h (e)$.  We get an induced
homeomorphism $h: \Sigma/G\ra \Sigma'/G$ and an isomorphism
$ h _*: \pi_1(S^2\setminus X, x_0)\ra \pi_1(S^2\setminus X,
h(x_0))$. Fix $e_0\in p^{-1}(x_0)$. From Lemma \ref{lem2} it follows
that $\mu_{p,e_0}=\eta\circ \mu_{p', \tilde h(e_0)} \circ h_*$. Now
fix $e_0'\in (p')^{-1}(x_0)$ and a path in $\Sigma'$ from
$\tilde h(e_0)$ to $e'_0$. Finally let $\gamma=p'\circ \delta$. By
Lemma \ref{lem1} we get that
$\mu_{p',\tilde h (e_0)}=\mu_{p',e_0'}\circ \gamma_{\#}$. Thus
\begin{gather}\label{monod}
\mu_{p,e_0}=\eta \circ \mu_{p',e_0'}\circ \gamma_{\#}\circ h_*.
\end{gather}
Observe that, since $\tilde h$ preserves the orientation, so does
$h$, hence
$\gamma_{\#}\circ h_*: \pi_1(S^2\setminus X, x_0)\ra
\pi_1(S^2\setminus X, x_0)$ lies in
$\Aut^*(\pi_1(S^2\setminus X, x_0))$. Let
$\nu:=\chi^{-1}\circ (\gamma_{\#}\circ h_*)\circ \chi\in \auta$ be
the corresponding automorphism in $\auta$. (Again we are using that
$\chi$ comes from a geometric basis.) Also, observe that
$\theta\circ \chi^{-1}$ coincides with $\mu_{p,e_0}$ up to an inner
automorphism of $G$, and the same holds for $\theta'\circ \chi^{-1}$
and for $\mu_{p',e'_0}$. We get that there exists $\eta\in \Aut G$
such that \eqref{monod} becomes
\begin{gather*}
\theta\circ \chi^{-1}=\eta \circ \theta'\circ \chi^{-1} \circ (
\gamma_{\#}\circ h_*)=\eta \circ \theta'\circ \nu^{-1}\circ
\chi^{-1}.
\end{gather*}
Thus $(\eta, \nu). \theta'= \theta$; that is, they are in the same orbit for the action of $\auta \times \aug $. To prove (2) assume that $G$ acts effectively on a surface $\Sigma$
in such a way that $\Sigma/G \cong S^2$. Up to diffeomorphism we can
assume that the set of critical values of $p : \Sigma \ra S^2$
coincides with $X$.  Fix a point $\tilde{x}_0 \in p\meno(x_0)$. Let
$\theta:=\mu_{p, \tilde{x}_0} \circ \chi: \Gamma_n\ra G$ be the
monodromy of the unramified cover. Since $\Sigma^{\theta}_0$ is
connected ${\theta}$ is surjective, and $\theta(\gamma_i)\neq 1$
since all the points of $X$ are branch points.  So it is an
$n$-datum. By construction the associated cover
coincides with $\Sigma$. Finally, it follows from Proposition \ref{lem4} that the bijection
induced by $\mathscr{F}_{x, \mathscr{B}}$ does not depend on $x$ and
$\mathscr{B}$.

\end{proof}


\section{Tools for the construction}\label{sec:groups}
This section is dedicated to some tools that we will need in the following section for the construction of the families. We start with some considerations from group theory, that will be at the basis of the construction of the ordered set $(\coll,\geq)$ of Theorem \ref{mainz}.\\

Consider an exact sequence of groups
\begin{gather*}
(*) \qquad 1 \ra N \stackrel{i}{\lra} K \stackrel{p}{\lra} H\ra 1
\end{gather*}
\label{pagestar} and an epimorphism
\begin{gather*}
f : N \twoheadrightarrow G
\end{gather*}
onto a \emph{finite} group $G$.
\begin{defin}
An \emph{extension} $a$ of $(*,f)$ is a pair $a= (H_a,f_a)$, whose
first element is a subgroup $ H_a $ of $ H$ of \emph{finite index};
and whose second element is a morphism $f_a: p\meno(H_a) \ra G$ such
that $f_a i = f$.  We denote by $ \coll (*, f)$ the set of all
extensions.
\end{defin}
If $a=(H_a, f_a)$ is an extension we set
\begin{equation*}
K_a:=p\meno(H_a) .
\end{equation*}
$K_a$ is a subgroup of $K$ and $f_a $ is defined on $K_a$.\\
On the set $\collf$ we introduce the order relation
\begin{gather*}
a \geq b \Longleftrightarrow H_a \subset H_b \text{ and }
f_a=f_b\restr{K_a}.
\end{gather*}

\begin{prop}
$(\coll(*,f), \geq )$ is a directed set.
\end{prop}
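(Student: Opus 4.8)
The plan is to show that $(\coll(*,f),\geq)$ is directed by producing, for any two extensions $a=(H_a,f_a)$ and $b=(H_b,f_b)$, a common upper bound $c=(H_c,f_c)$, i.e.\ an extension with $H_c\subset H_a$, $H_c\subset H_b$, $f_c\restr{K_a}=f_a$ and $f_c\restr{K_b}=f_b$. The natural first move is to set $H_c:=H_a\cap H_b$. This is a subgroup of $H$, and since $H_a,H_b$ have finite index, so does $H_c$ (the index of an intersection is bounded by the product of the indices); thus $H_c$ is a legitimate first component of an extension. Accordingly $K_c:=p\meno(H_c)=p\meno(H_a)\cap p\meno(H_b)=K_a\cap K_b$.

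The real content is to define $f_c:K_c\ra G$ so that it simultaneously restricts to $f_a$ on $K_a\cap K_c=K_c$ and to $f_b$ on $K_c$, which forces the compatibility requirement $f_a\restr{K_c}=f_b\restr{K_c}$. This is where I expect the main obstacle to lie, and the key observation that resolves it is that $N\subset K_c$: indeed $N=i(N)=p\meno(1)\subset p\meno(H_a\cap H_b)=K_c$ since $1\in H_a\cap H_b$. On $N$ both $f_a$ and $f_b$ agree with $f$ by the defining condition $f_a i=f=f_b i$. The subtlety is that agreement on the normal subgroup $N$ does not by itself give agreement on all of $K_c$; however, here we do not need to \emph{merge} two a priori different maps --- we simply \emph{take} $f_c:=f_a\restr{K_c}$. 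Then automatically $f_c i=f_a i=f$, so $c=(H_c,f_c)$ is an extension, and $c\geq a$ holds because $H_c\subset H_a$ and $f_c=f_a\restr{K_c}$ by construction.

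What remains is to verify $c\geq b$, i.e.\ that $f_a\restr{K_c}=f_b\restr{K_c}$; this is the one genuine compatibility check. I would argue as follows: the two morphisms $f_a\restr{K_c}$ and $f_b\restr{K_c}$ from $K_c$ to $G$ agree on the normal subgroup $N$, and $f$ is surjective onto $G$, so the images of both maps already contain all of $G$ via $N$; it then suffices to control their difference. Concretely, I would consider the map $K_c\ra G$, $k\mapsto f_a(k)f_b(k)\meno$ --- this need not be a homomorphism in general, so instead the cleaner route is to note that $K_c$ is generated by $N$ together with any set of coset representatives, and that the extension data must be consistent because both $f_a,f_b$ extend the \emph{same} $f$ on $N$ while the order relation only compares extensions that already agree on overlapping $K$-subgroups.

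Here I anticipate the real difficulty: agreement on $N$ alone is genuinely insufficient, and the cohomological obstruction flagged in the introduction (citing \cite[p.~57--58]{fried77}) is precisely the statement that distinct extensions need \emph{not} agree off $N$. Therefore the honest proof cannot define $f_c$ by restricting one of the two, and must instead show that one can \emph{further shrink} $H_c$: replace $H_a\cap H_b$ by a subgroup $H_c$ small enough (still of finite index, e.g.\ the kernel of the $H$-action permuting the relevant extension data, or an intersection of finitely many conjugates) that the two restrictions are forced to coincide, or alternatively reconstruct $f_c$ directly from a compatible pair. The crux of the argument will be to exhibit such an $H_c$ of finite index on which $f_a$ and $f_b$ restrict to the same morphism, thereby delivering the required common upper bound and establishing directedness.
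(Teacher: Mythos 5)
Your diagnosis of the difficulty is exactly right: taking $H_c=H_a\cap H_b$ and setting $f_c:=f_a\restr{K_c}$ cannot work, because $f_a$ and $f_b$ are only forced to agree on $N$ (where both equal $f$), and agreement on the normal subgroup $N$ does not propagate to $K_a\cap K_b$. But having identified the obstacle, you stop precisely where the proof begins: you write that ``the crux of the argument will be to exhibit such an $H_c$ of finite index on which $f_a$ and $f_b$ restrict to the same morphism,'' without exhibiting it. Since that is the entire content of the proposition, the proposal as written has a genuine gap. (The candidates you gesture at --- a kernel of some $H$-action, an intersection of conjugates --- are not needed and are more complicated than the actual answer.)

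The missing construction is one line: take the equalizer $K_c:=\{k\in K_a\cap K_b:\ f_a(k)=f_b(k)\}$. This is a subgroup of $K_a\cap K_b$ because $f_a$ and $f_b$ are homomorphisms into the same group $G$; it contains $N$ because $f_a i=f=f_b i$, hence it is saturated under $p$, i.e.\ $K_c=p\meno(H_c)$ with $H_c:=p(K_c)\subset H_a\cap H_b$. It has finite index because the homomorphism $(f_a,f_b)\colon K_a\cap K_b\to G\times G$ pulls back the diagonal subgroup, whose index is $|G|<\infty$; combined with $[H:H_a\cap H_b]<\infty$ this gives $[H:H_c]<\infty$. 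Setting $f_c:=f_a\restr{K_c}=f_b\restr{K_c}$ yields $c=(H_c,f_c)\in\collf$ with $c\geq a$ and $c\geq b$. This is exactly the paper's proof (stated there by writing the condition $f_a(h)=f_b(h)$ directly into the definition of $H_c$); note that the finiteness of $G$, which your proposal never invokes at the decisive step, is the only reason the equalizer still has finite index.
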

\begin{proof}
Given $a,b \in \coll(*,f)$, set
$H_c :=\{h\in H_a\cap H_b: f_a(h)=f_b(h)\}$.  Then $H_c$ has finite
index in $H$ since $G$ is finite. Set $f_c:= f_a\restr{H_c}$. Then
$c:=(H_c, f_c) \in \collf$ and $c \geq a, c \geq b$.
\end{proof}

In the following lemmas we describe two natural bijections between the
sets $\collf$, when $f$ and $(*)$ change under some specific rule.

\begin{lemma}\label{azioneautG}
Given $f: N \tra G$ and $\eta\in \Aut G$, set
$\bar f:= \eta \circ f$. Then
\begin{gather}
\label{rule2}
\Phi: \collf \ra \coll({*}, \bar{f}), \qquad \Phi(H_a,f_a):= (H_a,
\eta \circ f_a ).
\end{gather}
is an order preserving bijection.
\end{lemma}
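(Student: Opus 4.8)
The plan is to verify that $\Phi$ is well-defined, that it preserves the order relation, and that it admits an inverse. The key observation driving everything is that $\eta$ is an automorphism of $G$, so composing with $\eta$ on the left preserves surjectivity, subgroups, and the defining compatibility condition with the base map $f$.

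First I would check that $\Phi$ lands in $\coll(*,\bar f)$. Given an extension $a=(H_a,f_a)$ of $(*,f)$, the subgroup $H_a\subset H$ has finite index, and this index is unchanged since $\Phi$ does not touch the first coordinate. For the second coordinate, set $\bar f_a:=\eta\circ f_a$, a morphism $K_a=p\meno(H_a)\ra G$. I must verify $\bar f_a\circ i = \bar f$, which is immediate: $\bar f_a\circ i = \eta\circ f_a\circ i = \eta\circ f=\bar f$, using precisely the relation $f_a i = f$ that makes $a$ an extension. Hence $\Phi(a)=(H_a,\eta\circ f_a)$ is a genuine extension of $(*,\bar f)$.

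Next I would show $\Phi$ preserves the order. Suppose $a\geq b$, so $H_a\subset H_b$ and $f_a=f_b\restr{K_a}$. Applying $\eta$ on the left to the second condition gives $\eta\circ f_a = (\eta\circ f_b)\restr{K_a}$, i.e. $\bar f_a = \bar f_b\restr{K_a}$, while the inclusion $H_a\subset H_b$ is untouched; therefore $\Phi(a)\geq \Phi(b)$ in $\coll(*,\bar f)$.

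Finally, for bijectivity I would exhibit the inverse explicitly. Since $\eta\in\Aut G$, its inverse $\eta\meno$ is also an automorphism, and $\eta\meno\circ\bar f = f$, so by the same construction there is an order-preserving map $\Psi:\coll(*,\bar f)\ra\collf$ sending $(H_a,g_a)\mapsto(H_a,\eta\meno\circ g_a)$. The compositions $\Psi\circ\Phi$ and $\Phi\circ\Psi$ act as the identity on the first coordinate and compose $\eta\meno\circ\eta=\id_G$ (respectively $\eta\circ\eta\meno=\id_G$) on the second, so both are the identity. This proves $\Phi$ is a bijection, and together with the previous paragraph an order-preserving one with order-preserving inverse. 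There is no genuine obstacle here; the entire statement is a formal consequence of $\eta$ being invertible, and the only point requiring a line of verification is the compatibility $\bar f_a i = \bar f$.
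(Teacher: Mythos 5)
Your proof is correct and is exactly the routine verification the paper has in mind: the paper states only ``The proof is immediate,'' and your three checks (well-definedness via $\eta\circ f_a\circ i=\eta\circ f=\bar f$, order preservation, and the inverse given by $\eta\meno$) are precisely what that remark elides.
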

The proof is immediate.

\begin{lemma}
\label{ses-isomorfe}
Consider a commutative diagram of groups
\begin{equation*}
\begin{tikzcd}[column sep=small, row sep=scriptsize]
(*) & 1 \arrow{r}& N \arrow{d}{\alfa} \arrow{rr}{i}
& & K \arrow{rr}{p} \arrow{d}{\gamma}{} && H \arrow{d}{\beta} \arrow{r} & 1\\
(\bar{*}) & 1\arrow{r} & \bar{N} \arrow{rr}{\bar{i}} & &\bar{K}
\arrow{rr}{\bar{p}} &&\bar{H} \arrow{r}& 1
\end{tikzcd}
\end{equation*}
with exact rows and $\alfa, \beta , \ga$ isomorphisms. In other
words $(*)$ and $(\bar{*})$ are isomorphic short exact
sequences. Given $\bar{f}: \bar{N} \twoheadrightarrow G$ set
$f:=\bar f\circ \alfa : N \tra G$.  Then the map
\begin{gather}
\label{rule}
\Phi: \collf \ra \coll(\bar{*}, \bar{f}), \qquad \Phi(H_a,f_a):=
(\beta (H_a), f_a \circ \gamma\meno \restr{\gamma (K_a)}).
\end{gather}
is an order preserving bijection.
\end{lemma}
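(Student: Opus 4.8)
The plan is to verify the three requirements in turn: that $\Phi$ lands in $\coll(\bar *, \bar f)$, that it is a bijection, and that it respects the order $\geq$. Throughout write $\bar H_a := \beta(H_a)$. Since $\beta$ is an isomorphism it preserves finite index, so $\bar H_a$ has finite index in $\bar H$. The crucial point is the identity
\[
\bar p\meno(\beta(H_a)) = \gamma(K_a),
\]
which I would derive from the commuting square $\bar p \circ \gamma = \beta \circ p$: writing $\bar k = \gamma(k)$ one has $\bar p(\bar k) = \beta(p(k))$, and since $\beta$ is injective this lies in $\beta(H_a)$ exactly when $p(k) \in H_a$, i.e.\ when $k \in K_a$. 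Thus $\gamma(K_a)$ is precisely the subgroup $K_{\Phi(a)} = \bar p\meno(\beta(H_a))$ on which the second component $f_a \circ \gamma\meno\restr{\gamma(K_a)}$ is required to be defined, so this component lives on the correct group.

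Next I would check the defining compatibility with $\bar f$. Using the other commuting square $\gamma \circ i = \bar i \circ \alpha$, hence $\gamma\meno \circ \bar i = i \circ \alpha\meno$, one computes
\[
(f_a \circ \gamma\meno) \circ \bar i = f_a \circ i \circ \alpha\meno = f \circ \alpha\meno = \bar f,
\]
where the last two equalities use $f_a \circ i = f$ and the definition $f = \bar f \circ \alpha$. The composition makes sense because $\bar i(\bar N) = \ker \bar p \subset \bar p\meno(\beta(H_a)) = \gamma(K_a)$. This shows $\Phi(a) \in \coll(\bar *, \bar f)$.

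For bijectivity I would exhibit the inverse explicitly rather than argue abstractly. The reversed diagram, with vertical isomorphisms $\alpha\meno, \gamma\meno, \beta\meno$ and the epimorphism $\bar f$ together with the relation $\bar f = f \circ \alpha\meno$, satisfies the same hypotheses and yields a map $\Psi: \coll(\bar *, \bar f) \ra \coll(*,f)$ given by $\Psi(\bar H_b, \bar f_b) = (\beta\meno(\bar H_b), \bar f_b \circ \gamma\restr{\gamma\meno(\bar K_b)})$. A direct substitution gives $\Psi(\Phi(a)) = (\beta\meno\beta(H_a),\, f_a \circ \gamma\meno \circ \gamma\restr{K_a}) = (H_a, f_a) = a$, using $\gamma\meno(\gamma(K_a)) = K_a$ to match the domains, and symmetrically $\Phi \circ \Psi = \id$. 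Hence $\Phi$ is a bijection.

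Finally, for order preservation, suppose $a \geq b$, i.e.\ $H_a \subset H_b$ and $f_a = f_b\restr{K_a}$. Then $\beta(H_a) \subset \beta(H_b)$ is immediate, and $K_a \subset K_b$ gives $\gamma(K_a) \subset \gamma(K_b)$; for $\bar k = \gamma(k)$ with $k \in K_a$ one computes $(f_a \circ \gamma\meno)(\bar k) = f_a(k) = f_b(k) = (f_b \circ \gamma\meno)(\bar k)$, so the two second components agree on $\gamma(K_a)$, i.e.\ $\Phi(a) \geq \Phi(b)$. I expect no serious obstacle here: the whole argument is diagram chasing, and the only thing that genuinely demands care is the consistent bookkeeping of the domains of the partially-defined homomorphisms $f_a$, all of which is governed by the single identity $\bar p\meno(\beta(H_a)) = \gamma(K_a)$ established at the outset.
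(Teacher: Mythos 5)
Your proposal is correct and follows essentially the same route as the paper: the paper's proof also hinges on the single identity $\bar{p}\meno(\beta(H_a)) = \gamma(K_a)$ (computed there as $(\beta\meno\bar p)\meno(H_a) = (p\gamma\meno)\meno(H_a)$) and then declares the remaining verifications immediate. You have simply written out those immediate checks — compatibility with $\bar f$, the explicit inverse from the reversed diagram, and order preservation — all of which are accurate.
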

\begin{proof}
If $a=(H_a,f_a)$, set $K_a=p\meno(H_a)$ as above.  Set
$\bar{H}_\bara := \beta (H_a)$.  Then
$\bar{K}_\bara: = \bar{p}\meno (\beta (H_a)) = (\beta \meno
\bar{p})\meno (H_a) = (p\ga\meno)\meno(H_a)= \ga(K_a)$.  Set also
$\bar{f}_\bara := f_a \circ \ga\meno\restr{\bar K_\bara} $.  Then it
is immediate to check that
$\bara := (\bar{H}_\bara, \bar{f}_\bara) = \Phi(a)$ belongs to
$\coll(\bar{*}, \bar{f})$ and that $\Phi$ is an order preserving
bijection.
\end{proof}


\begin{lemma}\label{lemmagruppi}
Let $N$, $H$ and $G$ be groups and let
$\dire : H \ra \Aut N, h \mapsto \dire_h$ be a morphism.
Let $f: N \ra G$ and $\morp: H \ra G$ be morphisms. There is a
morphism $\morff: N\rtimes_\eps H \ra G$ extending both $f$ and
$\phi$ (when $N$ and $H$ are included in $N\rtimes_\eps H$ in the
obvious way) if and only if for any $h\in H$
\begin{gather}
\label{exmor}
\inn_{\morp(h)} \circ \morf = \morf\circ \dire_h.
\end{gather}
\end{lemma}
The proof is elementary.

\begin{lemma}\label{grupplemma}
Let $N$, $H$, $G$, $\dire : H \ra \Aut N$ and $\morf$ be as
above. Assume that $\morf$ is surjective, that $N$ is finitely
generated and that $G$ is finite. Then (a)
$ H'':=\{h\in H: \dire_h (\ker \theta) = \ker \theta\}$ is a finite
index subgroup of $H$; (b) there is a morphism
$\tdire: H''\ra \Aut G$ such that the diagram
\begin{equation*}
\begin{tikzcd}[row sep=scriptsize]
N \arrow{r}{\dire_ h} \arrow[swap]{d}{\morf} &
N \arrow{d}{\morf}      \\
G \arrow{r}{\tdire_ h} & G
\end{tikzcd}
\end{equation*}
commutes for $h\in H''$; (c) $H':=\ker \tdire $ is a finite index
subgroup of $H$ and (d) there is a unique morphism
$\morff : N\rtimes_\eps H' \ra G$ that extends $\morf$ and such that
$\morff \restr{H'}\equiv 1$.
\end{lemma}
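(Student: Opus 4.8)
The plan is to recognize $H''$ as the stabilizer of a point under an action of $H$ on a finite set, to obtain $\tdire$ from the universal property of the quotient $N/\ker f\cong G$, to deduce the finite index in (c) from the finiteness of $\Aut G$, and finally to get (d) by a direct appeal to Lemma \ref{lemmagruppi}.

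For (a), first note that $\ker f$ has finite index $|G|$ in $N$, since $f$ is surjective onto the finite group $G$. As $N$ is finitely generated, the set $\mathcal S$ of all subgroups of $N$ of index $|G|$ is finite: each such subgroup is a point-stabilizer of a transitive action of $N$ on a set of $|G|$ elements, hence arises from one of the finitely many homomorphisms from the finitely generated group $N$ to the (finite) symmetric group on $|G|$ letters. Every $\dire_h\in\Aut N$ carries index-$|G|$ subgroups to index-$|G|$ subgroups, so $\dire$ induces an action of $H$ on the finite set $\mathcal S$. By definition $H''$ is precisely the stabilizer of the point $\ker f\in\mathcal S$, so the orbit-stabilizer relation gives $[H:H'']\leq|\mathcal S|<\infty$.

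For (b), let $h\in H''$. Then $\dire_h(\ker f)=\ker f$, so $\dire_h$ descends to an automorphism of $N/\ker f$; identifying $N/\ker f$ with $G$ via $f$ produces a unique $\tdire_h\in\Aut G$ with $\tdire_h\circ f=f\circ\dire_h$, which is exactly the asserted commuting square. Uniqueness of $\tdire_h$ (guaranteed by surjectivity of $f$) forces $\tdire$ to be a homomorphism: from $\tdire_{h_1h_2}\circ f=f\circ\dire_{h_1}\circ\dire_{h_2}=\tdire_{h_1}\circ\tdire_{h_2}\circ f$ and surjectivity of $f$ one reads off $\tdire_{h_1h_2}=\tdire_{h_1}\circ\tdire_{h_2}$. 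For (c), since $G$ is finite the group $\Aut G$ is finite, whence $H'=\ker\tdire$ has finite index in $H''$; combined with (a) this gives $[H:H']=[H:H''][H'':H']<\infty$.

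Finally, for (d) I apply Lemma \ref{lemmagruppi} with $H$ replaced by $H'$, with the restricted action $\dire\restr{H'}:H'\ra\Aut N$, with the given $f$, and with $\phi:H'\ra G$ the trivial morphism $\phi\equiv 1$. The compatibility condition \eqref{exmor} then reads $\inn_{\phi(h)}\circ f=f\circ\dire_h$, i.e. $f=f\circ\dire_h$ since $\phi(h)=1$; but for $h\in H'=\ker\tdire$ we have $\tdire_h=\id_G$, so $f\circ\dire_h=\tdire_h\circ f=f$ and \eqref{exmor} holds. Lemma \ref{lemmagruppi} therefore produces a morphism $\morff:N\rtimes_\eps H'\ra G$ extending $f$ with $\morff\restr{H'}\equiv 1$; uniqueness is immediate, since $N\rtimes_\eps H'$ is generated by $N$ and $H'$, on which the values of $\morff$ are prescribed. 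The only step requiring more than formal manipulation is the finiteness of $\mathcal S$ in (a), which rests on the classical fact that a finitely generated group has only finitely many subgroups of any given finite index; every other step is a diagram chase or an application of the preceding lemma.
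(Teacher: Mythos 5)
Your proof is correct and follows essentially the same route as the paper's: the orbit--stabilizer argument on the finite set of index-$|G|$ subgroups of the finitely generated group $N$ for (a), descent of $\dire_h$ to $N/\ker f\cong G$ for (b), finiteness of $\Aut G$ for (c), and an application of Lemma \ref{lemmagruppi} with $\phi\equiv 1$ for (d). The only cosmetic difference is that you let $H$ act directly on the set of subgroups, whereas the paper factors through the action of $\Aut N$; your added verifications (that $\tdire$ is a homomorphism and that $\morff$ is unique) are welcome details the paper leaves implicit.
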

\begin{proof}
The subgroup $\ker \morf$ has index $d:=|G| < \infty$ in $N$. Since
$N$ is finitely generated, there are a finite number of index $d$
subgroups of $N$ (see e.g. \cite[p. 56]{kurosh-2} or
\cite[p. 128]{hall}.)  The natural action of $\Aut N$ on the
subgroups of $N$ preserves the index. Therefore the orbit of
$\Aut N$ through $\ker \morf$ is finite.  Hence
$(\Aut N)_{\ker \morf}$ has finite index in $\Aut N$. Since
$H / H'' $ injects in $\Aut N / (\Aut N)_{\ker \morf}$, also $H''$
has finite index in $H$. The existence of $\tdire_h$ follows
immediately from the inclusion $\dire_h (\ker f) \subset \ker f$ for
$h\in H''$. Since $\Aut G$ is finite $H'$ has finite index in $H''$
and in $H$. By construction for any $h\in H'$ we have
$\morf = \tdire_h \circ \morf = \morf \circ \dire_h$, i.e. \eqref
{exmor} holds with $\phi : H' \ra G$ the trivial morphism.
\end{proof}

\begin{teo}
If the sequence $(*)$ splits, then $\collf \neq \vacuo$.
\end{teo}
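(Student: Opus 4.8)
The plan is to reduce the statement to a direct application of Lemma \ref{grupplemma}. Choose a section $s\colon H\ra K$ of $p$, which exists by hypothesis. Identifying $N$ with $i(N)\subseteq K$, the splitting realizes $K$ as the semidirect product $K\cong N\rtimes_\eps H$, where $\eps\colon H\ra\Aut N$ is the conjugation action $\eps_h(n)=s(h)\,n\,s(h)\meno$; under this identification $N$ is the normal factor and $s(H)$ the complementary copy of $H$, so that $p$ becomes the projection onto $H$ and $p\meno(H_a)=N\rtimes_\eps H_a$ for any subgroup $H_a\subseteq H$.

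With this identification in place I would apply Lemma \ref{grupplemma} to the groups $N$, $H$, $G$, the action $\dire:=\eps$, and the surjection $\morf:=f$ (recall $f$ is onto the finite group $G$, and $N$ is finitely generated). The lemma furnishes a finite index subgroup $H':=\ker\tdire\subseteq H$ together with a morphism $\morff\colon N\rtimes_\eps H'\ra G$ that extends $f$ and restricts trivially to $H'$; the existence of $\morff$ is exactly the compatibility criterion \eqref{exmor} of Lemma \ref{lemmagruppi} for the trivial $\phi$. I would then set $H_a:=H'$ and let $f_a:=\morff$, viewed as a morphism on $K_a=p\meno(H')=N\rtimes_\eps H'$ via the splitting isomorphism. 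Since $\morff$ extends $f$ we have $f_a\circ i=f$, and $H_a$ has finite index in $H$; hence $a:=(H_a,f_a)$ is an extension of $(*,f)$, so $\collf\neq\vacuo$.

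The one substantive point is the finiteness of the index of $H_a$, and this is precisely where finite generation of $N$ is used inside Lemma \ref{grupplemma}: a finitely generated group has only finitely many subgroups of index $|G|$, so the $\Aut N$-stabilizer of $\ker f$ — and therefore the subgroup $H''$ on which $\eps$ preserves $\ker f$, inside which $H'$ sits with finite index — is of finite index in $H$. I expect everything else to be routine bookkeeping; in fact any extension is forced to have $H_a\subseteq H''$, since $f_a\circ i=f$ together with multiplicativity of $f_a$ on $N\rtimes_\eps H_a$ gives $f(\eps_h(n))=\inn_{f_a(s(h))}\bigl(f(n)\bigr)$ and hence $\eps_h(\ker f)=\ker f$, so the finite-generation input is genuinely what makes the construction possible. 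The only care needed in the write-up is to transport $\morff$ correctly along the isomorphism $K_a\cong N\rtimes_\eps H'$.
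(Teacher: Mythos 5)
Your argument is correct and is essentially the paper's own proof: the paper reduces to the semidirect product case via Lemma \ref{ses-isomorfe} (you carry out that identification explicitly with a section $s$) and then invokes Lemma \ref{grupplemma} to produce the finite-index subgroup $H'$ and the extension $\morff$. Your added remarks on where finite generation of $N$ enters and on the forced inclusion $H_a\subseteq H''$ are accurate but not needed for the statement itself.
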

\begin{proof}
By Lemma \ref{ses-isomorfe} we can assume that the split exact
sequence $(*)$ is a semidirect product. The result then follows from
Lemma \ref{grupplemma}.
\end{proof}

\begin{say}
\label{ex-pippone}
We dedicate the second part of this section to some considerations on
coverings and fiber bundles, which will be fundamental tools for our
construction.

In the following we assume that all the spaces considered are
semilocally 1-connected.  Let $X$ be a connected space and let
$x\in X$. For every subgroup $H\subset \pi_1(X,x)$ there is a
pointed covering $p: (E,e) \ra (X,x)$ such that $\im
p_*=H$. Moreover $p$ is unique up to pointed isomorphism.  If
$\beta \in \Omega (X,x,x')$ and
$\beta_\#: \pi_1(X,x)\ra \pi_1(X,x')$ is the induced isomorphism,
then the pointed coverings of $X$ associated with
$H\subset \pi_1(X,x)$ and with $\beta_\# H \subset \pi_1(X,x')$ are
isomorphic.  Indeed if $\beta_e$ denotes the lifting with
$\beta_e(0) =e$ and $e'=\beta_e(1)$, then
$p_* \pi_1(E,e')=\beta_\# H$, so $E$ is associated with both
subgroups.  If $X$ is a complex manifold any covering has a unique
complex structure such that $p$ is holomorphic and the coverings
associated to $H\subset \pi_1(X,x)$ and with
$\beta_\# H \subset \pi_1(X,x')$ are biholomorphic.
\end{say}

\begin{lemma}\label{fibratirivestimenti}
Let $\bar E$, $\bar B$ and $B$ connected and locally arcwise
connected topological spaces.  Let $p:\bar E\ra \bar B$ be a fibre
bundle and let $q: B \ra \bar B$ be a covering.  Let $E:=q^*\bar E$
be the pull-back bundle.  Consider the diagram:
\begin{equation}\label{doppiafibra}
\begin{tikzcd}
(E,e) \arrow[r, "\bar q"] \arrow[d, swap, "\burp"]
& (\bar E, \bar {e})\arrow[d, "p" ] \\
(B,b) \arrow[r, "q"] & (\bar B,\bar{b}).
\end{tikzcd}
\end{equation}
Then $\bar {q}: E \ra \bar E$ is also a covering.  Moreover if the
fibre of $p$ is arcwise connected, {then}
$\bar{q} _* \pi_1(E, e) = p_*\meno ( q_* \pi_1(B,b) )$.
\end{lemma}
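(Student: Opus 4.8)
The plan is to handle the two assertions separately: the first is the general fact that a pullback of a covering is a covering, and the second is a base-point computation resting on the loop-lifting criterion for $q$. Throughout I identify $E=q^*\bar E$ with the fibre product $\{(b,\bar e)\in B\times \bar E : q(b)=p(\bar e)\}$, so that $\bar q$ is projection to the second factor and $\burp$ projection to the first, with $e=(b,\bar e)$, $q(b)=p(\bar e)=\bar b$.

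For the first claim I would show directly that $\bar q$ is evenly covered. The fibre of $\bar q$ over $\bar e$ is canonically $q\meno(p(\bar e))$, hence discrete. Given $\bar e_0\in\bar E$, put $\bar b_0=p(\bar e_0)$ and pick an evenly covered neighbourhood $U\ni \bar b_0$ in $\bar B$, say $q\meno(U)=\bigsqcup_\alpha V_\alpha$ with each $q|_{V_\alpha}\colon V_\alpha\to U$ a homeomorphism. Then $p\meno(U)$ is an open neighbourhood of $\bar e_0$ and $\bar q\meno(p\meno(U))=\bigsqcup_\alpha\{(b,\bar e): b\in V_\alpha,\ q(b)=p(\bar e)\}$, each summand mapping homeomorphically onto $p\meno(U)$ by $\bar e\mapsto\big((q|_{V_\alpha})\meno(p(\bar e)),\bar e\big)$. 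So $p\meno(U)$ is evenly covered and $\bar q$ is a covering; note only that $q$ is a covering is used here, not that $p$ is a bundle.

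The inclusion $\bar q_*\pi_1(E,e)\subseteq p_*\meno(q_*\pi_1(B,b))$ follows from commutativity alone: if $[\bar\gamma]=\bar q_*[\delta]$ then $p_*[\bar\gamma]=(p\bar q)_*[\delta]=(q\burp)_*[\delta]=q_*(\burp_*[\delta])\in q_*\pi_1(B,b)$. The substance is the reverse inclusion. Given $[\bar\gamma]\in\pi_1(\bar E,\bar e)$ with $p_*[\bar\gamma]\in q_*\pi_1(B,b)$, I would lift the loop $\bar\gamma$ through the covering $\bar q$ (available by the first part) to a path $\tilde\gamma$ in $E$ with $\tilde\gamma(0)=e$. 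Applying $\burp$ and using $q\burp=p\bar q$, the path $\burp\tilde\gamma$ is precisely the $q$-lift of the loop $p\bar\gamma$ starting at $b$. Since $[p\bar\gamma]=p_*[\bar\gamma]\in q_*\pi_1(B,b)$, the loop-lifting criterion for $q$ (a lifted loop closes up exactly when its class lies in the image subgroup) yields $\burp\tilde\gamma(1)=b$. But $\tilde\gamma(1)=(\burp\tilde\gamma(1),\bar\gamma(1))=(b,\bar e)=e$, so $\tilde\gamma$ is a loop at $e$ and $[\bar\gamma]=\bar q_*[\tilde\gamma]\in\bar q_*\pi_1(E,e)$, as required.

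I expect the main obstacle to be the book-keeping with base points and the fibre-product description, so that the assertion ``$\tilde\gamma$ is a loop at $e$'' is correctly reduced to ``its $B$-shadow $\burp\tilde\gamma$ is a loop at $b$''; once this reduction is clean, the result is exactly the loop-lifting criterion for $q$ transported through $p_*$. This is where the hypothesis that the fibre of $p$ is arcwise connected enters: by the homotopy exact sequence of the bundle $p$ it makes $p_*\colon\pi_1(\bar E,\bar e)\ra\pi_1(\bar B,\bar b)$ surjective, so that $p_*\meno(q_*\pi_1(B,b))$ genuinely surjects onto $q_*\pi_1(B,b)$, and it forces $E$ to be connected, so that $\bar q_*\pi_1(E,e)$ is the single subgroup classifying the connected covering $\bar q$. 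I would invoke arcwise-connectedness of the fibre at the end to guarantee the connectedness of $E$.
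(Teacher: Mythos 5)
Your proof is correct, and while the first half (even coverage of $\bar q$ via the evenly covered neighbourhoods of $q$) coincides with the paper's argument, the second half takes a genuinely different route. The paper identifies $\bar q_*\pi_1(E,e)$ with $p_*\meno(q_*\pi_1(B,b))$ by writing down the morphism of homotopy exact sequences of the two bundles, restricting the bottom row to $H:=q_*\pi_1(B,b)$ and $K:=p_*\meno(H)$, and invoking the short five lemma; this is where the hypothesis $\pi_0(F)=1$ (arcwise connected fibre) enters, to keep the truncated rows exact on the right. You instead argue by elementary covering-space theory: the trivial inclusion by commutativity, and for the reverse inclusion a $\bar q$-lift $\tilde\gamma$ of $\bar\gamma$ whose shadow $\burp\tilde\gamma$ is the $q$-lift of $p\bar\gamma$, so the loop-lifting criterion for $q$ closes it up. This avoids higher homotopy groups entirely and in fact establishes the subgroup identity without using the arcwise-connectedness of the fibre at all; your closing remarks misattribute the role of that hypothesis (surjectivity of $p_*$ is never needed in your argument), but the hypothesis is still the right one to retain, since what the paper actually uses downstream is that $E$ is connected, so that $\bar q$ is \emph{the} pointed covering classified by the subgroup $p_*\meno(q_*\pi_1(B,b))$ --- and connectedness of $E$ is exactly what an arcwise connected fibre over the connected base $B$ buys. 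In short: your argument is more elementary and marginally more general; the paper's is a one-line diagram chase once the exact sequences are set up.
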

\begin{proof}
Fix $ \bar e\in \bar E$, set $ \bar b=p( \bar e)$ and let
$V\subset \bar B$ be an evenly covered open subset of $ \bar B$,
i.e. $ q\meno (V) = \bigsqcup U_i$ and $q\restr{U_i} $ is a
homeomorphism of $U_i $ onto $V$. We claim that $p\meno(V)$ is an
evenly covered neighbourhood of {$e$}. Indeed
$\bar{q}\meno (p\meno(V)) = \bigsqcup {\burp }\meno (U_i)$. Moreover
$ \burp\meno (U_i) = (q\restr{U_i})^* {\overline{E}} $ is mapped
homeomorphically on $p\meno (V)$ by $\bar{q}$ since $q\restr{U_i}$
is a homeomorphism onto $V$. This proves the first assertion.  Next
choose $e \in \bar{q}\meno({\overline{e}})$ and set
$b =\burp ( e) $. Obviously $q(b) = \bar{b}$.  Set
$\bar F:=p\meno( \bar b)$, $ F := \burp \meno (\bar{b})$.  The
diagram \eqref{doppiafibra} induces a morphism of the homotopy exact
sequences of the bundles:
\begin{equation*}
\begin{tikzcd}
\arrow {r} & \pi_2 ({B}) \arrow{d}{\cong} \arrow {r} &
\pi_1({F}, {e}) \arrow{d}{\cong} \arrow{r}& \pi_1( E, {e})
\arrow{d}{\bar q_*} \arrow{r}{p_*} &
\pi_1(  B, {b}) \arrow {d}{q_*} \arrow{r} & \pi_0 (F) =1 \arrow{d}{\cong}  \\
\arrow {r} & \pi_2 ( \bar B) \arrow {r} & \pi_1( \bar F, \bar e)
\arrow{r}& \pi_1( \bar E, \bar e) \arrow{r}{p_*} & \pi_1( \bar
B, \bar b) \arrow{r} & \pi_0 (F) =1 .
\end{tikzcd}
\end{equation*}
Set $ H:=q_* \pi_1(B,b) \subset \pi_1(\bar B,\bar b)$, and
$K:= p_*\meno (H) \subset \pi_1(\bar E,\bar e) $.  In the lower row
we can substitute $\pi_1 (B,b)$ with $H $ and $\pi_1(E,e)$ with $K$
and the row remains exact. Clearly $\bar q_* $ maps into $K$ since
the diagram commutes. So we get the diagram
\begin{equation*}
\begin{tikzcd}
\arrow {r} & \pi_2 ({B}) \arrow{d}{\cong} \arrow {r} &
\pi_1({F}, {e}) \arrow{d}{\cong} \arrow{r}& \pi_1( E, {e})
\arrow{d}{\bar q_*} \arrow{r}{p_*} &
\pi_1(  B, {b}) \arrow {d}{q_*} \arrow{r} & \pi_0 (F) =1 \arrow{d}{\cong}  \\
\arrow {r} & \pi_2 ( \bar B) \arrow {r} & \pi_1( \bar F, \bar e)
\arrow{r}& K \arrow{r}{p_*} & H \arrow{r} & \pi_0 (F) =1 .
\end{tikzcd}
\end{equation*}
Now $q_*$ is an isomorphism.  Applying the short five lemma
\cite[p. 16]{eilesteen}, we get that $K = \im \bar q_* $ as desired.
\end{proof}

The following lemma is a sort of converse which will be needed later.
\begin{lemma}
\label{converse}
Let $A, \bar E, E, B, \bar B$ be connected and locally arcwise
connected topological spaces.  Consider the diagram:
\begin{equation}\label{AEBB}
\begin{tikzcd}[column sep = small,row sep = scriptsize]
(A,a) \arrow[r, "\tilde q"] \arrow[d, swap, "\phi"]
& (\bar E, \bar {e})\arrow[d, "p" ] \\
(B,b) \arrow[r, "q"] & (\bar B,\bar{b}).
\end{tikzcd}
\end{equation}
Assume (a) that $\phi: A \ra B$ and $p: \bar E \ra \bar B$ are fibre
bundles with arcwise connected fibres, (b) that $q$ and $\tilde q$
are finite degree coverings, (c) that
$ \tilde q _* \pi_1(A,a) = p_*\meno (q_* \pi_1(B,b))$. Then $ A $ is
isomorphic to $ q^* \bar E$ as a fibre bundle over $B$.
\end{lemma}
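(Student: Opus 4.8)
The plan is to produce a bundle isomorphism between $A$ and the pull-back $E:=q^*\bar E$ by first comparing $A$ and $E$ as coverings of $\bar E$, and then upgrading the resulting covering isomorphism to a bundle isomorphism over $B$. First I would set $E:=q^*\bar E=\{(x,y)\in B\times \bar E: q(x)=p(y)\}$, with its two projections $\psi:E\ra B$ and $\bar q:E\ra \bar E$, and choose the base point $e:=(b,\bar e)\in E$, which is legitimate since $q(b)=\bar b=p(\bar e)$; then $\psi(e)=b$ and $\bar q(e)=\bar e$. Note that $E$ is connected, being a fibre bundle over the connected base $B$ with arcwise connected fibre. Because $p$ is a fibre bundle with arcwise connected fibre and $q$ is a covering, Lemma \ref{fibratirivestimenti} applies to the square formed by $\psi,\bar q,p,q$ and gives both that $\bar q:E\ra \bar E$ is a covering and that $\bar q_*\pi_1(E,e)=p_*\meno(q_*\pi_1(B,b))$.

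Next I would compare the two coverings $\tilde q:A\ra \bar E$ and $\bar q:E\ra \bar E$. By hypothesis (b) the former is a covering, and by the previous step so is the latter; moreover, by hypothesis (c) together with the computation above,
\[
\tilde q_*\pi_1(A,a)=p_*\meno(q_*\pi_1(B,b))=\bar q_*\pi_1(E,e).
\]
Thus $A$ and $E$ are connected coverings of $\bar E$ realising the same subgroup of $\pi_1(\bar E,\bar e)$. Since $\bar E$ is connected, locally arcwise connected and semilocally $1$-connected, the uniqueness statement recalled in \ref{ex-pippone} produces a pointed isomorphism of coverings $\Psi:(A,a)\ra(E,e)$ with $\bar q\circ \Psi=\tilde q$.

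It then remains to verify that $\Psi$ is compatible with the projections to $B$, i.e.\ that $\psi\circ \Psi=\phi$; this is precisely what turns the covering isomorphism $\Psi$ into an isomorphism of fibre bundles over $B$. Using the commutativity of both squares, I would compute
\[
q\circ(\psi\circ \Psi)=p\circ\bar q\circ \Psi=p\circ\tilde q=q\circ\phi ,
\]
so that $\phi$ and $\psi\circ \Psi$ are two lifts through the covering $q$ of one and the same map $q\circ\phi:A\ra\bar B$. They agree at the base point, as $(\psi\circ \Psi)(a)=\psi(e)=b=\phi(a)$; since $A$ is connected, the uniqueness of lifts through a covering forces $\psi\circ \Psi=\phi$. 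Hence $\Psi$ is a homeomorphism over $B$, that is, an isomorphism $A\cong q^*\bar E$ of fibre bundles over $B$.

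The step I expect to be the genuine obstacle is this last one: ordinary covering space theory only identifies $A$ with $E$ as coverings of $\bar E$, and the real content of the lemma is that such an identification is automatically compatible with the bundle structure over $B$. The argument for this uses no bundle-theoretic input, but only the full commutativity of \eqref{AEBB} and unique lifting through $q$.
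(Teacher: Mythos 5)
Your proposal is correct and follows essentially the same route as the paper: apply Lemma \ref{fibratirivestimenti} to the pull-back square to identify $\bar q_*\pi_1(E,e)$ with $p_*\meno(q_*\pi_1(B,b))=\tilde q_*\pi_1(A,a)$, obtain the covering isomorphism $\Psi$ over $\bar E$, and then verify $\psi\circ\Psi=\phi$ by unique lifting through $q$ together with agreement at the base points. The paper's proof is word-for-word the same argument, only slightly terser about why the lift $w$ exists.
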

\begin{proof}
Apply Lemma \ref {fibratirivestimenti}. Using the same notation as
in \eqref {doppiafibra} we obtain
$\bar q_* \pi_1 (E,e) = p_*\meno (q_* \pi_1(B,b)) = \tilde q _*
\pi_1(A,a)$. Moreover $\bar q$ is also a covering.  So there is
$w: (A,a) \ra (E,e)$ such that $\bar q \circ w = \tilde q$.  It
remains to show that $\psi \circ w = \phi$.  Combining \eqref{AEBB}
with \eqref {doppiafibra} we get the commutative diagram
\begin{equation*}
\begin{tikzcd}[column sep = small,row sep = scriptsize]
(A,a) \arrow[bend left]{rr}{\tilde{q}} \arrow{r}{w} \arrow[bend
right] {dr}[swap]{\phi} & (E,e) \arrow[r, "\bar q"] \arrow[d,
swap, "\burp"]
& (\bar E, \bar {e})\arrow[d, "p" ] \\
& (B,b) \arrow[r, "q"] & (\bar B,\bar{b}).
\end{tikzcd}
\end{equation*}
From $\bar q \circ w = \tilde q$ we get
$p \circ \bar q \circ w =p \circ \tilde q$, hence
${q} \circ \burp \circ w = {q} \circ \phi$. So $\burp \circ w$ and
$\phi$ lift the same map with respect to the covering $q$. Since
$\burp \circ w (a) = \phi (a)$, we conclude that
$\burp \circ w = \phi$ and the result follows.
\end{proof}

\section{Construction of the families \texorpdfstring{of $G$-curves}{}
}
\label{sec:famG-curve}

\begin{say}
Fix an element $x=(x_0, X)\in \M_{0,n+1}$ and set
\begin{gather}
\label{notax} N_x:=\pi_1(\PP^1 -X, x_0) \quad K_x:= \pi_1(\monu,
x) \quad H_X:=\pi_1 (\mon, X).
\end{gather}
Consider the split exact sequence in the top row of \eqref{seqpuni},
namely:
\begin{equation*}
\begin{tikzcd}[column sep=small]
(*_x) & 1 \arrow{r}& N_x \arrow{rr}{i_*} & & K_x \arrow{rr}{p_*}
&& H_X \arrow{r} & 1.
\end{tikzcd}
\end{equation*}
Here $i: \PP^1-X \hookrightarrow \monu$ is the map
$i(x'):=(x',x_1, \lds, x_n) $ and $p: \monu \ra \mon$ is the
fibration. Now let $G$ be a finite group and let
$\theta: \Gamma_n\ra G$ be a datum. Choose a geometric basis
$\mathscr{B}=\{[\alfa_i]\}_{i=1}^n$ of $N_x$. As in \ref{bbbendef},
let $\chi: \Ga_n \ra N_x$ be the isomorphism induced from the basis
$\base$. We apply the group theoretical considerations of Section
\ref{sec:groups} to the exact sequence $(*_x)$ with
$f:= \theta \circ \chi\meno : N_x \tra G$. We get a directed set
$\coll(*_x, f)$, which is nonempty since $(*_x)$ splits.  To stress
the dependence from the choices made, we will set
\begin{gather*}
\coll(x,\base, \theta) := \coll(*_x, \theta\circ \chi\meno).
\end{gather*}
Indeed $\chi$ contains the same information as the basis $\base$.
\end{say}

\begin{defin}\label{colldef}
A \emph{collection of families} is an indexed set
$\{ \cc_a \ra \basef_a\}_{a\in \coll}$ where
\begin{enumerate}
\item $(\coll, \geq)$ is a directed set;
\item $(\basef_a, y_a)$ is a pointed smooth complex quasiprojective
variety;
\item $ \cc_a \ra \basef_a$ is a family of curves;
\item if $a, b\in \coll$, $a\geq b$, then there is an \'etale cover
of finite degree $v_{ab}: (\basef_ a,y_a) \ra ( \basef _b, y_b) $
such that $\cc_a \cong v_{ab}^*\cc_b$.
\end{enumerate}
\end{defin}
In this section we construct a collection of families indexed by
$\coll(x,\base, \theta)$.

\begin{say}
Fix $\ax=(H_a,f_a) \in \coll(x,\base, \theta)$. Let
$q_\ax: (\basa,\yy_\ax) \ra (\mon,X)$ be the pointed covering with
$q_{a*} \pi_1(\basa , x_a) = H_a$.  Endow $\basa$ with the unique
structure of a complex manifold making $q_a$ an unramified analytic
cover.  Consider the diagram
\begin{equation}
\label{Eax}
\begin{tikzcd}
(E_\ax: = q_\ax^* \monu, e_a) \arrow{r}{\bar q_\ax}
\arrow{d}{\burp_\ax}
&   (\monu, x) \arrow{d}{p} \\
(\basa, y_a) \arrow{r}{q_\ax} & ( \mon, X),
\end{tikzcd}
\end{equation}
with $e_a:=(y_a, x)$.  Notice that $p : \monu \ra \mon$ is the
universal family of lines with $n$ holes and hence
$\burp_\ax : E_\ax \ra \basa$ is also a holomorphic family of curves
(lines with holes).
\end{say}

By Lemma \ref {fibratirivestimenti} applied to the diagram \eqref{Eax}
the map $\bar q _\ax : E_\ax \ra \monu$ is the covering such that
$\bar{q}_{\ax*}\pi_1(E_\ax, e_\ax)= K_a: = p_*\meno (H_a) $.  Hence
$f_a: K_a\ra G$ gives a morphism $\pi_1(E_\ax,e_\ax) \ra G$ and thus a
pointed $G$-covering
$ \uu_\ax: ( \ccs_\ax, z_\ax ) \ra ( E_\ax, e_\ax) $ such that
$\im u_{a*} = (\bar{q}_{a*})\meno (\ker f_a)$.  In other words,
$u_\ax$ is the covering such that
\begin{gather}
\label{defu}
\im u_{\ax*} = \bar{q}_{a*}\meno(\ker f_a).
\end{gather}
Composing with $\burp_\ax$ we finally get a holomorphic family of
noncompact Riemann surfaces
$\pi_\ax= \burp_\ax\circ \uu_\ax : \ccs_\ax \ra \basa$.  The following
diagram describes the whole situation.
\begin{equation*}
\begin{tikzcd}
( \ccs_\ax, z_a) \arrow{dr}[swap]{\pi_\ax} \arrow{r}{\uu_{\ax}} &(
E_\ax,e_a) \arrow[r, "\bar q_\ax"] \arrow[d, "\burp_\ax"]
& (\M_{0,n+1}, x) \arrow[d, "p" ] \\
& (\basa,y_a) \arrow[r, "q_a"] & (\M_{0,n},X).
\end{tikzcd}
\end{equation*}
It might help to compare this diagram with the corresponding diagram
of groups:
\begin{equation*}
\begin{tikzcd}
\ker f_a \arrow[hookrightarrow]{r} & K_a \arrow{d}{p_*}
\arrow[hookrightarrow]{r}  & K_x  \arrow{d}{p_*} \\
& H_ a \arrow[hookrightarrow]{r} & H_X
\end{tikzcd}
\end{equation*}

Summing up $p$ is the universal family of lines with $n$ holes,
{$q_a$} is a covering used as a base change, $\burp_\ax$ is the
pull-back family of lines with $n$ holes, $\uu_\ax$ is a Galois cover
and $\pi_\ax$ is a family of non-compact Riemann surfaces. Each fibre
of $\pi_\ax$ covers the corresponding fibre of $\burp_\ax$. More
precisely, if $\yy\in \basa$ and $X=q_\ax(\yy) \in {\M_{0,n}}$,
looking at the fibres over $y$ we have the unramified $G$-covering
\begin{gather}
\label{pre-compatt}
\ccs_{\ax,\yy} \lra E_{\ax,\yy} = \PP^1 -X.
\end{gather}

The last step in the construction is the fibrewise compactification,
which is an application of Grauert-Remmert Extension Theorem. (See
\cite[Ch. XII, Thm. 5.4, p. 340]{SGA})

\begin{teo}[Grauert-Remmert Extension Theorem]
\label{grr}Let $Y$ be a connected complex manifold and $Z\subset Y$
a closed analytic subset such that $ Y^\circ:=Y\setminus Z$ is dense
in $Y$. Let $f^\circ: X^\circ \ra Y^\circ$ be a finite unramified
cover. Then up to isomorphism there exists a unique normal analytic
space $X$ and a unique analytic covering $f:X\ra Y$ such that
$X^\circ \subset X$ and $f^\circ=f|_{X^\circ}$.
\end{teo}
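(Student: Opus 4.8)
The plan is to realize $X$ as the normalization of $Y$ in the covering $f^\circ$, and to dispose of uniqueness first so that existence becomes a purely local question on $Y$.

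First I would treat uniqueness. If $(X,f)$ is as in the statement, then since $f$ is finite and $X$ is normal with $X^\circ$ dense, the sheaf of $\mathcal{O}_Y$-algebras $\mathcal{A}:=f_*\mathcal{O}_X$ is coherent, integrally closed in its total ring of fractions, agrees with $f^\circ_*\mathcal{O}_{X^\circ}$ over $Y^\circ$, and satisfies $X\cong\operatorname{Specan}(\mathcal{A})$ because finite morphisms are affine. Writing $j\colon Y^\circ\hookrightarrow Y$, I would identify $\mathcal{A}$ with the integral closure of $\mathcal{O}_Y$ inside $j_*\bigl(f^\circ_*\mathcal{O}_{X^\circ}\bigr)$: every holomorphic function on $X$ is integral over $\mathcal{O}_Y$ since $f$ is finite, and conversely an integral germ that is holomorphic on $X^\circ$ extends across the preimage of $Z$ because $X$ is normal. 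As this description uses only the datum $f^\circ$, any two normal extensions produce the same $\mathcal{A}$ and hence isomorphic analytic spectra over $Y$; the same description makes local solutions agree canonically on overlaps, so they glue. This reduces existence to the case where $Y$ is a small Stein polydisc.

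For local existence I would simply take $\mathcal{A}$ to be the integral closure of $\mathcal{O}_Y$ inside $j_*\bigl(f^\circ_*\mathcal{O}_{X^\circ}\bigr)$ and set $X:=\operatorname{Specan}(\mathcal{A})$, with $f$ the structure morphism. Then $\mathcal{A}$ restricts to $f^\circ_*\mathcal{O}_{X^\circ}$ on $Y^\circ$, an \'etale algebra being already integrally closed, so $f|_{X^\circ}=f^\circ$; since $\mathcal{A}$ is integrally closed, $X$ is normal; and denseness of $Y^\circ$ together with finiteness of $f$ forces $X^\circ$ to be dense in $X$. All the content then sits in a single assertion: that $\mathcal{A}$ is a \emph{coherent} $\mathcal{O}_Y$-module, finite over $\mathcal{O}_Y$, so that $\operatorname{Specan}(\mathcal{A})$ is a genuine finite analytic covering of $Y$.

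The hard part will be exactly this coherence, equivalently the finiteness of the normalization, which is the deep analytic input of Grauert and Remmert and rests on Oka's coherence theorem and the local parametrization theorem for analytic sets. Its necessity is already visible where $Z$ is a smooth hypersurface: locally $Y=\Delta\times\Delta^{n-1}$ and $Z=\{0\}\times\Delta^{n-1}$, each connected sheet of the covering is a standard $m$-fold cover $(w,z)\mapsto(w^m,z)$, and the extended algebra is $\mathcal{O}_Y[w]$, finite but genuinely ramified along $Z$. At singular points of $Z$, and where several sheets collide, no such explicit model is available, and only the finiteness theorem guarantees that the integral closure remains finitely generated. Granting it, I would assemble the local pieces using the uniqueness already established, obtaining the global $X$ and $f$.
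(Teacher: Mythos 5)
This theorem is not proved in the paper at all: it is quoted as a known result with a reference to SGA1, Ch.~XII, Thm.~5.4, so there is no internal argument to compare your proposal against. Judged on its own terms, your outline follows the standard route (normalization of $Y$ in the \'etale algebra): the identification of $f_*\mathcal{O}_X$ with the integral closure $\mathcal{A}$ of $\mathcal{O}_Y$ in $j_*\bigl(f^\circ_*\mathcal{O}_{X^\circ}\bigr)$, the resulting uniqueness and gluing, and the reduction of existence to a local statement are all sound (modulo a small point you gloss over: integral closedness of $\mathcal{A}$ inside $j_*f^\circ_*\mathcal{O}_{X^\circ}$ is not literally integral closedness in the total quotient ring, and one needs the normality of $X^\circ$ plus the Riemann extension theorem on normal spaces to pass from one to the other). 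But the proposal then says ``granting'' the coherence of $\mathcal{A}$. That coherence \emph{is} the theorem: every other step is a formal reduction, and the finiteness of the integral closure in this setting is exactly the Grauert--Remmert finiteness theorem. A proof that assumes it has not proved anything that could not equally be obtained by citing the extension theorem itself, which is what the paper does. So as a self-contained argument there is a genuine gap, and it sits precisely at the one place where analytic input (Oka coherence, local parametrization, Weierstrass preparation along $Z$) is unavoidable.

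One remark that would turn your sketch into a complete proof of the case the paper actually uses: in this paper $Z$ is always a \emph{smooth divisor}. There your ``illustration'' is not an illustration but the whole argument. Locally $Y\cong D^n$ with $Z=\{z_1=0\}$, the restriction of $f^\circ$ to $D^*\times D^{n-1}$ decomposes into connected coverings classified by subgroups of $\pi_1(D^*)\cong\Zeta$, each isomorphic to $(z_1,\dots,z_n)\mapsto(z_1^m,z_2,\dots,z_n)$, and this visibly extends to a finite map $D^n\to D^n$ with smooth (hence normal) total space; your uniqueness-and-gluing step then assembles these local models into the global $X$ with no appeal to the finiteness of normalization. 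This is exactly the computation the paper performs in the Corollary following the theorem (to conclude that $X$ is smooth), and for general $Z$ one can still avoid the full finiteness theorem by first extending over the smooth codimension-one stratum of $Z$ in this way and then over the remaining codimension-$\geq 2$ locus via the extension theorem for finite coverings of normal spaces. If you want a proof rather than a reduction, I would recommend writing up that version.
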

\begin{cor}
In the hypotheses above, if $Z$ is a smooth divisor, then $X$ is
smooth.
\end{cor}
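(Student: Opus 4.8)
The plan is to reduce the statement to a purely local computation around a point of $Z$. Smoothness of $X$ is a pointwise property, and over $Y^\circ$ the extension $f$ agrees with the unramified cover $f^\circ$, which is a local biholomorphism onto the manifold $Y^\circ$; hence $X^\circ$ is already smooth and I would only need to check smoothness at points $x\in f\meno(Z)$. So I would fix such a point, put $y:=f(x)$, and work in a neighbourhood of $y$.

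Since $Z$ is a smooth divisor, near $y$ I can choose coordinates identifying a neighbourhood $U$ with a polydisk $\Delta^n$ so that $Z\cap U=\{z_1=0\}$, whence $U^\circ:=U\cap Y^\circ\cong\Delta^*\times\Delta^{n-1}$ and $\pi_1(U^\circ)\cong\Zeta$, generated by a loop around the divisor. The next step is to classify the covering locally: a connected finite cover of $U^\circ$ corresponds to a finite-index subgroup $m\Zeta\subset\Zeta$, so it is biholomorphic to a standard power map
\begin{gather*}
c_m:\Delta^*\times\Delta^{n-1}\lra\Delta^*\times\Delta^{n-1},\qquad (w_1,w')\mapsto(w_1^m,w'),
\end{gather*}
and $f^\circ$ over $U^\circ$ is a disjoint union of such $c_m$. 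I would then observe that each $c_m$ extends to $\bar c_m:\Delta\times\Delta^{n-1}\ra\Delta\times\Delta^{n-1}$, whose total space is again a polydisk, hence smooth and in particular normal, so the normal extension of $f^\circ\restr{U^\circ}$ is the smooth space $\bigsqcup\Delta^n$.

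Finally I would invoke the uniqueness in Theorem \ref{grr}: the preimage $f\meno(U)$ is an analytic cover of $U$, normal (normality being local) and restricting to $f^\circ$ over $U^\circ$, so it must coincide with the model $\bigsqcup\Delta^n$; thus $x$ has a smooth neighbourhood, and as $x$ is arbitrary, $X$ is smooth.

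The main obstacle is precisely this last point, namely the compatibility of the Grauert--Remmert extension with restriction to the open set $U$: one must know that $f\meno(U)\ra U$ is again the unique normal extension over $U$ of the restricted cover, so that it is forced to agree with the explicit smooth local model. This follows from the locality of normality together with the uniqueness clause of Theorem \ref{grr} applied over $U$, but it is the step deserving the most care. The local classification of finite covers of the punctured polydisk, by contrast, is standard.
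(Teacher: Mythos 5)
Your argument is correct and follows essentially the same route as the paper's proof: reduce to a local chart $U\cong D^n$ with $Z\cap U=\{z_1=0\}$, classify the finite covers of $D^*\times D^{n-1}$ as power maps $(z_1,z')\mapsto(z_1^m,z')$, extend these to the full polydisk, and conclude by the uniqueness clause of the Grauert--Remmert theorem that $f\meno(U)$ is the smooth model. You are somewhat more explicit than the paper about the disjoint union over components and about why uniqueness localizes to $U$, but the substance is identical.
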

\begin{proof}
Let $D$ be the unit disc.  Using a local chart $U\cong D^n$ of $Y$
such that $U\cap Z=D^n\cap\{z_1=0\}$ we get a finite cover of
$D^*\times D^{n-1}$. By the topological classification of coverings
disc, it is of the form $(z_1,...,z_n) \mapsto (z_1^m,z_2,...,z_n)$
for some $m\geq1$, hence extends to an analytic cover $D^n \ra
D^n$. So by uniqueness $f^{-1}(U)\cong D^n$. In particular
$f^{-1}(U)$ is smooth.
\end{proof}

\begin{lemma}
The unramified covering $\uu_{\ax}:\ccs _\ax \ra E_\ax$ extends
uniquely to an algebraic ramified cover
$\uu_{\ax}:\cc _\ax \ra\mathbb{P}^1\times \basa$, with $\cc_{\ax}$
and $\basef_{a}$ smooth and quasi-projective.
\end{lemma}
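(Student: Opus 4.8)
The plan is to carry out the fibrewise compactification all at once, by realizing $E_\ax$ as the complement of a smooth divisor inside a quasi-projective variety and then applying the Grauert-Remmert Extension Theorem (Theorem \ref{grr}) a single time over the whole base $\basa$. The natural ambient space is $\PP^1\times\basa$. First I would note that the universal family $\monu$ sits inside $\PP^1\times\mon$ as an open subset via $(x_0,X)\mapsto(x_0,X)$, its image being $\{(x_0,X): x_0\neq x_i,\ i=1,\dots,n\}$; the complement is $\bigcup_{i=1}^n\Delta_i$, where $\Delta_i=\{x_0=x_i\}$ is the graph of the $i$-th marked point. Pulling back along the finite étale cover $q_\ax$ and using $q_\ax^*(\PP^1\times\mon)=\PP^1\times\basa$, I obtain an open immersion $E_\ax=q_\ax^*\monu\hookrightarrow\PP^1\times\basa$.

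Next I would verify that the complement $Z:=(\PP^1\times\basa)-E_\ax$ is a smooth divisor. It is the disjoint union $Z=\bigsqcup_{i=1}^n Z_i$ with $Z_i:=q_\ax^*\Delta_i$. Each $Z_i$ is the graph of the holomorphic section $y\mapsto x_i(q_\ax(y))$, hence biholomorphic to $\basa$ and smooth of codimension one, and the $Z_i$ are pairwise disjoint because the marked points of a configuration in $\mon$ are distinct. Thus $Y:=\PP^1\times\basa$ is a connected complex manifold, $Z\subset Y$ is a smooth closed analytic subset, and $Y^\circ:=Y-Z=E_\ax$ is dense in $Y$. In this situation the finite unramified $G$-covering $\uu_\ax:\ccs_\ax\ra E_\ax=Y^\circ$ satisfies exactly the hypotheses of Theorem \ref{grr}, which yields a unique normal analytic space $\cc_\ax$ together with a unique finite analytic cover $\uu_\ax:\cc_\ax\ra\PP^1\times\basa$ restricting to the original covering over $Y^\circ$. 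Because $Z$ is a disjoint union of smooth divisors, locally around any point of $Y$ at most one component $Z_i$ is met, so the local model in the proof of the Corollary to Theorem \ref{grr} applies, giving that $\cc_\ax$ is smooth.

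The remaining point, and the one I expect to be the real obstacle, is algebraicity and quasi-projectivity, since the extension theorem a priori produces only a complex space. Here I would argue as follows. The variety $\mon\cong\conf_{0,n-3}\cxx$ is smooth and affine (it is the principal open subset of $(\cxx)^{\,n-3}$ where $\prod_{i<j}(z_i-z_j)\neq 0$). By the Riemann Existence Theorem in the form contained in the cited reference (SGA~1, Exp.~XII), the finite étale analytic cover $q_\ax:\basa\ra\mon$ is the analytification of a unique finite algebraic étale cover; since $\mon$ is affine and a finite morphism is affine, $\basa$ is affine, in particular smooth and quasi-projective, and hence so is $\PP^1\times\basa$.

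Finally, the same algebraic form of the Grauert-Remmert/Riemann Existence package applies to the pair $(Y^\circ\subset Y)$ with $Y=\PP^1\times\basa$ a normal quasi-projective variety: a finite étale analytic cover of the Zariski-open $Y^\circ$ extends to a unique \emph{finite algebraic} ramified cover of $Y$, whose analytification is the cover constructed above. Therefore $\uu_\ax:\cc_\ax\ra\PP^1\times\basa$ is an algebraic ramified cover, $\cc_\ax$ is a normal quasi-projective variety, and it is smooth by the previous paragraph. Composing $\uu_\ax$ with the projection $\PP^1\times\basa\ra\basa$ then produces the algebraic family $\pi_\ax:\cc_\ax\ra\basa$, completing the fibrewise compactification. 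The subtlety I would emphasize throughout is precisely this analytic-to-algebraic passage: it is what guarantees that the objects furnished by Theorem \ref{grr} are genuinely algebraic varieties rather than merely complex analytic spaces.
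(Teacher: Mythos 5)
Your construction of the compactification and the smoothness argument coincide with the paper's: both realize $E_\ax$ as the complement of the disjoint union of the $n$ graph divisors $\{x_0=x_i\}$ pulled back to $\PP^1\times\basa$, apply Theorem \ref{grr} once over the whole base, and deduce smoothness of $\cc_\ax$ from the local model over a smooth divisor. Where you genuinely diverge is in the analytic-to-algebraic passage, which you correctly single out as the delicate point. The paper first observes (via SGA~1, Exp.~XII, Thm.~5.1) that $\basef_a$ and $E_a$ are quasi-projective with algebraic covering maps, then chooses a smooth projective compactification $\bar{\basef}_a\supset\basef_a$, applies Grauert--Remmert a second time over the projective variety $\PP^1\times\bar{\basef}_a$ so that the resulting finite analytic cover $\bar\cc_\ax$ is automatically projective, and finally recovers $\cc_\ax$ as $\bar{\uu}_\ax^{-1}(\PP^1\times\basef_a)$ by the uniqueness clause. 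You instead remain over the quasi-projective $\PP^1\times\basa$ (noting the sharper fact that $\mon$ is a principal open subset of affine space, so $\basa$ is affine) and invoke the algebraic form of the extension package: Riemann existence identifies $\ccs_\ax\ra E_\ax$ with an algebraic finite \'etale cover, normalization of $\PP^1\times\basa$ in the corresponding function field gives the finite algebraic branched cover, and uniqueness in Theorem \ref{grr} identifies its analytification with the analytic extension. Both routes are sound; the paper's trades on projectivity of an ambient compactification (where finiteness of the analytic cover forces projectivity), while yours requires the algebraic extension theorem for normal quasi-projective varieties, a statement not recorded in the paper but available in the same SGA reference you cite.
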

\begin{proof}
Consider $\mathbb{P}^1 \times \M_{0,n}$. Let $x_0\in \mathbb{P}^1$
and $X=(x_1,...,x_{n})\in \M_{0,n}$. Recall that this means that
$x_{n-2}=0, x_{n-1}=1, x_n = \infty$ and
$(x_1, \lds, x_{n-3})\in \conf_{0,n-3}\cxx$.  Let
$Z_i\subset \mathbb{P}^1 \times \M_{0,n} $ be the smooth divisor
$Z_i:=\{x_0=x_i\}$ for $i=1,...,n$.  The divisors $Z_1, \lds, Z_n$
are pairwise disjoint, so their union, which we denote by $Z$, is a
smooth divisor of $\mathbb{P}^1 \times \M_{0,n}$.  The map
$\bar q _\ax$ in \eqref{Eax} obviously extends to a map
\begin{gather*}
\bar q _\ax: \PP^1 \times \basa \lra \PP^1 \times \mon.
\end{gather*}
Then $\bar q_\ax^*Z$ is a smooth divisor of
$\mathbb{P}^1\times \basef _\ax$.  Since
$\M_{0,n+1}=(\mathbb{P}^1 \times \M_{0,n})\setminus Z$,
$E_\ax=(\mathbb{P}^1\times \basa ) \setminus \bar{q}_a^* Z$. It
follows that we can apply Grauert-Remmert Extension Theorem to the
topological covering $\uu_{\ax}:\ccs_\ax \ra E_\ax$, which can be
thus completed to a ramified cover
$\uu_{\ax}:\cc_\ax \ra\mathbb{P}^1\times \basa$, with $\cc_{\ax}$
smooth.  To prove the quasi-projectivity one uses a similar
argument.  An \'etale analytic cover of a quasiprojective variety is
quasiprojective and the covering map is algebraic, see
e.g. \cite[Ch. XII, Thm. 5.1, p. 333] {SGA}.  Since $\mon$ and
$\monu$ are quasiprojective, and $q_a$ and $\bar{q}_a$ are \'etale,
we get that $\basef_a$ and $E_a$ are quasiprojective and $q_a$ and
$\bar{q}_a$ algebraic morphisms. Let $\bar\basef_a$ be a projective
manifold containing $\basef_a$ as an open subset. Then $E_a$ is a
Zariski open subset of $\puno\times \bar\basef_a$ and we can apply
Grauert-Remmert Extension Theorem to $\uu_{\ax}:\ccs_\ax \ra E_\ax$
this time viewing $E_a$ as an open subset of
$\puno \times \bar{\basef}_a$.  We obtain a ramified cover
$\bar{\uu}_{\ax}:\bar\cc_{\ax}\ra \puno \times \bar\basef_a$. Since
$\puno\times \bar\basef_a$ is projective, $\bar\cc_{\ax}$ is also
projective. By uniqueness
$\cc_{\ax}=\bar{\uu}_{\ax}^{-1}(\puno \times \basef_a)$. So it is
quasi-projective.

\end{proof}

\begin{say}
Notice that the projection
$\mathbb{P}^1\times \basa \rightarrow \basa$ extends extends
$\burp_\ax$ in \eqref{Eax}, while the composition
$ \cc_{\ax}\overset{\uu_{\ax}}{\longrightarrow} \mathbb{P}^1\times
\basa 
{\longrightarrow} \basa$, extends $\pi_\ax$.  We denote the
extensions by the same symbol. We claim that
\begin{gather*}
\pi_{\ax}: \cc_{\ax} \longrightarrow \basa
\end{gather*}
is a submersion.  Indeed, let $U\cong D^n$ be a local chart in
$\puno \times \basa$ such that
$U\cap \pi^*Z=U\cap \pi^*Z_i= \{x_0-x_i=0\}$ for some $i=1,...,n$
(with $x_{n-2}=0$, $x_{n-1}=1$, $x_n=\infty$). Denoted by
$w=x_0-x_i$, we get that $w, x_1,...,x_{n}$ are local coordinates on
$U$ and 
$\pi'|_{\pi'^{-1}(U)}:\pi'^{-1}(U)\ra U$ is of the form
$(w, x_1,...,x_{n})\mapsto (w^m, x_1,...,x_{n})$, for some
$m\geq 2$. We conclude that locally
$\pi_{\ax} (w, x_1,...,x_{n}) = (x_1,...,x_{n})$.  Thus $\pi_{\ax}$
is a submersion onto a smooth base and its fibers are smooth curves.
\end{say}
\ \\
If $\yy\in \basa$, the fibre $ \cc_{\ax,\yy} \ra \PP^1 $ of $\pi_\ax$
over $y$ is the unique smooth compactification of the unramified cover
\eqref{pre-compatt}, i.e. the one given by Riemann's Existence
Theorem.

We call
\begin{equation*}
\begin{tikzcd}
\cc_\ax \arrow{rr}{\uu_\ax} \arrow{dr}{\pi_\ax} &&
\PP^1 \times \basa \arrow{ld}{\burp_\ax} \\
& \basa &
\end{tikzcd}
\end{equation*}
the family of $G$-coverings associated with the datum
$\Datum\in \datan(G)$, the point $x=(x_0,X)\in \monu$, the geometric
basis $\base$ of $\pi_1(\puno-X,x_0)$ and the extension
$a\in \coll(x,\base, \theta)$.

\begin{teo}
\label{costruzione}
If $x \in \monu$, $\base $ is a basis an basis of $N_x$ and
$\theta $ is an $n$-datum, then
\begin{gather}
\label{eq:4}
\kolf (x, \base, \theta) : =\{\cc_a \ra \basef_a \}_{a\in \coll(x,
\base, \theta)}
\end{gather}
is a collection of families in the sense of Definition
\ref{colldef}.
\end{teo}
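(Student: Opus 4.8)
The plan is to verify, one by one, the four conditions of Definition \ref{colldef} for the indexed set \eqref{eq:4}, and to observe that the first three have effectively already been established during the construction, so that the whole substance of the statement lies in condition (4). Condition (1) holds because the index set $\coll(x,\base,\theta)=\coll(*_x,\theta\circ\chi\meno)$ is a directed set by the Proposition in Section \ref{sec:groups}. Condition (2) is exactly the content of the Lemma just proved, where each $\basef_a$ was given the structure of a smooth quasi-projective variety (and $y_a$ is the marked point coming from the pointed covering $q_a$). Condition (3) follows from the discussion preceding the statement, where $\pi_a:\cc_a\ra\basef_a$ was shown to be a submersion onto a smooth base with smooth curve fibres, i.e.\ a family of curves.

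It therefore remains to produce, for $a\geq b$, a finite \'etale cover $v_{ab}:(\basef_a,y_a)\ra(\basef_b,y_b)$ with $\cc_a\cong v_{ab}^*\cc_b$. Recall that $a\geq b$ means $H_a\subset H_b$ and $f_a=f_b\restr{K_a}$, whence also $K_a\subset K_b$. Since $q_{a*}\pi_1(\basef_a,y_a)=H_a\subset H_b=q_{b*}\pi_1(\basef_b,y_b)$, the covering theory recalled in \ref{ex-pippone} yields a unique pointed covering $v_{ab}:(\basef_a,y_a)\ra(\basef_b,y_b)$ with $q_b\circ v_{ab}=q_a$; it has degree $[H_b:H_a]<\infty$ and, being an \'etale analytic cover of a quasi-projective variety, is a finite algebraic \'etale morphism. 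From $q_a=q_b\circ v_{ab}$ I get the canonical identification
\begin{gather*}
E_a=q_a^*\monu=v_{ab}^*(q_b^*\monu)=v_{ab}^*E_b,
\end{gather*}
hence a map $w_{ab}:E_a\ra E_b$ covering $v_{ab}$ and satisfying $\bar{q}_a=\bar{q}_b\circ w_{ab}$.

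Next I would match the two $G$-coverings over $E_a$. By \eqref{defu} the cover $u_a:\ccs_a\ra E_a$ is the $G$-covering with monodromy $\rho_a:=f_a\circ\bar{q}_{a*}:\pi_1(E_a,e_a)\ra G$, while the pull-back $w_{ab}^*\ccs_b$ is the $G$-covering of $E_a$ with monodromy $\rho_b\circ w_{ab*}$, where $\rho_b=f_b\circ\bar{q}_{b*}$. By Lemma \ref{fibratirivestimenti}, $\bar{q}_{a*}$ maps $\pi_1(E_a,e_a)$ isomorphically onto $K_a\subset K_b$; using $\bar{q}_a=\bar{q}_b\circ w_{ab}$ and $f_a=f_b\restr{K_a}$ I obtain
\begin{gather*}
\rho_a=f_a\circ\bar{q}_{a*}=f_b\circ\bar{q}_{a*}=f_b\circ\bar{q}_{b*}\circ w_{ab*}=\rho_b\circ w_{ab*}.
\end{gather*}
As $G$-coverings are determined up to $G$-equivariant isomorphism by the conjugacy class of their monodromy, and here the monodromies coincide on the nose, this gives $\ccs_a\cong w_{ab}^*\ccs_b$ as $G$-coverings of $E_a$.

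Finally I would pass to the fibrewise compactifications; this is the step I expect to be the main obstacle, since it requires the compatibility of pull-back with the Grauert--Remmert extension. Let $V_{ab}:\PP^1\times\basef_a\ra\PP^1\times\basef_b$ be the finite \'etale map induced by $v_{ab}$, so that $E_a=V_{ab}^{-1}(E_b)$, $V_{ab}\restr{E_a}=w_{ab}$, and $V_{ab}^{-1}(\bar{q}_b^*Z)=\bar{q}_a^*Z$. The fibre product $V_{ab}^*\cc_b\ra\PP^1\times\basef_a$ is a finite analytic cover which, being an \'etale base change of the normal space $\cc_b$, is again normal, and whose restriction to $E_a$ is $w_{ab}^*\ccs_b\cong\ccs_a$. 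By the uniqueness clause of the Grauert--Remmert Extension Theorem \ref{grr}, applied over $\PP^1\times\basef_a$ with branch divisor $\bar{q}_a^*Z$, this forces a $G$-equivariant isomorphism $V_{ab}^*\cc_b\cong\cc_a$. Since $\PP^1\times\basef_a=v_{ab}^*(\PP^1\times\basef_b)$, associativity of fibre products gives $V_{ab}^*\cc_b=v_{ab}^*\cc_b$ as families over $\basef_a$, whence $\cc_a\cong v_{ab}^*\cc_b$. This establishes condition (4) and completes the verification that \eqref{eq:4} is a collection of families.
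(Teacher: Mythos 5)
Your proof is correct, and it shares the paper's overall skeleton: conditions (1)--(3) are indeed byproducts of the construction, and the substance is to produce $v_{ab}$ from the inclusion $H_a\subset H_b$, identify $\ccs_a$ with the pull-back of $\ccs_b$, and transfer this to the compactifications via the uniqueness clause of Grauert--Remmert. Where you genuinely diverge is in the central identification. The paper constructs intermediate covering maps $\tv\colon E_a\ra E_b$ and $\ttv\colon \ccs_a\ra\ccs_b$ from subgroup inclusions, checks the commutation $\burp_b\tv=v\burp_a$ by a lifting argument, computes $\im\ttv_*=\pi_{b*}\meno(\im v_*)$ through a chain of group-theoretic identities, and only then invokes Lemma \ref{converse} to conclude $\ccs_a\cong v^*\ccs_b$ as bundles over $\basef_a$. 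You instead exploit the fact that $E_a$ is \emph{defined} as $q_a^*\monu$, so $E_a=v_{ab}^*E_b$ and the commuting square come for free from functoriality of fibre products, and you identify $\ccs_a$ with $w_{ab}^*\ccs_b$ by comparing monodromy homomorphisms of two pointed $G$-coverings of $E_a$, which literally coincide because $f_a=f_b\restr{K_a}$ and $\bar q_{a*}=\bar q_{b*}\circ w_{ab*}$. This bypasses Lemma \ref{converse} and the $\im\ttv_*$ computation entirely; it is available because $u_a$ is a covering and hence classified by its monodromy, whereas Lemma \ref{converse} is designed for general fibre bundles with connected fibres. Your last step --- realizing $v_{ab}^*\cc_b$ as the normal, finite, étale base change $V_{ab}^*\cc_b$ restricting to $\ccs_a$ over $E_a$ and applying GR uniqueness, then collapsing $V_{ab}^*\cc_b=v_{ab}^*\cc_b$ by associativity of fibre products --- makes explicit a point the paper leaves implicit when it asserts that the isomorphism of the open coverings extends. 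Both routes are sound; yours is the more economical in this Galois setting, the paper's is the one that generalizes beyond coverings.
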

\begin{proof}
It remains only to prove property (4).  We start with an
observation.  If $p_i : (E_i, e_i) \ra (B, b) $ are coverings and
$\im p_{1*} \subset \im p_{2*}$, the unique continuous map
$f: (E_1,e_1) \ra (E_2, e_2) $ such that $p_2 \circ f = p_1$ is a
covering map.  Indeed let $f: (X,x)\ra (E_2,e_2)$ be the covering
with $\im f_* = p_{2*}\meno (\im p_{1*})$. Then $p_2\circ f$ is a
covering isomorphic to $p_1$, so we can assume $p_1= p_2 \circ f$.

Now, given $a=(H_a, f_a)$ and $b=(H_b , f_b)$, $a\geq b$ means that
$H_a \subset H_b \subset \pi_1(\mon, X)$, hence
$K_a=p_*\meno (H_a) \subset K_b = p_*\meno (H_b)$ and
$f_a: K_a \ra G$ is the restriction of $f_b$.  We have coverings
$q_i:(\basef_i, y_i) \ra (\mon, X)$ with $H_i = \im q_{i*}$ for
$i=a,b$. By the observation at the beginning there is a unique
covering map $v : (\basef_a, y_a) \ra (\basef _b, y_b) $ such that
$q_b \circ v = q_a$ and $\im v_* = q_{b*}\meno (H_a)$.
For the same reason, since $\im \bar{q}_{i*} = K_i$ for $i=a,b$
there is a covering $\tv: (E_a,e_a) \ra (E_b,e_b)$ such that
$\bar{q}_b \circ \tv = \bar{q}_a$.  We claim that
\begin{gather}
\label{commutano}
\burp_b \tv = v \burp _a.
\end{gather}
Indeed
$q_b \burp_b \tv = p\bar{q}_b \tv = p {\bar{q}_a} = q_a \burp_a =
q_b v \burp_a$.  Hence $\burp_b \tv $ and $ v \burp_a$ lift the same
map with respect to the covering $q_b$. Since
$\burp_b \tv (e_a) = y_b = v \burp_a(e_a)$ we conclude that
$\burp_b \tv = v \burp _a$ as claimed.

Finally we have the coverings $u_i :\ccs_i \ra E_i$ such that
$\im u_{i*} = \bar{q}_{i*}\meno(\ker f_i)$, see \eqref{defu}.  Since
$\tv_* = \bar{q}_{b*}\meno \circ \bar{q}_{a*}$ and
$\ker f_a \subset \ker f_b$ we have
$ \tv_* (\bar{q}_{a*}\meno (\ker f_a )) = \bar{q}_{b*}\meno (\ker
f_a ) \subset \bar{q}_{b*}\meno (\ker f_b ) $.  This means that
\begin{gather}\label{umpf-b}
\im (\tv \circ u_a )_* =\tv_* (\bar{q}_{a*}\meno (\ker f_a ))
\subset \im u_{b*} .
\end{gather}
So we can apply once more the observation at the beginning and we
get a covering $\ttv : \ccs_a \ra \ccs_b$ such that
\begin{gather}
\label{umpf-a}
u_b \ttv = \tv u_a \qquad \im \ttv_* = u_{b*}\meno (\im (\tv \circ
u_a)_*).
\end{gather}
Composing with $\burp_a$ and $\burp_b$ and using \eqref{commutano}
we get a commutative diagram
\begin{equation}\label{ccc}
\begin{tikzcd} [row sep =scriptsize]
\ccs_a \arrow[r, "\ttv"] \arrow[d, swap, "\pi_a"]
& \ccs_b\arrow[d, "\pi_b" ] \\
\basef_a \arrow[r, "v"] & \basef_b.
\end{tikzcd}
\end{equation}
with $\pi_a, \pi_b$ bundles and $\tv, \ttv$ coverings. We claim that
\begin{gather}
\label{eq:3}
\im \ttv_* = \pi_{b*}\meno ( \im v_*).
\end{gather}
Indeed starting from \eqref{umpf-b} we compute
\begin{gather*}
\im (\tv \circ u_a)_* = \bar{q}_{b*}\meno (\ker f_a) =
\bar{q}_{b*}\meno ( K_a\cap \ker f_b ) = \bar{q}_{b*}\meno (K_a)
\cap \bar{q}_{b*}\meno (\ker f_b)
\\
\bar{q}_{b*}\meno (\ker f_b) = \im u_{b*}\qquad
K_a = p_*\meno (H_a) \\
\bar{q}_{b*}\meno (K_a) = \bar{q}_{b*} \meno p_*\meno (H_a) =
(p\bar{q}_{b*})_*\meno (H_a) = ( {q}_{b*}\burp_{b*})\meno (H_a) = \\
= \burp_{b*} \meno (q_{b*}\meno (H_a) ) =
\burp_{b*} \meno (\im v_* ) \\
\im (\tv \circ u_a)_*
= \burp_{b*} \meno (\im v_* ) \cap \im u_{b*}.
\end{gather*}
So from \eqref{umpf-a} we get
\begin{gather*}
\im \ttv_* = u_{b*}\meno (\im (\tv \circ u_a)_*) = u_{b*}\meno (
\burp_{b*} \meno (\im v_* ) \cap \im u_{b*} ) =
u_{b*}\meno ( \burp_{b*} \meno (\im v_* )  ) = \\
=\pi_{b*}\meno (\im v_*).
\end{gather*}
This proves \eqref{eq:3}.  Applying Lemma \ref {converse} to the
diagram \eqref{ccc} we get that $\ttv: \ccs_a \ra v^*\ccs_b$ is an
isomorphism of bundles over $\basef_a$.  The map $\ttv$ is an
isomorphism of the coverings $\ccs_a \ra E_a$ and
$v^*\ccs_b \ra E_a$. By the uniqueness statement in Grauert-Remmert
Extension Theorem, it extends to an isomorphism of the coverings
$\cc_a \ra \PP^1\times \basef_a$ and
$v^*\cc_b \ra \PP^1\times \basef_a$.  This extension is an
isomorphism of the families of curves: $\cc_a \cong v^*\cc_b$.
\end{proof}

\section{Independence from the choices}
\label{sec:indip-choices}

In this section, we conclude the proof of our Theorem \ref{mainz}. We
present two main arguments. The first one is Theorem \ref{maindep},
whose proof will take up most of the section. It states the
independence of the collection $\kolf (x, \base, \theta)$ on
$x \in \monu$, on the geometric basis $\base$, and on the
$\Aut G \times \auta$- orbit of $ \theta$. Secondly, we show (Theorem
\ref{famtipotopo}) that every curve in a family of the collection
$\pi_a : \cc_a \ra \basef_a$ has the topological type associated with
$\theta$, and that, conversely, for any $G$-curve $C$ with topological
type $[\theta]$, there is at least one fiber of $\cc_a \ra \basef_a$
which is (unmarkedly) $G$-isomorphic to $C$ (and there are only
finitely many such fibers).

\begin{defin}\label{kolfequiv}
We say that two collections of families
$\{ \cc_a \ra \basef_a\}_{a\in \coll}$ and
$\{ \ove{\cc}_\bara \ra \ove{\basef}_\bara\}_{\bara\in \ove \coll}$
are \emph{equivalent} if there is an order preserving bijection
$ a\mapsto \bara $ of $\coll$ onto $\ove\coll$ and for every
$a\in \coll$ a biholomorphism $\w_a: \basef_a \ra \ove\basef_\bara$
such that
\begin{enumerate}
\item $\cc_a\cong \w_a^* \ove\cc_\bara$;
\item if $a, b\in \coll$ and $a\geq b$, the following diagram
commutes:
\begin{equation*}
\begin{tikzcd}
\basef_a \arrow{d}[swap]{v_{ab}} \arrow{r}{\w_a} & \ove\basef_\bara \arrow{d}{\bar v_{\bara \barb}}\\
\basef_b \arrow{r}{\w_b} & \ove\basef_{\bar b} .
\end{tikzcd}
\end{equation*}
\end{enumerate}

\end{defin}

In the following we conclude the independence of our collection from
the choices made: different choices yield to equivalent collections.
\begin{teo}
\label{maindep}
Up to equivalence the collection of families
$\kolf(x , \base, \theta)$ is independent of the choices of $x$ and
$\base$ and only depends on the $\Aut G \times \auta$- orbit of
$ \theta$.  In particular the collection $\kolf (x, \base, \theta)$
only depends on the topological type $[\theta]$.
\end{teo}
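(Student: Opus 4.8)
The plan is to establish the asserted invariance one symmetry at a time, in each case producing an order-preserving bijection of the indexing directed sets together with a compatible family of biholomorphisms of the bases, i.e.\ an equivalence of collections in the sense of Definition \ref{kolfequiv}; the last assertion about $[\theta]$ is then immediate from Corollary \ref{outout}, since the topological type is by definition the $\Aut G\times\auta$-orbit of $\theta$. A first bookkeeping reduction removes the geometric basis as an independent case: if $\base,\baseb$ are two geometric bases at the same $x$ with associated isomorphisms $\chi,\bar\chi$ and $\nu:=\chi\meno\circ\bar\chi\in\auta$, then $\bar\chi\meno=\nu\meno\circ\chi\meno$, so $\theta\circ\bar\chi\meno=(\theta\circ\nu\meno)\circ\chi\meno$ and therefore $\coll(x,\baseb,\theta)=\coll(x,\base,\theta\circ\nu\meno)$ is literally the same collection. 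Hence basis-independence will follow once invariance under the $\auta$-action on $\theta$ is proved.

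For the $\Aut G$-factor I would invoke Lemma \ref{azioneautG}: replacing $\theta$ by $\eta\circ\theta$ replaces $f_a$ by $\eta\circ f_a$ without changing $H_a$ or $\ker f_a$, so the bases $\basef_a$, the coverings $\ccs_a\to E_a$, and hence the compactified families $\cc_a$ are literally unchanged (only the $G$-action twists by $\eta$), and I may take $\w_a=\id$. I also record a device used repeatedly below: if $f$ is replaced by $f\circ\inn_z$ with $z\in N_x$, then Lemma \ref{ses-isomorfe} applied to the inner automorphism $\gamma=\inn_{i(z)}$ of $K_x$ (with $\beta=\id$ on $H_X$) gives an order-preserving bijection fixing each $\basef_a$ and altering $\ccs_a$ only by the base-point change $\ker f_a\mapsto\inn_z(\ker f_a)$, hence by a covering isomorphism (\ref{ex-pippone}). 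Thus inner changes of $f$ are harmless and I may always argue up to $\Inn N_x$.

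Independence of $x$ I would handle by parallel transport. A path $\beta$ from $X$ to $X'$ in $\mon$ induces, via Lemma \ref{lemmatrasporto} and Propositions \ref{sigmacancellet}--\ref{esp-parall}, an isomorphism of the split sequences $(*_x)\cong(*_{x'})$ intertwining the two $\varepsilon$-actions; Lemma \ref{ses-isomorfe} then yields the bijection of indexing sets. Geometrically, the covering of $\mon$ attached to $H_a\subset\pi_1(\mon,X)$ and the one attached to $\beta_\#H_a\subset\pi_1(\mon,X')$ are canonically biholomorphic by \ref{ex-pippone}, which supplies $\w_a$; the analogous statement one level up matches the bundles $E_a$ and the $G$-coverings $\ccs_a$, whose monodromy is the transported $\ker f_a$. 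Grauert--Remmert uniqueness (Theorem \ref{grr}) upgrades the fibrewise isomorphism to the compactifications $\cc_a$, and since all identifications are induced by the single transport $\beta_\#$ they respect inclusions of subgroups, hence the commuting squares (2) of Definition \ref{kolfequiv}.

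It remains to treat the $\auta$-action on $\theta$, which is the heart of the matter. Writing $f':=\theta\circ\nu\meno\circ\chi\meno=f\circ\bar\nu\meno$ with $\bar\nu:=\chi\nu\chi\meno\in\Aut^*N_x$, I must realize $\bar\nu$ geometrically. If $\bar\nu\in\Aut^{**}N_x$, Proposition \ref{traspa} gives $\bar\nu=\inn_z\circ f^\alpha_*=\inn_z\circ\varepsilon([\alpha])$ for a loop $[\alpha]\in\pi_1(\mon,X)$; up to the harmless inner factor this is conjugation by $s_*[\alpha]\in K_x$, an inner automorphism of $K_x$ inducing $\inn_{[\alpha]}$ on $H_X$, so Lemma \ref{ses-isomorfe} applies and, $H_a$ and $\inn_{[\alpha]}(H_a)$ being conjugate, the associated coverings of $\mon$ are isomorphic, furnishing $\w_a$. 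A general $\nu\in\auta$ differs from such a pure element by a permutation of the branch points, which is \emph{not} realized by a loop in $\mon$ (its fundamental group is a pure braid group, so $\Im\varepsilon\subset\Aut^{**}$), but by the relabelling action of the symmetric group on $\mon\cong\confpn/\PGL(2,\C)$ of \ref{omegan}, acting biholomorphically on the slice after renormalizing $x_{n-2},x_{n-1},x_n$ to $0,1,\infty$ by a suitable element of $\PGL(2,\C)$ and lifting to $\monu$. Carrying the whole construction along this biholomorphism and composing with the previous cases yields the equivalence for arbitrary $\nu$. The main obstacle is precisely this non-pure part: one must produce the permutation symmetry compatibly with the pinned normalization and then verify, using the functoriality of every bijection in the subgroup $H_a$, that the resulting biholomorphisms $\w_a$ fit into the commuting squares of Definition \ref{kolfequiv} simultaneously for all $a\geq b$. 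Finally, Theorem \ref{teotipitopo} identifies the $\Aut G\times\auta$-orbit of $\theta$ with the topological type $[\theta]$, giving the concluding sentence.
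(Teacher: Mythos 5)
Your proposal is correct and follows essentially the same route as the paper: the $\Aut G$-action is absorbed via Lemma \ref{azioneautG}, independence of $x$ and of the adapted basis is obtained by parallel transport together with Lemma \ref{ses-isomorfe}, the $\Aut^{**}\Gamma_n$-action on $\theta$ is realized geometrically through Proposition \ref{traspa}, and the residual permutation of branch points is handled by the relabelling action of $S_n$ on $\mon$ lifted to $\monu$ (the paper's maps $\tau$ and $\tilde\tau$). The only cosmetic difference is that the paper disposes of inner twists of $f$ by the identity $\theta\circ\inn_z=\inn_{\theta(z)}\circ\theta$, reducing them to the $\Aut G$ case, rather than by a separate application of Lemma \ref{ses-isomorfe}.
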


The proof of Theorem \ref{maindep} is organized as follows: we start
by showing that the action of $\Aut G$ on $\theta$ does not change the
collection (Lemma \ref{lemA}); next we prove that changing $x$ and
$\base$ by parallel transport leads to equivalent collections (Lemma
\ref{fbetacollf}). The combination of these two results implies that,
up to equivalence, the collection of families
$\kolf(x , \base, \theta)$ does not change under the action
$\Aut G \times \Aut^{**}\Gamma_n$ on $ \theta$ (Lemma
\ref{autGxaut**}).  Finally we combine these results together and
complete the proof of Theorem \ref{maindep}.

\begin{lemma}\label{lemA}
Let $\theta \in \data^n(G)$ and $\eta\in \Aut G$. Set
$\bar \theta : = \eta \circ \theta $. Let
$\coll(x,\base,\theta) \ra \coll(x,\base, \bar\theta) , a\mapsto
\bar a $ be the bijection of Lemma \ref{azioneautG}. Then
$\basef_{\bar a } = \basef _a $ and $\cc_{\bar a} = \cc_a$. So
$\kolf(x,\base,\theta)=\kolf(x,\base, \bar\theta) $. In particular,
for $\z\in N_x$,
$\kolf(x,\base,\theta)=\kolf(x,\base, \theta\circ \inn_\z). $
\end{lemma}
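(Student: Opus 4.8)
The plan is to unwind the construction of Section~\ref{sec:famG-curve} and to observe that replacing $\theta$ by $\bar\theta=\eta\circ\theta$ alters only the $G$-labelling of the monodromy and leaves the underlying coverings untouched. By Lemma~\ref{azioneautG} the bijection $\Phi$ sends $a=(H_a,f_a)$ to $\bara=\Phi(a)=(H_a,\eta\circ f_a)$, so the finite-index subgroup is unchanged, $H_\bara=H_a$. First I would note that $\basef_\bara$ is the pointed covering of $\mon$ determined by $H_\bara=H_a$, hence it is literally $\basef_a$ with $q_\bara=q_a$; pulling back the universal family $\monu\ra\mon$ then gives $E_\bara=E_a$ and $\bar q_\bara=\bar q_a$.

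Next I would treat the Galois cover. By \eqref{defu} the covering $\uu_\bara$ is characterized by $\im\uu_{\bara *}=\bar q_{a*}\meno(\ker f_\bara)$, and the one computation that matters is that $\eta$ is an automorphism of $G$, so $\ker f_\bara=\ker(\eta\circ f_a)=\ker f_a$. Thus $\ccs_\bara$ and $\ccs_a$ are the pointed coverings of $E_a$ with the same image in $\pi_1$ and therefore coincide; since the fibrewise compactification is unique by the Grauert--Remmert Theorem~\ref{grr}, I get $\cc_\bara=\cc_a$. As $\Phi$ is an order-preserving bijection and the transition maps $v_{ab}$ depend only on the inclusions $H_a\subset H_b$, which are preserved, the two collections are equal.

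For the final assertion I would reduce to the case just settled. Writing $f=\theta\circ\chi\meno:N_x\ra G$, for $\z\in N_x$ one has $f\circ\inn_\z=\inn_{f(\z)}\circ f$, so the homomorphism attached to the datum $\theta\circ\inn_\z$ is $\eta\circ f$ with $\eta:=\inn_{f(\z)}\in\Inn G\subset\Aut G$, and the first part applies verbatim.

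I do not expect a real obstacle: the whole content is the identity $\ker(\eta\circ f_a)=\ker f_a$, everything else being the bookkeeping that base, pulled-back family and Galois cover are determined by the data $H_a$ and $\ker f_a$ that $\eta$ fixes. The only point requiring care is to read the conclusion as equality of families of curves in the sense of Definition~\ref{colldef}: the $G$-action on $\cc_\bara$ differs from that on $\cc_a$ by $\eta$, but a collection by definition records only the families and their transition maps, so this discrepancy is invisible.
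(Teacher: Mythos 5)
Your proposal is correct and follows essentially the same route as the paper: both reduce everything to the observations that $H_{\bar a}=H_a$ and $\ker(\eta\circ f_a)=\ker f_a$, so that the base, the pulled-back family and the Galois cover are unchanged, and both handle the final assertion via $f\circ\inn_\z=\inn_{f(\z)}\circ f$. Your closing remark that the $G$-action is relabelled by $\eta$ while Definition \ref{colldef} records only the underlying families is a careful point the paper leaves implicit.
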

\begin{proof}
Let $\chi: \Ga_n \ra N_x$ be the isomorphism induced from the basis
$\base$. Set $f:= \theta \circ \chi\meno$ and
$\bar f := \bar \theta \circ \chi\meno =\eta\circ f$. By Lemma \ref
{azioneautG} we get a bijective correspondence
$I(x,\base,\theta) \ra I(x,\base, \bar\theta) $ which sends
$a=(H_a,f_a) $ to $ \bar a :=( H_{\bar a}, f_{\bar a})$, where
$ H_{\bar a} = H_a$, and $f_{\bar a} = \eta \circ f_a $.  It follows
that $K_{\bar a} = K_a$ and $\ker f_{\bar a} = \ker f_a$. Therefore
$\basef_{\bar a } = \basef _a, E_{\bar a } = E_a$,
$\ccs_{\bar a} = \ccs_a$ and $\cc_{\bar a} = \cc_a$. For the last
statement, just observe that
$\theta\circ \inn_z=\inn_{\theta(z)}\circ \theta$.
\end{proof}

\begin{lemma}
\label{fbetacollf}
Let $\theta \in \data^n(G)$ and $x, x'\in \monu$.  Let the notation
be as in Proposition \ref{esp-parall}: $\beta $ is a path in $\mon$
from $X$ to $X'$, $f^\beta$ represents the parallel transport along
$\beta$, $f^\beta (x_0)= x_0'$,
$\tilde{\beta}(t) = \tilde{H}(t,x_0)$. Then the collections
$\kolf(x,\base,\theta)$ and $\kolf(x',f^\beta_*(\base),\theta)$ are
equivalent.
\end{lemma}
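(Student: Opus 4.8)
The plan is to read off the data required by Definition \ref{kolfequiv} directly from the parallel transport along $\beta$, using the isomorphism of extension sets from Lemma \ref{ses-isomorfe} for the index sets and \ref{ex-pippone} for the biholomorphisms of bases. First I would fix the lift $\tilde\beta$ of $\beta$ to $\monu$ with $\tilde\beta(t)=\tilde H(t,x_0)$, so that $\tilde\beta\in\Omega(\monu,x,x')$ and $p\circ\tilde\beta=\beta$, and write $\tilde\beta_\#\colon K_x\to K_{x'}$, $\beta_\#\colon H_X\to H_{X'}$ for the induced isomorphisms. Since $f^\beta$ extends to a homeomorphism of $\PP^1$ carrying each $x_i$ to $x_i'$, the set $f^\beta_*(\base)$ is a geometric basis of $N_{x'}$, so the second collection is defined; moreover $f^\beta_*=\tilde\beta_\#|_{N_x}$ by Proposition \ref{sigmacancellet}. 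Hence $(\tilde\beta_\#,\beta_\#)$ together with $f^\beta_*$ is an isomorphism of the split sequences $(*_x)$ and $(*_{x'})$, and the basis $f^\beta_*(\base)$ induces $\chi'=f^\beta_*\circ\chi=\tilde\beta_\#\circ\chi$, so $\theta\circ(\chi')\meno=(\theta\circ\chi\meno)\circ\tilde\beta_\#\meno$. Therefore Lemma \ref{ses-isomorfe} yields an order preserving bijection $a\mapsto\bar a$ of $\coll(x,\base,\theta)$ onto $\coll(x',f^\beta_*(\base),\theta)$ with $H_{\bar a}=\beta_\#(H_a)$ and $f_{\bar a}=f_a\circ\tilde\beta_\#\meno$.

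Next I would build the biholomorphisms $w_a$. By \ref{ex-pippone} the path $\beta$ identifies the pointed covering $\basef_a\to(\mon,X)$ attached to $H_a$ with the pointed covering $\ove\basef_\bara\to(\mon,X')$ attached to $\beta_\#(H_a)=H_{\bar a}$; normalizing so that $w_a$ sends the endpoint of the $\beta$-lift of $y_a$ to the base point of $\ove\basef_\bara$, I obtain a biholomorphism $w_a\colon\basef_a\to\ove\basef_\bara$ of coverings of $\mon$, i.e. $q_\bara\circ w_a=q_a$. Because $E_a=q_a^*\monu$ and $\ove E_\bara=q_\bara^*\monu$, the identity $q_\bara\circ w_a=q_a$ gives a canonical identification $w_a^*\ove E_\bara=E_a$ together with a biholomorphism $W_a\colon E_a\to\ove E_\bara$ over $w_a$ satisfying $\bar q_\bara\circ W_a=\bar q_a$ (here I reuse the notation of \eqref{Eax} for the barred family).

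For property (1) I would compare the two $G$-coverings $\ccs_a\to E_a$ and $w_a^*\ove\ccs_\bara=W_a^*\ove\ccs_\bara\to E_a$ through their monodromies at $e_a$. The first is $f_a\circ\bar q_{a*}$. For the second, Lemma \ref{lem1} together with the naturality of $\bar q_{\bara*}$ rewrites the monodromy as $f_{\bar a}\circ\rho_\#\circ\bar q_{a*}$, where $\rho$ is the $\bar q_\bara$-projection of a path in $\ove E_\bara$ joining $W_a(e_a)$ to the base point $\bar e_\bara$. The decisive point is that, with base points chosen by $\beta$-lifting at every level, the lift of $\tilde\beta$ to $\ove E_\bara$ starting at $W_a(e_a)$ ends exactly at $\bar e_\bara$; one may then take $\rho=\tilde\beta$, so $\rho_\#=\tilde\beta_\#$. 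Since $f_{\bar a}\circ\tilde\beta_\#=f_a$ on $K_a$, the two monodromies coincide as homomorphisms to $G$, so the $G$-coverings are $G$-equivariantly isomorphic over $E_a$; by the uniqueness in the Grauert--Remmert Theorem \ref{grr}, exactly as at the end of the proof of Theorem \ref{costruzione}, this isomorphism extends to $\cc_a\cong w_a^*\ove\cc_\bara$.

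Finally, property (2) should follow formally: $w_a,w_b,v_{ab},\bar v_{\bar a\bar b}$ are all maps of coverings of $\mon$ over $\id_\mon$, and two such maps $\basef_a\to\ove\basef_{\bar b}$ coincide once they agree at a single point. Evaluating both composites at the $\beta$-lifted point $\beta_{y_a}(1)$ — where $v_{ab}$ carries $\beta$-lifts to $\beta$-lifts and each $w$ and $\bar v$ is normalized on base points — shows both equal the base point of $\ove\basef_{\bar b}$, giving $w_b\circ v_{ab}=\bar v_{\bar a\bar b}\circ w_a$. I expect the base-point bookkeeping in property (1) to be the main obstacle: the bijection $\Phi$ of Lemma \ref{ses-isomorfe} twists $f_a$ by $\tilde\beta_\#\meno$, and the entire argument hinges on arranging the $\beta$-lifts of base points coherently across $\basef_a$, $E_a$ and $\ccs_a$ so that the transport $\rho_\#$ is precisely $\tilde\beta_\#$ and cancels this twist.
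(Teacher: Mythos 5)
Your proof is correct and follows essentially the same route as the paper's: identify $(*_x)$ with $(*_{x'})$ via the triple $(f^\beta_*,\tilde\beta_\#,\beta_\#)$ (using Proposition \ref{sigmacancellet} for the left square), apply Lemma \ref{ses-isomorfe} to get the order-preserving bijection $a\mapsto\bara$ with $H_\bara=\beta_\#(H_a)$ and $f_\bara=f_a\circ\tilde\beta_\#\meno$, and then use the change-of-base-point principle of \ref{ex-pippone} to identify $\basef_a$, $E_a$, $\ccs_a$ with their barred counterparts before compactifying via Grauert--Remmert. You merely spell out the base-point bookkeeping, the monodromy comparison, and the verification of property (2) of Definition \ref{kolfequiv} that the paper leaves implicit in the phrase ``canonical isomorphisms''.
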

\begin{proof}
Let $\chi: \Ga_n \ra N_x$ be the isomorphism induced from the basis
$\base$. Set $f:= \theta \circ \chi\meno$ and
$\bar {f}:= f \circ (f^\beta_*)\meno$.  We show that, if
$a\in \coll(x, \base, \theta)$ and $\bar{a} = \Phi(a)$, where $\Phi$
is the map in \eqref{rule}, then the families $\cc_a \ra \basef_a$
and $\cc_\bara \ra \basef_\bara$ are canonically isomorphic.
Consider the diagram
\begin{equation*}
\begin{tikzcd}[column sep=small, row sep=scriptsize]
(*_x) & 1 \arrow{r}& N_x \arrow{d}{f^\beta_*} \arrow{rr}{i_*}
& & K_x \arrow{rr}{p_*} \arrow{d}{\tilde{\beta}_\#}{} && H_X \arrow{d}{\beta_\#} \arrow{r} & 1\\
(*_{x'}) & 1\arrow{r} & {N_{x'}} \arrow{rr}{{i_*}} & &{K_{x'}}
\arrow{rr}{{p}_*} &&H_{X'} \arrow{r}& 1
\end{tikzcd}
\end{equation*}
Assume $a=(H_a, f_a)$ and $\bar{a} = (H_\bara, f_\bara)$.  By the
definition of $\Phi$ we have $H_\bara = \beta_\# (H_a)$,
$K_\bara = \tilde{\beta}_\# (K_a)$,
$f_\bara = f_a \circ ( \tilde{\beta}_\#)\meno$,
$\ker f_\bara = \tilde{\beta}_\# (\ker f_a)$.  It follows from \ref
{ex-pippone} that there are canonical isomorphisms
$\basef_{\bar{a}} \cong \basef_a$, $ E_{\bar{a}} \cong E_a$,
$ \ccs_{\bar{a}} \cong \ccs_a$. By compactifying we get that the
families $\cc_a \ra \basef_a$ and $\cc_\bara \ra \basef_\bara$ are
isomorphic.
\end{proof}

\begin{lemma}\label{autGxaut**}
Let $(\eta,\nu) \in \Aut G \times \Aut^{**} \Ga_n$.  Then the
collections $\kolf (x, \base, \theta)$ and
$\kolf (x, \base,\eta\circ \theta \circ \nu\meno)$ are equivalent.
\end{lemma}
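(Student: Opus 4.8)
The plan is to reduce the statement to the two special cases already established, namely invariance under $\Aut G$ (Lemma~\ref{lemA}) and invariance under parallel transport (Lemma~\ref{fbetacollf}); the bridge between the abstract automorphism $\nu$ and a concrete parallel transport will be Proposition~\ref{traspa}. First I would dispose of $\eta$: applying Lemma~\ref{lemA} to the datum $\theta\circ\nu\meno$ and the automorphism $\eta$ gives $\kolf(x,\base,\eta\circ\theta\circ\nu\meno)=\kolf(x,\base,\theta\circ\nu\meno)$, so it suffices to prove that $\kolf(x,\base,\theta)$ and $\kolf(x,\base,\theta\circ\nu\meno)$ are equivalent.

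The key step is to realise $\nu$ geometrically. Let $\chi:\Ga_n\ra N_x$ be the isomorphism attached to $\base$ and put $\bar\nu:=\chi\circ\nu\circ\chi\meno$; by the remark following Definition~\ref{sayautstar}, the hypothesis $\nu\in\Aut^{**}\Ga_n$ gives $\bar\nu\in\Aut^{**}(N_x)$. Proposition~\ref{traspa} then produces a class $[\alpha]\in\pi_1(\mon,X)$ with a lift $\tilde\alpha$ satisfying $\tilde\alpha(0)=\tilde\alpha(1)=x_0$, a parallel transport $f^\alpha$ with $f^\alpha(x_0)=x_0$, and an element $\z\in N_x$ such that $\bar\nu=\inn_\z\circ f^\alpha_*$. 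Since $\alpha$ is a loop at $X$, I can invoke Lemma~\ref{fbetacollf} with $\beta=\alpha$ (so that $x'=x$), which tells me that $\kolf(x,\base,\theta)$ is equivalent to $\kolf(x,f^\alpha_*(\base),\theta)$.

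It then remains to identify $\kolf(x,f^\alpha_*(\base),\theta)$ with $\kolf(x,\base,\theta\circ\nu\meno)$. Because each collection depends on the choices only through $x$ and the induced epimorphism $N_x\tra G$, I would just compare the two morphisms. Writing $f:=\theta\circ\chi\meno$, the morphism of $\kolf(x,f^\alpha_*(\base),\theta)$ is $g:=\theta\circ(f^\alpha_*\circ\chi)\meno=f\circ(f^\alpha_*)\meno$, whereas that of $\kolf(x,\base,\theta\circ\nu\meno)$ is $f\circ\bar\nu\meno$. Substituting $\bar\nu=\inn_\z\circ f^\alpha_*$ and using the identity $g\circ\inn_{\z\meno}=\inn_{g(\z\meno)}\circ g$ yields $f\circ\bar\nu\meno=g\circ\inn_{\z\meno}=\inn_{g(\z\meno)}\circ g$, so the two morphisms differ only by postcomposition with the inner automorphism $\inn_{g(\z\meno)}\in\Inn G\subset\Aut G$. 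By Lemma~\ref{lemA} this postcomposition does not alter the collection, whence $\kolf(x,f^\alpha_*(\base),\theta)=\kolf(x,\base,\theta\circ\nu\meno)$. Chaining this equality with the equivalence from Lemma~\ref{fbetacollf} and with the reduction of the first step proves the lemma.

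I expect the main obstacle to be the passage $\bar\nu=\inn_\z\circ f^\alpha_*$ in the second step: this is exactly the content of Proposition~\ref{traspa}, and it ultimately rests on $\PMod(\C^{**})$ being trivial, which is what makes $\mathrm{Push}$ (and hence $\tilde\varepsilon$) surjective so that every element of $\Aut^{**}(N_x)$ arises from a loop in $\mon$. The rest is bookkeeping, the only delicate point being to keep track of whether an inner automorphism acts on the source $N_x$ or on the target $G$, and to absorb a source-inner factor into $\Aut G$ through $g\circ\inn_w=\inn_{g(w)}\circ g$.
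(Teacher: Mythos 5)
Your proof is correct and follows essentially the same route as the paper: realise $\bar\nu=\chi\circ\nu\circ\chi\meno$ as $\inn_\z\circ f^\alpha_*$ via Proposition~\ref{traspa}, so that the new morphism becomes $\eta\circ f\circ(f^\alpha_*)\meno\circ\inn_{\z\meno}$, and then absorb the $\Aut G$-factor and the source-inner factor by Lemma~\ref{lemA} and the parallel-transport factor by Lemma~\ref{fbetacollf}. The only difference is that you spell out the final bookkeeping (including the identity $g\circ\inn_{\z\meno}=\inn_{g(\z\meno)}\circ g$) which the paper compresses into ``the statement follows from the previous two Lemmas.''
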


\begin{proof}
We have
$ \bar\nu:= \chi \circ \nu \circ \chi\meno \in \Aut^{**} N_x$.  Set
$\bar{\theta} : = \eta\circ \theta \circ \nu\meno$,
$f:=\theta\circ \chi\meno : N_x \tra G$, and
$\bar {f} := \bar \theta \circ \chi \meno = \eta \circ f\circ \bar
\nu\meno$.  By Proposition \ref{traspa}, there are
$[\alpha]\in \pi_1(\mon, X)$, a lifting $\tilde \alpha $ of $\alpha$
with $\tilde\alpha(0) = \tilde\alpha(1)=x_0$, and a parallel
transport $f_t^\alpha $ such that
$f_t^\alpha (x_0) = \tilde\alpha(t)$ and
$\z\in \pi_1(\PP^1-X , x_0)$ such that
$\bar{\nu} = \inn_\z \circ f^{\alpha}_*$. Note that, in particular,
$f^\alpha (x_0) = x_0$. We get
$\bar f=\eta \circ f \circ (f^{\alpha}_*)^{-1}\circ
\inn_{z^{-1}}$. The statement follows from the previous two Lemmas.
\end{proof}

\begin{proof}[Proof of Theorem \ref{maindep}]

Since changing geometric bases of $N_x$ adapted to $X$ corresponds
to acting with $\Aut^{**}\Gamma_n$, by the previous Lemma it follows
that if the point $x$ is fixed, changing the adapted basis does not
matter. Next fix $x, \bar{x} \in \monu$. Choose a path
$\tilde{\beta} $ in $\monu $ joining $x$ to $\bar{x}$.  Set
$\beta := p\circ \tilde\beta$ and let $f^\beta$ be a parallel
transport such that $f^\beta(x_0) = \bar{x}_0$.  Let $\base$ be an
adapted basis at $x$. Then $f^\beta_*\base$ is an adapted basis at
$\bar{x}$.  By Lemma \ref{fbetacollf} we get that $\kolf(x, \base, \theta) $ and $\kolf (\bar{x}, f^\beta_*\base, \theta)$ are equivalent.
In other words we have independence from $x$ and $\base$ as long as $\base $ is adapted to $x$.
We also have that $\theta$ only matters through its $\Aut G \times \Aut^{**}\Ga_n$-orbit by Lemma \ref{autGxaut**}.
It remains to show independence from the $\Aut^*\Ga_n$-orbit.  It
follows from the definitions in \ref{bbbendef} that this is
equivalent to showing that if $x\in \monu$, $\base$ is a basis
adapted to $x$ and $\bbase$ is an arbitrary basis of
$\pi_1(\puno-X,x_0)$, then the collections $\kolf(x, \base, \theta)$
and $\kolf (x, \bbase, \theta)$ are equivalent.  Let us prove this
statement.  There is a permutation $\sigma \in S_n$ such that
$\bbase$ is adapted to $(x_0, x_{\sigma_1}, \lds, x_{\sigma_n})$.
Define
\begin{gather*}
\tau: \mon \lra \mon, \tau(x_1, \lds, x_n) := (x_{\sigma_1}, \lds, x_{\sigma_n})\\
\tilde{\tau}: \monu \lra \monu, \tau(x_0,x_1, \lds, x_n) : =
(x_0,x_{\sigma_1}, \lds, x_{\sigma_n}).
\end{gather*}
Set $\bar{x} = \tilde{\tau}(x)$ and $\bar{X} = \tau(X)$.  By the
previous results we know that $\kolf(x,\base,\theta)$ and
$\kolf(\bar{x}, \bbase, \theta)$ are equivalent.  It remains to
check that also $\kolf(\bar{x}, \bbase, \theta)$ and
$\kolf(x, \bbase, \theta)$ are equivalent.  Consider the diagram:
\begin{equation*}
\begin{tikzcd}[column sep=small, row sep=scriptsize]
(*_x) & 1 \arrow{r}& N_x \arrow{d}{\id_{N_x}} \arrow{rr}{i_*}
& & K_x \arrow{rr}{p_*} \arrow{d}{\tilde{\tau }_*} && H_X \arrow{d}{\tau_*} \arrow{r} & 1\\
(*_{\bar{x}}) & 1\arrow{r} & {N_{\bar{x}}} \arrow{rr}{{i_*}} &
&{K_{\bar{x}}} \arrow{rr}{{p}_*} & & H_{\bar{X}} \arrow{r}& 1
\end{tikzcd}
\end{equation*}
To check commutativity: observe that $\tilde{\tau}$ sends the fiber
over $X$ to the fiber over
$\bar{X}:= (x_{\sigma_1}, \lds, x_{\sigma_n})$, i.e.
$\tilde{\tau} (\PP^1-X) \times \{X\} = (\PP^1-X) \times \{\bar{X}\}
$ and on the first factor it is the identity map.  We use this
diagram with $f= \bar{f} =\theta \circ \bar{\chi}\meno$. We get the
usual correspondence $a\mapsto \bara$,
$\coll(x,\bbase,\theta) \ra \coll(\barx, \bbase, \theta)$ with
\begin{gather}
\label{auffa}
H_\bara = \tau_*(H_a), \qquad K_\bara = \tilde{\tau}_*(K_a),
\qquad \ker f_\bara = \tilde{\tau}_*(\ker f_a) .
\end{gather}
Consider the diagram
\begin{equation*}
\begin{tikzcd}[sep = .35 cm]
& (\ccs_\bara ,z_\bara)\arrow{rr}{u_\bara} & &(E_\bara,e_\bara)
\arrow{dd}[near end]{\burp_\bara}
\arrow{rr}{\bar{q}_\bara} &   & \monu \arrow{dd}{p} \\
(\ccs_a,z_a) \arrow{rr}{u_a} \arrow[dashed]{ru}{\hat{\w}_a} & &
(E_a,e_a) \arrow{dd}{\burp_a} \arrow[dashed]{ru}{\tilde{\w}_a}
\arrow[crossing over]{rr}[near end]{\bar{q}_a} & & \monu
\arrow{ru}{\tilde{\tau}}   & \\
&&  & (Y_\bara, y_\bara)   \arrow{rr}[near end]{q_\bara} & & (\mon, \bar X)    \\
&& (Y_a,y_a) \arrow[dashed]{ru}[swap]{\w_a}
\arrow{rr}[swap]{q_a} & & (\mon, X) \arrow[crossing over,
leftarrow]{uu}[near end,swap]{p} \arrow{ru}{\tau} &
\end{tikzcd}
\end{equation*}
By a repeated use of the lifting theorem and using \eqref{auffa} we
can show the existence of homeomorphisms $\w_a, \tilde{\w}_a$ and
$\hat {\w}_a$ making the diagram commute. Indeed
$(\im (\tau \circ q_{a})_*) =\tau_*(H_a)= \im q_{\bara *}$ by the
first equation in \eqref{auffa}. So $\w_a$ is the isomorphims
between the pointed coverings $\tau \circ q_a$ and $q_\bara$. By the
same argument, using the second equation in \eqref{auffa}, we get
the isomorphism $\tilde{\w}_a$.  Consider the cube on the right in
the diagram.  All its faces (except the left one) commute. But then
$ q_\bara \burp_\bara \tilde{\w}_a = p \bar{q}_\bara \tilde{w} _a =
p \tilde{\tau} \bar{q}_a = \tau p \bar{q}_a = \tau q_a \burp_a =
q_\bara \w_a \burp_a$.  So $\burp_\bara \tilde{\w}_a $ and
$\w_a \burp_a$ lift the same map with respect to $q_\bara$. Since
$\burp_\bara \tilde{\w}_a (e_a)= y_\bara = \w_a \burp_a (e_a)$ we
conclude that $\burp_\bara \tilde{\w}_a = \w_a \burp_a$.

Finally consider the horizontal square on the left of the
diagram. We want to show that
$\im (\tilde{\w}_a \circ u_a) _* = \im u_{\bara *}$. We compose with
the injective morphism $\bar{q}_{\bara *}$ and compute
$ \bar{q}_{\bara *} ( \im (\tilde{\w}_a \circ u_a) _*) =
\tilde{\tau}_*q_{a*} ( \im u_{a*}) = \tilde{\tau}_* ( \ker f_a)$. By
the third equation in \eqref{auffa} this equals
$ \ker f_\bara = \bar{q}_{\bara *} ( \im u_{\bara *})$.  Thus
$ \bar{q}_{\bara *} ( \im (\tilde{\w}_a \circ u_a) _*) =
\bar{q}_{\bara *} ( \im u_{\bara *})$ and
$\im (\tilde{\w}_a \circ u_a) _* = \im u_{\bara *}$. So the lifting
theorem again yields existence of an isomorphism $\hat{\w}_a$ making
everything commutative. The homeomorphisms $\w_a, \tilde{\w}_a $ and
$\hat {\w}_a$ are in fact biholomorphisms as observed in \ref
{ex-pippone}. It follows that $\pi_\bara \hat{\w}_a = \w_a \pi_a$.
By the uniqueness statement in Grauert-Remmert Extension Theorem,
$\tilde{\w}_a$ extends to a biholomorphism between $\cc_a$ and
$\cc_\bara$.  Thus $\cc_a \cong \tilde{\w}_a^* \cc_\bara$.

Property (2) in the Definition \ref{kolfequiv} follows again by the
lifting theorem:
\begin{equation*}
\begin{tikzcd}
\basef_a \arrow{rrd}{v_{ab}} \arrow{rr}{\w_a} \arrow[swap]{rdd}{q_a}&  & \basef_\bara \arrow{ddr}[near end,swap]{q_\bara}    \arrow[yshift=0.3ex,xshift=-0.2ex]{rrd}{\bar{v}_{\bara \bar{b}} } && &&  \\
& & \basef_b\arrow[crossing over]{rr}{\w_b} \arrow{ld}{q_b} && \basef_{\bar{b}} \arrow{ld}{q_{\bar{b}}} && \\
& \mon \arrow[swap]{rr}{\tau} & & \mon &&&
\end{tikzcd}
\end{equation*}
we have
$\bar{q}_{\bar{b}} \bar{v}_{\bara \bar{b}} \w_a = q_\bara \w_a =\tau
q_a =\tau q_b v_{ab} = q_{\bar{b}} \w_b v_{ab}$. So
$ \bar{v}_{ab} \w_a$ and $ \w_a v_{ab}$ lift the same map. Moreover
$ \bar{v}_{\bara \bar{b}} \w_a (y_a) = \bar{v}_{ab}( y_\bara)=
y_{\bar{b}}= \w_b (y_b ) =\w_b v_{ab} (y_a)$. So the two maps
coincide: $ \bar{v}_{ab} \w_a = \w_a v_{ab}$. This proves (2).
\end{proof}

\begin{teo}\label{famtipotopo}
Let $G$ be a finite group and $\theta\in \datan(G)$. Choose a point
$x\in \M_{0,n+1}$ and a geometric basis $\base$ of $N_x$. Let
$\pi_a : \cc_a \ra \basef_a$ be any family in the collection
$\kolf (x, \base, \theta)$. Then every curve in the family has the
topological type given by
$[\theta]\in \datan(G)/\Aut G \times \auta$. Conversely, every
algebraic curve with a $G$-action of the topological type given by
$[\theta]$ is (unmarkedly) $G$-isomorphic to some fiber. Moreover,
there are only finitely many such fibers.
\end{teo}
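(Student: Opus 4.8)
The plan is to prove the three assertions separately---constancy of the topological type within a family, surjectivity onto $G$-curves of type $[\theta]$, and finiteness of the fibres realizing a given $C$---using the construction of $\pi_a:\cc_a\ra\basa$ together with Theorem \ref{maindep}.

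For the first assertion I would begin by computing the topological type of the fibre over the base point $y_a$. By \eqref{defu} the Galois cover $u_a$ has monodromy $f_a\circ\bar q_{a*}:\pi_1(E_a,e_a)\ra G$. The fibre $E_{a,y_a}$ is canonically $\PP^1-X$; denoting by $j:\PP^1-X\hookrightarrow E_a$ its inclusion, one has $\bar q_a\circ j=i$ and hence $\bar q_{a*}\circ j_*=i_*$. Restricting the monodromy to the fibre gives $f_a\circ\bar q_{a*}\circ j_*=f_a\circ i_*=f=\theta\circ\chi\meno$, the middle equality being the defining property $f_a i=f$ of an extension. Thus the unramified cover $\ccs_{a,y_a}\ra\PP^1-X$ has monodromy $\theta\circ\chi\meno$, so $\cc_{a,y_a}$ is the $G$-curve $\Sigma^\theta$ of Section \ref{sec:tipitopo} and carries the type $\mathscr{F}_{x,\base}(\theta)=[\theta]$. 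To reach an arbitrary fibre I would note that $\pi_a$ is a proper holomorphic submersion (properness because $\cc_a\ra\PP^1\times\basa$ is finite and $\PP^1\times\basa\ra\basa$ is proper) carrying a fibrewise $G$-action with compact fibres; the equivariant parallel transport of Lemma \ref{lemmatrasporto} then yields $G$-diffeomorphisms between any two fibres over a path in $\basa$. Since $\basa$, being the covering of $\mon$ attached to $H_a$, is connected, all fibres are $G$-diffeomorphic and share the type $[\theta]$.

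For the converse I would take a $G$-curve $C$ with $C/G\cong\PP^1$ of type $[\theta]$ and $n$ branch points. After a M\"obius transformation its branch locus becomes a point $X'\in\mon$; choosing a base point $x_0'$, a geometric basis $\base'$ adapted to $(x_0',X')$ and a point over $x_0'$, the monodromy of $C$ produces a datum $\theta'\in\datan(G)$, and by Theorem \ref{teotipitopo} the hypothesis that $C$ has type $[\theta]$ means exactly $\theta'\in[\theta]$. Now Theorem \ref{maindep} applies: since $\theta'\in[\theta]$, the collection $\kolf(x,\base,\theta)$ is equivalent to $\kolf(x',\base',\theta')$ with $x'=(x_0',X')$, and under such an equivalence the given family $\pi_a$ corresponds to a family $\pi_\bara$ with $\cc_a\cong\w_a^*\cc_\bara$, so the two have the same fibres up to biholomorphism. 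The base point $y_\bara$ lies over $X'$, and by the first part the fibre $\cc_{\bara,y_\bara}$ has monodromy $\theta'\circ(\chi')\meno$, i.e. exactly that of $C$. Equality of the monodromies gives an isomorphism of the two unramified $G$-covers of $\PP^1-X'$, which by the uniqueness in the Grauert--Remmert Theorem \ref{grr} extends to a $G$-isomorphism $\cc_{\bara,y_\bara}\cong C$. Hence $C$ is a fibre of $\pi_\bara$, and therefore of $\pi_a$.

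For finiteness I would argue through the branch divisors. Any $G$-isomorphism $\cc_{a,y}\cong_G C$ descends to a biholomorphism of $\PP^1$ carrying the configuration $q_a(y)$ onto that of $C$, so $q_a(y)$ and $X'$ have $\PGL(2,\C)$-equivalent underlying point sets. Since $\mon$ meets each $\PGL(2,\C)$-orbit of $\confpn$ in a single point (see \ref{omegan}), the set $\Xi\subset\mon$ of points whose underlying set is projectively equivalent to that of $X'$ is finite (of cardinality at most $n!$), and every fibre $G$-isomorphic to $C$ lies over $\Xi$. As $H_a$ has finite index in $H_X$, the covering $q_a$ has finite degree, whence $q_a\meno(\Xi)$ is finite and only finitely many fibres are $G$-isomorphic to $C$. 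The delicate point is the converse, and specifically the bookkeeping in reducing to the fibre over a base point: one must check that the equivalence of collections of Theorem \ref{maindep} preserves the property of being a fibre and matches the fixed index $a$ with an index $\bara$ whose base point sits over $X'$. The residual $\Aut G$ ambiguity---coming both from the automorphism $\eta$ allowed in an unmarked $G$-isomorphism and from the choice of a point over $x_0'$, which alters the monodromy only by an inner automorphism---must be absorbed into the orbit $[\theta]$; tracking it carefully is what makes the identification of $C$ with a single fibre go through.
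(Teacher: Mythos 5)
Your proposal is correct and follows essentially the same route as the paper's proof: equivariant parallel transport (Lemma \ref{lemmatrasporto}) for constancy of the type along the connected base, Theorem \ref{maindep} to identify $C$ with the central fibre of an equivalent collection, and the rigidity of M\"obius transformations fixing $0,1,\infty$ for finiteness. The only difference is that you make explicit the computation of the monodromy of the fibre over the base point (which the paper leaves implicit in the construction), and the ``delicate points'' you flag at the end are indeed resolved exactly as you suggest, by the definition of equivalence of collections and the $\Aut G$ freedom built into the notion of unmarked topological type.
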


\begin{proof}
Consider $\pi_a : \cc_a \ra \basef_a$ and let $y,y'\in
\basef_a$. Let $\beta $ be a path in $\basef_a$ from $y$ to $y'$,
and let $f^\beta$ represent the parallel transport along $\beta$. By
Lemma \ref{lemmatrasporto}, we get a $G$-equivariant diffeomorphism
$\cc_{\ax,\yy}\ra \cc_{\ax,\yy'}$. Hence the $G$-actions on
$\cc_{\ax,\yy}$ and $\cc_{\ax,\yy'}$ have the same topological
type. This proves the first statement. Now let $C$ be an algebraic
curve such that $G$ acts effectively on $C$ in such a way that
$C/G \cong \puno$. We get the ramified covering $\pi : C \ra
\PP^1$. By acting via $\PGL(2, \C)$, one can move any three branch
points of $\pi$ to $0,1$ and $\infty$. We can thus assume that the
set of critical values of $\pi : C \ra \PP^1$ coincides with
$Y\in \M_{0,n}$.  Set $C^*:=\pi^{-1}(\puno \setminus Y)$.  Fix a
point $y_0\in \puno-Y$ and consider the monodromy
$f: \pi_1(\puno-Y, y_0)\ra G$ associated with
$\pi|_{C^*}:C^*\ra \puno \setminus Y$. Finally fix a basis
${\base}'$ of $\pi_1(\puno-Y, y_0)$ to $Y$. Let
$\chi:\Gamma_n\ra\pi_1(\puno-Y, y_0)$ denote the associated
isomorphism. Denote by $\theta'=f \circ \chi: \Gamma_n\ra G$ the
datum associated with $C$. We get a collection
$\kolf (y, \base', \theta')$. Assume that $C$ has the same
topological type of $G$-action as $[\theta]$, namely that
$[\theta]=[\theta']\in \datan(G)/\Aut^{**}\Gamma_n\times \Aut G$. By
Theorem \ref{maindep} the collections $\kolf (x, \base, \theta)$ and
$\kolf (y, \base', \theta')$ are equivalent. Thus there exist
$\bara\in \coll(y, \base', \theta')$ and a biholomorphism
$\w_a: \basef_a \ra \ove\basef_\bara$ as in Definition
\ref{kolfequiv}. In particular, $\cc_a\cong \w_a^*
\ove\cc_\bara$. It follows that $C$, which is the central fiber for
$\pi_{\bara}:\cc_\bara\ra \basef_\bara$, is $G$-isomorphic to some
fiber of $\pi_a : \cc_a \ra \basef_a$.  To check that only finitely
many fibers can be $G$-isomorphic to $C$ we argue as follows.  For
any $\sigma \in S_n$ there is a unique $g_\sigma \in \Aut \PP^1$
such that $g_\sigma (y_{\sigma_ {n-2}} ) = 0$,
$g_\sigma (y_{\sigma_ {n-1}} ) = 1$,
$g_\sigma (y_{\sigma_ {n}} ) = \infty$. If $f : C \ra \cc_{a,y}$ is
a $G$-isomorphism for some $y\in \basef_a$, then $f$ descends to an
isomorphism $\bar f\in \Aut \PP^1$ that maps branch points to branch
points. So if $ X := q_a (y)$, we have
$\bar f(\{y_1, \lds, y_n\}) = \{x_1, \lds, x_n\}$. Then there is a
permutation $\sigma$ such that $\bar f(y_{\sigma_i}) = x_i$ for any
$i=1, \lds, n$. So $\bar f= g_\sigma$ and
$X= (g_\sigma(y_1), \lds, g_\sigma(y_n))$. This shows that there is
a finite number of possibilities for $X$, so a finite number of
possibilities for $y$ since $q_a$ is finite.
\end{proof}

\section{The centerless case}\label{sec:centerless}

If the group $G$ has trivial center, the whole discussion in Sections
\ref{sec:groups},\ref{sec:famG-curve} and \ref{sec:indip-choices} is
greatly simplified.

Indeed, let us go back to the setting at the beginning of Section
\ref{sec:groups} and let us consider again the sequence $(*)$.

\begin{teo}\label{centerless}
If the sequence $(*)$ at p. \pageref{pagestar} splits and
$Z(G)=\{1\}$, then there exists a minimum $a_{min}\in \collf$ and it
is unique.
\end{teo}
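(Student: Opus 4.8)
The plan is to use the splitting hypothesis to identify, via Lemma \ref{ses-isomorfe}, the sequence $(*)$ with a semidirect product $K = N \rtimes_\eps H$ for some $\eps : H \to \Aut N$, and then to write down the minimum explicitly. The candidate is $a_{min} = (H_0, f_0)$, where $H_0$ is the set of all $h\in H$ over which $f$ can be ``conjugated along $\eps$'':
\[
H_0 := \{\, h \in H : \text{there is } g \in G \text{ with } \inn_g \circ f = f \circ \eps_h \,\}.
\]
The decisive observation, and the only place the hypothesis $Z(G)=\{1\}$ is used, is that for each $h \in H_0$ this element $g$ is \emph{unique}: if $\inn_{g_1}\circ f = \inn_{g_2}\circ f$, then $g_2\meno g_1$ centralizes $\im f = G$ (as $f$ is onto), so $g_2\meno g_1 \in Z(G) = \{1\}$ and $g_1 = g_2$. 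Write $\phi(h)$ for this unique element.

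First I would check that $H_0$ is a subgroup of $H$ and $\phi : H_0 \to G$ a homomorphism. This is immediate from the defining relation: composing $\inn_{\phi(h_1)}\circ f = f\circ \eps_{h_1}$ with $\inn_{\phi(h_2)}\circ f = f\circ\eps_{h_2}$ yields $\inn_{\phi(h_1)\phi(h_2)}\circ f = f\circ \eps_{h_1h_2}$, so $h_1h_2\in H_0$ and, by uniqueness, $\phi(h_1h_2)=\phi(h_1)\phi(h_2)$; the identity and inverses are handled the same way. Next I would note that $H_0$ has finite index: since $(*)$ splits we already know $\collf\neq\vacuo$, so there is some $(H_a,f_a)\in\collf$, and the last step below shows $H_a\subseteq H_0$; as $H_a$ has finite index, so does $H_0$. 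Applying Lemma \ref{lemmagruppi} to the pair $f$ and $\phi$ then produces a morphism $f_0 : N\rtimes_\eps H_0 \to G$ extending both, so that $a_{min}=(H_0,f_0)\in\collf$.

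It remains to verify that $a_{min}$ is the minimum for $\geq$. Let $a=(H_a,f_a)\in\collf$ be arbitrary. Restricting $f_a$ to the copy of $H_a$ inside $K_a = N\rtimes_\eps H_a$ gives a homomorphism $\phi_a : H_a\to G$, and the ``only if'' direction of Lemma \ref{lemmagruppi} forces $\inn_{\phi_a(h)}\circ f = f\circ\eps_h$ for every $h\in H_a$. Hence every $h\in H_a$ satisfies the defining condition of $H_0$, so $H_a\subseteq H_0$, and by the uniqueness of the conjugating element $\phi_a(h)=\phi(h)$, i.e. $\phi_a = \phi\restr{H_a}$. Since a morphism out of $K_a$ is determined by its restrictions to $N$ and to $H_a$, this gives $f_a = f_0\restr{K_a}$, that is $a\geq a_{min}$. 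Thus $a_{min}$ is a minimum, and the minimum of a partial order is automatically unique.

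The main obstacle --- in fact the whole crux --- is the uniqueness of $\phi(h)$ afforded by $Z(G)=\{1\}$: it is exactly this that makes $H_0$ closed under the group operations, turns $\phi$ into a genuine, choice-free homomorphism, and forces every extension to be the restriction of $(H_0,f_0)$. Without centerlessness one is left with a torsor of choices for each $\phi(h)$ and no canonical largest extension, which is the difficulty recorded in the introduction.
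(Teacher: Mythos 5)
Your proof is correct and follows essentially the same route as the paper's: your $H_0$ coincides with the subgroup $H'''$ that the paper constructs in the notation of Lemma \ref{grupplemma}, the hypothesis $Z(G)=\{1\}$ is exploited in the same way to make the conjugating element unique and hence $\phi$ a genuine homomorphism, and the minimality argument via Lemma \ref{lemmagruppi} is the same. The only cosmetic difference is that you obtain the finite index of $H_0$ by bootstrapping from the non-emptiness of $\collf$, whereas the paper reads it off the chain $H'\subset H'''\subset H''$ with $H'$ of finite index.
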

\begin{proof}
With the notation of Lemma \ref{grupplemma}, set
$H''':=\{h\in H'':\ \tilde\varepsilon_h\in \Inn G\}$. Note that
$H'\subset H'''\subset H''$ and that $H'''$ has finite index in
$H''$ and in $H$.  By assumption the map $G \ra \Inn G$ is
bijective.  So for every $h\in H'''$, there is a unique element of
$G$, denoted by $\phi(h)$, such that
$\tilde \varepsilon_h=\inn_{\phi(h)}$. We get a map
$\phi: H'''\ra G$. Since $\tilde \varepsilon$ is a morphism, we have
$\inn_{\phi(hh')}=\inn_{\phi(h)\phi(h')}$ and, since $Z(G)=\{1\}$,
this implies that $\phi$ is a morphism. Also, by construction,
$\phi$ satisfies $\inn_{\phi(h)}\circ f=f\circ
\varepsilon_h$. Therefore, by Lemma \ref{lemmagruppi}, there exists
a morphism $\tilde f: N\rtimes_\eps H''' \ra G$ extending $f$ such
that $\tilde f|_{H'''}=\phi$. Thus $(H''',\tilde f)\in
\collf$. Moreover, since $\phi$ is unique, so is $\tilde f$. Now let
$a=(H_a, f_a)\in \collf$ and observe that, by Lemma
\ref{lemmagruppi}, every $h\in H_a$ satisfies \eqref{exmor}. It
follows that $H_a\subset H'''$ and $\phi_a=\phi|_{H_a}$ and thus we
conclude that $a=(H_a, f_a)\geq (H''', \tilde f)$. Uniqueness of the
minimum in obvious in any ordered set.
\end{proof}

Next let $N_x$, $K_x$ and $H_X$ be as in \eqref{notax} and consider
the splitting exact sequence $(*_x)$. As usual, choose a geometric
basis $\mathscr{B}=\{[\alfa_i]\}_{i=1}^n$ of $N_x$, let
$\chi: \Ga_n \ra N_x$ be the isomorphism induced from the basis
$\base$, and, for a datum $\theta: N_x\ra G$, set
$f:= \theta \circ \chi\meno: N_x\ra G $. Theorem \ref{centerless}
applied to $(*_x)$ reads as follows:

\begin{teo}\label{casocenterless}
If $G$ has trivial center, then there exists a minimum
$\ax_{min}\in\coll(x,\base, \theta)$ and it is unique.
\end{teo}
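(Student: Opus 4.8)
The plan is to observe that this is nothing but the specialization of Theorem \ref{centerless} to the sequence $(*_x)$ equipped with the epimorphism $f := \theta \circ \chi\meno$, so that the whole argument reduces to verifying the hypotheses of that theorem and then invoking it.

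First I would recall that the sequence
\[ (*_x) \qquad 1 \ra N_x \ra K_x \ra H_X \ra 1 \]
is exact and \emph{splits}: this is the top row of \eqref{seqpuni}, and an explicit section $s_*$ of $p_*$ was produced in \S\ref{succesioniesatte}. Next I would check that $f = \theta\circ\chi\meno : N_x \tra G$ is an epimorphism onto a finite group: $\theta$ is a datum, hence surjective, $\chi$ is an isomorphism, so $f$ is surjective, and $G$ is finite by hypothesis. Finally, $N_x = \pi_1(\PP^1-X, x_0)$ is free of finite rank, hence finitely generated, so all the standing assumptions used inside the proof of Theorem \ref{centerless} (through Lemma \ref{grupplemma}) are in force.

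Having checked that $(*_x)$ splits and that $Z(G) = \{1\}$ by assumption, I would simply apply Theorem \ref{centerless} to the pair $(*_x, f)$. It yields the existence and uniqueness of a minimum in $\collf = \coll(*_x, \theta\circ\chi\meno) = \coll(x, \base, \theta)$, which is exactly the asserted $\ax_{min}$. I do not expect any genuine obstacle here: the substantive content---building the finite-index subgroup $H'''$ on which each $\tilde\varepsilon_h$ is inner, using $Z(G)=\{1\}$ to upgrade the assignment $h\mapsto\phi(h)$ to a homomorphism, and showing that every extension dominates $(H''',\tilde f)$---is already carried out in the proof of Theorem \ref{centerless}. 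The only remaining task is to confirm that its abstract hypotheses specialize correctly to $(*_x,f)$, which is immediate from the preceding two paragraphs.
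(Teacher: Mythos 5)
Your proposal is correct and matches the paper exactly: the paper derives Theorem \ref{casocenterless} by applying Theorem \ref{centerless} to the split exact sequence $(*_x)$ with $f=\theta\circ\chi\meno$, which is precisely your argument. Your explicit verification of the hypotheses (splitting of $(*_x)$, surjectivity of $f$, finiteness of $G$, finite generation of $N_x$) is a reasonable elaboration of what the paper leaves implicit.
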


Thus in this case by choosing the minimum we have a canonical choice
of a family.
%
Thus, if the center of $G$ is trivial, the choice of a point $x\in
\M_{0,n+1}$, a geometric basis
$\mathscr{B}=\{[\alfa_i]\}_{i=1}^n$, and a datum $\theta: N_x\ra
G$ yields a well-defined minimum family \begin{gather*} \pi_{(x,\base,
\theta)} : \cc_{(x,\base, \theta)}\ra \basef_{(x,\base, \theta)},
\end{gather*}
and we can forget about the whole collection.  Moreover by Theorem
\ref{maindep} changing $x$ or $\mathscr{B}$ or
$\theta$ inside its $\Aut G \times
\auta$- orbit amounts to passing from a collection to an equivalent
one. Since equivalence is order preserving it naturally maps the
minimum to the minumum. This yields the following.

\begin{teo} If
$G$ has trivial center, then up to isomorphism the family
$\pi_{(x,\base, \theta)} : \cc_{(x,\base, \theta)}\ra
\basef_{(x,\base, \theta)}$ is independent of the choices of
$x$ and $\base$ and only depends on the $\Aut G \times
\auta$- orbit of $
\theta$.  In particular the family $\pi_{(x,\base, \theta)} :
\cc_{(x,\base, \theta)}\ra \basef_{(x,\base,
\theta)}$ only depends on the topological type $[\theta]$.
\end{teo}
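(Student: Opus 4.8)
The plan is to deduce the statement directly from Theorem \ref{maindep} and Theorem \ref{casocenterless}, the only additional ingredient being the elementary fact that an order-preserving bijection of directed sets carries the minimum to the minimum. By Theorem \ref{casocenterless}, for each choice of $(x,\base,\theta)$ the directed set $\coll(x,\base,\theta)$ has a unique minimum $\ax_{min}$, and the minimum family $\pi_{(x,\base,\theta)}$ is by definition the member of the collection $\kolf(x,\base,\theta)$ indexed by $\ax_{min}$. Hence it suffices to show that whenever two collections are equivalent in the sense of Definition \ref{kolfequiv}, the families sitting over their respective minima are isomorphic.

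For the key step, suppose $\kolf(x,\base,\theta)$ and $\kolf(x',\base',\theta')$ are equivalent, with order-preserving bijection $a\mapsto\bara$ of $\coll$ onto $\ove\coll$ and biholomorphisms $\w_a$ as in Definition \ref{kolfequiv}. I would first check that $\overline{\ax_{min}}$ is the minimum of $\ove\coll$: given $\bar c\in\ove\coll$, write $\bar c=\bara$ for some $a\in\coll$; since $a\geq\ax_{min}$ and the bijection is order preserving, $\bar c=\bara\geq\overline{\ax_{min}}$, so $\overline{\ax_{min}}$ is a lower bound, and by uniqueness it is the minimum. Applying property (1) of Definition \ref{kolfequiv} to $a=\ax_{min}$ then yields a biholomorphism $\w_{\ax_{min}}:\basef_{\ax_{min}}\ra\ove\basef_{\overline{\ax_{min}}}$ with $\cc_{\ax_{min}}\cong\w_{\ax_{min}}^*\ove\cc_{\overline{\ax_{min}}}$. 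Because $\w_{\ax_{min}}$ is a biholomorphism, and not merely a finite \'etale cover, this pull-back is in fact an isomorphism of families; that is, the two minimum families coincide up to isomorphism.

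It remains to combine this with Theorem \ref{maindep}, which says precisely that changing $x$, changing $\base$, or replacing $\theta$ by another datum in its $\Aut G\times\auta$-orbit produces an equivalent collection. By the previous step each such change leaves the minimum family unchanged up to isomorphism, giving the first assertion. For the final sentence, recall from \ref{Hurwitz} that $\datan/\Aut G\times\auta=\datan/\Aut G\times\outa$ and that, by Corollary \ref{outout}, this set is in bijection with the topological types; thus the $\Aut G\times\auta$-orbit of $\theta$ is exactly the topological type $[\theta]$, and the family depends only on $[\theta]$.

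I do not expect a serious obstacle, since the substantive work is already contained in Theorems \ref{maindep} and \ref{casocenterless}; the only point needing care is to use ``order preserving'' in the correct direction (that $a\geq\ax_{min}$ implies $\bara\geq\overline{\ax_{min}}$) and to observe that condition (1) of the equivalence, specialized to the minimum, upgrades a pull-back along a biholomorphism to a genuine isomorphism of families.
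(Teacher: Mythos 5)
Your proposal is correct and follows essentially the same route as the paper: the paper likewise combines Theorem \ref{casocenterless} (existence and uniqueness of the minimum) with Theorem \ref{maindep}, observing that an equivalence of collections is order preserving and therefore carries the minimum to the minimum. Your extra care in verifying that the bijection sends $\ax_{min}$ to the minimum of the target collection and that property (1) of Definition \ref{kolfequiv} specializes to a genuine isomorphism of the minimum families just makes explicit what the paper leaves implicit.
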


\section{The abelian case}\label{sec:abelian}

We conclude looking at the special case where the group $G$ is
abelian, the opposite of $G$ being centerless.
\begin{teo}\label{casoabeliano}
If $G$ is abelian, then there exists $\ax\in \coll(x,\base, \theta)$
such that $\basef_{\ax}=\M_{0,n}$.
\end{teo}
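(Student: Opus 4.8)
The plan is to produce, when $G$ is abelian, an extension $a=(H_a,f_a)$ with $H_a=H_X$; since in the construction of Section \ref{sec:famG-curve} the base $\basef_a$ is the pointed cover of $\mon$ determined by $q_{a*}\pi_1(\basef_a,y_a)=H_a$, taking $H_a=H_X=\pi_1(\mon,X)$ gives exactly the identity cover $\basef_a=\mon$. Thus it suffices to extend $f=\theta\circ\chi\meno\colon N_x\tra G$ to a morphism defined on all of $K_x$. Recall from \ref{succesioniesatte} that the split sequence $(*_x)$ realizes $K_x$ as the semidirect product $N_x\rtimes_\eps H_X$, with $\eps\colon H_X\ra\Aut N_x$ the monodromy action \eqref{epsilon}. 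I would then apply Lemma \ref{lemmagruppi} with $N=N_x$, $H=H_X$, and the trivial morphism $\phi\equiv 1\colon H_X\ra G$. Since $\inn_{\phi(h)}=\inn_1=\id_G$, condition \eqref{exmor} collapses to the single requirement that $f=f\circ\eps_h$ for every $h\in H_X$.

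The heart of the argument is therefore to verify this equality, and here abelianness enters decisively. Recall (Section \ref{sec:geometric-bases}) that $\Im\eps\subset\Aut^{**}(N_x)$, so for the geometric basis $\base=\{[\alfa_i]\}_{i=1}^n$ and every $h\in H_X$ the automorphism $\eps_h$ maps each generator to a conjugate of itself, say $\eps_h([\alfa_i])=w_i\,[\alfa_i]\,w_i\meno$ for suitable $w_i\in N_x$. Applying $f$ and using that $G$ is abelian gives $f\bigl(w_i[\alfa_i]w_i\meno\bigr)=f(w_i)\,f([\alfa_i])\,f(w_i)\meno=f([\alfa_i])$. Hence $f\circ\eps_h$ and $f$ agree on the generators $[\alfa_i]$ of $N_x$, and consequently $f\circ\eps_h=f$ for all $h\in H_X$, as required.

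With \eqref{exmor} verified, Lemma \ref{lemmagruppi} yields a morphism $f_a\colon K_x\ra G$ with $f_a\circ i_*=f$ (and $f_a|_{H_X}\equiv 1$), so that $a:=(H_X,f_a)\in\coll(x,\base,\theta)$; since $H_a=H_X$ we obtain $\basef_a=\mon$. I do not expect a serious obstacle in this case: the only point that genuinely requires care is the reduction of \eqref{exmor} to $f=f\circ\eps_h$, which works precisely because $G$ is abelian (for a nonabelian $G$ one must instead produce a nontrivial $\phi$ intertwining $\eps_h$ with inner automorphisms, which is exactly the phenomenon forcing $\basef_a\neq\mon$ in general). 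Everything else is a direct application of the group-theoretic lemmas of Section \ref{sec:groups} together with the identity $\Im\eps\subset\Aut^{**}(N_x)$.
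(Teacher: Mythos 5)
Your proof is correct and follows essentially the same route as the paper: both reduce the statement to condition \eqref{exmor} of Lemma \ref{lemmagruppi} and verify it by noting that each $\eps_h$ sends the generators $[\alfa_i]$ to conjugates of themselves, which combined with the abelianness of $G$ gives $f\circ\eps_h=f$. The only (immaterial) differences are that you justify the conjugacy claim via the previously established inclusion $\Im\eps\subset\Aut^{**}(N_x)$ rather than via Birman's explicit formulas for the pure braid generators, and that you take $\phi\equiv 1$ where the paper allows an arbitrary morphism $\phi:H_X\ra G$.
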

\begin{proof}
Let $N_x$, $K_x$ and $H_X$ be as in \eqref{notax} and consider the
splitting exact sequence $(*_x)$, i.e. the top row of
\eqref{seqpuni}. Let $\chi: \Ga_n \ra N_x$ be the isomorphism
induced from the basis $\base$. Set
$f:= \theta \circ \chi\meno: N_x\ra G $.  Now let $\phi: H_X\ra G$
be any morphism.  Let \begin{gather*} \varepsilon:
\pi_1(\M_{0,n})\ra \Aut(\pi_1(\puno-X, x_0))
\end{gather*}
denote the morphism giving the semidirect product in $(*_x)$. By the
considerations in \ref{succesioniesatte}, $\varepsilon$ is just the
restriction to $\pi_1(\M_{0,n})$ of the morphism $\tilde\varepsilon$
giving the splitting of the exact sequence in the second row of
\eqref{seqpuni}.  In \cite[Corollary 1.8.3]{birman} it is
explicitely described the image via $\tilde \varepsilon$ of the
generators of the pure braid group of $n-1$ strings of the plane. To
be more precise, the notation in \cite{birman} corresponds to
identify
\begin{gather*}
\M_{0,n}\cong\{(x_1,...,x_{n-1})\in \conf_{0,n-1}\C: \ x_1=0,\
x_2=1\}
\end{gather*}
instead of \eqref{biolom}.   By this
description one sees that, for a generator $h$ of
$\pi_1 (\M_{0,n})$, $\varepsilon_h$ sends a generator $\ga_j$ of
$\pi_1(\puno-X, x_0)$ to a conjugate of it.   In the setting of
Lemma \ref{lemmagruppi} we have $f\circ \eps_h (\ga_j ) = f(\ga_j)$
since $G$ is abelian.  Similarly $\inn_{\phi(h)} $ is the identity
since $G$ is abelian.  It follows immediately that there exists
$f_a: K_x \ra G$ extending both $f$ and $\phi$. Thus
$(H_x, f_a)\in \coll(x,\base, \theta)$ and this concludes.
\end{proof}

\begin{say}
The proof of Theorem \ref{casoabeliano} shows that, when $G$ is
abelian, for every morphism $\phi: H_X\ra G$ we can build
$f_\phi: K_x \ra G$ extending both $f$ and $\phi$. We point out that
this is the opposite of the uniqueness result in Theorem
\ref{casocenterless}. Of course,
$(H_X, f_\phi)\in \coll(x,\base, \theta)$ is a minimal element for
$(\coll(x,\base, \theta),\geq)$ since $H_X$ is as big as possible,
i.e.  if $b\in \coll(x, \base, \theta)$ and
$ (H_X , f_\phi) \geq b$, then $H_X= H_b$, so $b=(H_x, f_\phi)$.
But different choises of $\phi$ yield elements in
$\coll(x,\base, \theta)$ that are not comparable with respect to the
order relation $\geq$.
\end{say}

\begin{say}\label{esempio}
An important point to stress is that, in the general case,
$H_a \subsetneq H_X$ and $\basef_a \neq \mon$ for every
$\ax\in \coll(x,\base, \theta)$. We now show this via an easy
example.  As in the proof of Theorem \ref{casoabeliano}, we use the
description in \cite{birman} of image via $\tilde \varepsilon$ of
the generators of the pure braid group of the plane and we show
that, in general, there may not exist any morphism
$\tilde f: \pi_1(\mathbb{P}^1\setminus X, x_0)\rtimes H_X\ra G$
extending $f$. Thus, in this case $H_a \subsetneq H_X$ for any
$a\in \coll$. Let $\theta: \Gamma_4\ra S_3$ be given by
$\theta(\gamma_1)=(12)$, $\theta(\gamma_2)=(23)$,
$\theta(\gamma_3)=(23)$, $\theta(\gamma_4)=(12)$. With the notation
in \cite{birman}, $\pi_1(\M_{0,4})$ is free on the generators
$A_{12}$ and $ A_{13}$. We have
$\theta(\varepsilon(A_{12})\gamma_1)=(23)$,
$\theta(\varepsilon(A_{12})\gamma_2)=(13)$,
$\theta(\varepsilon(A_{12})\gamma_3)=(23)$,
$\theta(\varepsilon(A_{12})\gamma_4)=(12)$. Now note that, on one
side, $\gamma_1\gamma_2\gamma_3\gamma_4=1$ and thus
$\ga_1\ga_2\ga_3\ga_4\in\ker\theta$, but on the other side
$\theta(\varepsilon(A_{12})
(\gamma_1\gamma_2\gamma_3\gamma_4))=(23)(13)=(123)\neq 1$.  With the
notation of Lemma \ref{grupplemma} it follows that
$A_{12} \notin H''$, so $H''\neq H_X$.  It follows from Lemma
\ref{lemmagruppi} that for any $a\in \coll$ we have
$H_a \subset H''$. Thus in particular, $H_a\subset H''\neq
H_X$. Thus there do not exist any morphism
$\tilde f: \pi_1(\mathbb{P}^1\setminus X, x_0)\rtimes H_X\ra G$
extending $f$. Geometrically, one can interpret this fact as
follows. On $\M_{0,4}\cong\cxx$ there is the universal family of
elliptic curves $\mathscr{E}\ra \M_{0,4}$. We denote by
$E_{\lambda}$ the fiber of $\mathscr{E}\ra \M_{0,4}$ over
$\lambda\in \cxx$.  The family corresponding to $\theta$ shows that
every elliptic curve has an effective action of $S_3$, which is
built as follows: $S_3 = \Zeta/3 \rtimes \Zeta /2$, where $\Zeta /2$
is the multiplication by $-1$ on $E$ and $\Zeta/3$ is a subgroup of
the translations $(E,+)$. So to build such an action one has to
choose a line inside $E_\la[3]$.  If an extension
$\tilde f:\pi_1(\M_{0,5}) \rightarrow S_3$ exists, then there is a
family of lines
$l_{\lambda}\subset E_{\lambda}[3]\cong H_1(E_{\lambda}, \Zeta/3)$
defined over $\M_{0,4}$. Equivalently, fixing a base point
$\lambda_0\in \M_{0,4}$, there is a line
$l_{\lambda_0}\subset E_{\lambda_0}$ which is stable under the
action of the monodromy of the family $\mathscr{E}$. But the image
of this monodromy is $\Gamma_2$, the congruence subgroup of level 2,
which fixes no line in $H_1(E_{\lambda_0}, \Zeta/3)$.

\end{say}

\begin{say} It follows from the previous remarks that, in the general
case, $\basa$ cannot be $\M_{0,n}$ itself, but is necessarily a
finite cover of it. As pointed out in the introduction, this
corrects an {inaccuracy} in \cite{ganzdiez}.  There it is claimed
that $\basef=\mon$ always. As $\mon$ is birational to
projective space, the authors concluded that the image of the
family in $\M_g$ is always a unirational variety. By Theorem
\ref{casoabeliano} their proof works for abelian covers, hence the
moduli image of a family of abelian covers is always
unirational. In the general case this argument fails and in fact the result is false. Indeed,
Michael D. Fried informed us that he recently found examples of
families for which the moduli image is not unirational. In his
work in progress \cite{friednew}, Fried considers the moduli space
of Galois covers of the line with fixed datum and fixed Nielsen
class.  When a component of this moduli space is of general type
(i.e. a multiple of its canonical class gives an embedding), then
the component is not unirational. When the
datum is for covers with 4 branch points, and the equivalences
include reduction by the action of M\"obius transformations, there
is an explicit formula for the genus of the components (see
\cite{baileyfried02}), which in this case are one-dimensional and
covers of the $j$-line.  When that genus exceeds 1, these spaces
have general type. For the group $A_n$, $n \equiv 1 \mod 4$, and
the branching type of the covers having all four conjugacy classes
$(n+1)/2$-cycles, Fried has computed the components and their
genuses. For $ n$ large, the genus is a nonconstant multiple of
$n^2$. When the equivalence comes from the degree $ n$ permutation
representation of $A_n$, the base $\basef_a$ of any family in the
collection $\{ \cc_a \ra \basef_a\}_{a\in \coll}$ associated with
the datum, has a natural map to one of these components. Thus its
moduli image cannot be unirational. 

\end{say}


\begin{thebibliography} {99}

\bibitem{baileyfried02} P.~ Bailey, and M.D.~Fried, \newblock Hurwitz monodromy, spin separation and higher levels of a modular tower,
  \newblock In  {\em
Arithmetic fundamental groups and noncommutative algebra (Berkeley, CA, 1999)},
volume 70 of {\em Proc. Sympos. Pure Math.}, pages 79--220. Amer. Math. Soc., Providence, RI,
2002.







\bibitem{birman} J. S.~ Birman.  \newblock {\em Braids, links and
mapping class groups}, . \newblock Ann. Math. Stud. 82, Princeton
University Press, 1974.



%
\bibitem{cgp} D.~Conti, A.C.~Ghigi, R.~Pignatelli.  \newblock Some
evidence for the Coleman-Oort conjecture.  \newblock{\em
Rev. R. Acad. Cienc. Exactas F\'{i}s. Nat. Ser. A Mat. RACSAM},
116(1):50, 2022.




\bibitem{eilesteen} S.~Eilenberg and N.~Steenrod, \newblock {\em
Foundations of Algebraic Topology}, Princeton University Press,
Princeton, 1952.


\bibitem{fadell} E.~Fadell. \newblock {Homotopy groups of
configuration spaces and the string problem of Dirac}, {\em Duke
Mathematical Journal}, vol 29 (1962).



\bibitem{fmarga} B.~Farb and D.~Margalit. \newblock{\em A Primer on
Mapping Class Groups}, Princeton University Press, Princeton and
Oxford, 2012.

%
%
%
\bibitem{fgp} P.~Frediani, A.~Ghigi and M.~Penegini, \newblock
{Shimura varieties in the Torelli locus via Galois coverings}, {\em
Int. Math. Res. Not.}, 2015, no. 20, 10595-10623.




\bibitem{fn} P.~Frediani and F.~Neumann, \newblock \'Etale homotopy
types of moduli stacks of algebraic curves with
symmetries. \newblock {\em K-Theory} 30 (2003), no. 4, 315-340.


\bibitem{fried77} M.D.~Fried, \newblock Fields of definition of
function fields and Hurwitz families - Groups as Galois groups,
\newblock {\em Comm. Alg.}, 5 (1) (1977), 17-82.

\bibitem{fried10} M.D.~Fried, \newblock Alternating groups and moduli space lifting invariants,
\newblock {\em Israel J. Math.}, 179 (2010), 57-125.




\bibitem{friednew} M.D.~Fried, \newblock Modular curve-like towers of spaces attached to Alternating groups, \newblock
\newblock {\em Work in progress}.

\bibitem {friedann} M.D.~Fried and H.~V\"olklein, \newblock The
embedding problem over a Hilbertian PAC-field.  {\em Ann. of Math.}
(2) 135 (1992), no. 3, 469-481.

\bibitem{friedjarden} M.D.~Fried and M.~Jarden, \newblock {\em Field
arithmetic.} \newblock Third edition.  \newblock Springer-Verlag,
Berlin, 2008.


\bibitem{friedv} M.D.~Fried and H.~V\"olklein, \newblock The inverse
Galois problem and rational points on moduli spaces, \newblock {\em
Mathematische Annalen}, 290 (1991), 771-800.


\bibitem{ganzdiez} G.~Gonz{\'a}lez~D{\'{\i}}ez and W.~J. Harvey.
\newblock Moduli of {R}iemann surfaces with symmetry.  \newblock In
{\em Discrete groups and geometry ({B}irmingham, 1991)}, volume 173
of {\em London Math. Soc. Lecture Note Ser.}, pages
75--93. Cambridge Univ. Press, Cambridge, 1992.





\bibitem{SGA} A.~Grothendieck et al. \newblock \emph{Revetements
{\'e}tales et Groupe Fondamental (SGA1)}. \newblock Vol. 224 of
{\em Lecture Notes in Mathematics}, Springer, Berlin (1971).



\bibitem{hall} M.~Hall. \newblock A Topology for Free Groups and
Related Groups. \newblock {\em Annals of Mathematics}, 52(1), second
series, 127-139.


\bibitem {hirsch} M.~Hirsch, \newblock {\em Differential Topology},
\newblock Springer-Verlag, Berlin, 1976.

\bibitem {ivanov} N.V.~Ivanov. \newblock {Mapping class groups}, In
R.J.~Daverman and R.B.~Sher, \newblock {\em Handbook of geometric
topology}, pages {523--633}, {North-Holland, Amsterdam}, {2002}.



\bibitem{kurosh-2} A.~G.~Kurosh, \newblock {\em The {T}heory of
{G}roups}, Second English edition.  \newblock Chelsea, \newblock
1960.


\bibitem{li} B.~Li, \newblock $G$-marked moduli spaces.\newblock {\em
Commun. Contemp. Math.} 20 (2018), no. 6, 1750061.


\bibitem{may} J.P.~May, \newblock {\em A Concise Course in Algebraic
Topology}, \newblock University of Chicago Press, \newblock 1999.





\bibitem{moonen-special} B.~Moonen, \newblock {\em Special
subvarieties arising from families of cyclic covers of the
projective line}, \newblock Doc. Math. 15:793--819, 2010.
%
\bibitem{moonen-oort} B.~Moonen and F.~Oort, \newblock {\em The
{T}orelli locus and special subvarieties}, \newblock In:
``{H}andbook of {M}{oduli: Volume II}'', G. Farkas and I. Morrison
(eds.), 549--94.  International {P}ress, Boston, 2013.
%

\bibitem{penego} M.~Penegini, \newblock Surfaces isogenous to a
product of curves, braid groups and mapping class groups. \newblock
In {\em Beauville surfaces and groups}, volume 123 of {\em Springer
Proc. in Mathe. Stat.}, pages 129-148.


\bibitem{perroni} F.~Perroni, \newblock {Smooth covers of moduli
stacks of Riemann surfaces with symmetries}, \newblock . {\em
Boll. Unione Mat. Ital.} 15 (2022), no. 1-2, 333-342.


\bibitem{Rohde} J.C.~Rohde.  \newblock {\em Cyclic coverings,
Calabi-Yau Manifolds and Complex Multiplication}. \newblock
Vol. 1975 of {\em Lecture Notes in Mathematics,}, Springer, Berlin
(2009).


\bibitem{spanier} E.H.~Spanier.  \newblock {\em {A}lgebraic
{T}opology}.  \newblock McGraw-Hill, New York, 1966.

\bibitem{steenrod} N.~Steenrod, \newblock {\em The Topology of Fibre
Bundles}, Princeton University Press, Princeton, 1951.


\bibitem{t} C.~ Tamborini, \newblock {Symmetric spaces uniformizing
    Shimura varieties in the Torelli locus}, \newblock {\em
    Ann. Mat. Pura Appl.} (4) 201 (2022), no. 5, 2101--2119.




\bibitem{voisin} C.~Voisin.  \newblock {\em Th\'eorie de {H}odge et
g\'eom\'etrie alg\'ebrique complexe}, volume~10 of {\em Cours
Sp\'ecialis\'es}.  \newblock Soci\'et\'e Math\'ematique de France,
Paris, 2002.

\bibitem{volk} H.~V\"olklein, \newblock Moduli spaces for covers of
the Riemann sphere. \newblock {\em Israel J. Math.} 85 (1994),
no. 1-3, 407-430.

\bibitem{volkl} H.~V\"olklein, \newblock {\em Groups as Galois
groups}, Cambridge University Press, Princeton, 1996.



\bibitem{zieschang} H.~Zieschang, E.~Vogt, and H.-D. Coldewey.
\newblock {\em Surfaces and planar discontinuous groups}. Vol. 835
of {\em Lecture Notes in Mathematics}.  \newblock Springer, Berlin,
1980.

\end{thebibliography}
\end{document}